\def\C{{\mathbf C}}
\def\R{{\mathbf R}}
\def\Z{{\mathbf Z}}
\def\Q{{\mathbf Q}}
\def\A{{\mathbf A}}
\newtheorem{theorem}{Theorem}[subsection]
\newtheorem{lemma}[theorem]{Lemma}
\newtheorem{proposition}[theorem]{Proposition}
\newtheorem{corollary}[theorem]{Corollary}
\theoremstyle{definition}
\theoremstyle{remark}
\newtheorem{remark}[theorem]{Remark}
\newcommand{\mm}[4]{\left(\begin{smallmatrix} #1 & #2\\ #3 & #4\end{smallmatrix}\right)}
\DeclareMathOperator{\SO}{SO}
\DeclareMathOperator{\GSp}{GSp}
\DeclareMathOperator{\SU}{SU}
\DeclareMathOperator{\GL}{GL}
\DeclareMathOperator{\GE}{GE}
\DeclareMathOperator{\diag}{diag}
\def\g{{\mathfrak g}}
\def\h{{\mathfrak h}}
\def\k{{\mathfrak k}}
\def\p{{\mathfrak p}}
\def\m{{\mathfrak m}}
\def\sl{{\mathfrak {sl}}}
\def\Wh{{\mathcal W}}
\def\Vm{{\mathbb{V}}}
\begin{document}
\title{The minimal modular form on quaternionic $E_8$}
\author{Aaron Pollack}
\address{Department of Mathematics\\ Duke University\\ Durham, NC USA}
\email{apollack@math.duke.edu}

\begin{abstract} Suppose that $G$ is a simple reductive group over $\Q$, with an exceptional Dynkin type, and with $G(\R)$ quaternionic (in the sense of Gross-Wallach).  In a previous paper, we gave an explicit form of the Fourier expansion of modular forms on $G$ along the unipotent radical of the Heisenberg parabolic.  In this paper, we give the Fourier expansion of the minimal modular form $\theta_{Gan}$ on quaternionic $E_8$, and some applications.  The $Sym^{8}(V_2)$-valued automorphic function $\theta_{Gan}$ is a weight four, level one modular form on $E_8$, which has been studied by Gan.  The applications we give are the construction of special modular forms on quaternionic $E_7, E_6$ and $G_2$.  We also discuss a family of degenerate Heisenberg Eisenstein series on the groups $G$, which may be thought of as an analogue to the quaternionic exceptional groups of the holomorphic Siegel Eisenstein series on the groups $\GSp_{2n}$.\end{abstract}
\maketitle


\setcounter{tocdepth}{1}
\tableofcontents
\section{Introduction} This paper is a sequel to the paper \cite{pollackQDS}.  In \cite{pollackQDS}, we studied ``modular forms'' on the quaternionic exceptional groups, following beautiful work of Gan-Gross-Savin \cite{ganGrossSavin} and Wallach \cite{wallach}.  We proved that these modular forms possess a refined Fourier expansion, similar to the Siegel modular forms on the symplectic groups $\GSp_{2n}$.  This is, in a sense, a purely Archimedean result: The representation theory at the infinite place on the quaternionic exceptional groups forces the modular forms to have a robust theory of the Fourier expansion.

Suppose that $G_{/\Q}$ is a quaternionic exceptional group of adjoint type.  The maximal compact subgroup $K_{\infty}\subseteq G(\R)$ is $(\SU(2) \times L)/\mu_2$ for a certain group $L$.  Denote by $\Vm_n = Sym^{2n}(V_2) \boxtimes \mathbf{1}$ the representation of $K_{\infty} = (\SU_2 \times L)/\mu_2$ that is the $(2n)^{th}$ symmetric power of the defining representation $V_2$ of $\SU(2)$ and the trivial representation of $L$.  Recall from \cite{pollackQDS} that if $n \geq 1$ is an integer, a modular form on $G$ of weight $n$ is a smooth, moderate growth function $F: G(\Q)\backslash G(\A) \rightarrow \Vm_n^\vee$ that satisfies
\begin{enumerate}
\item $F(gk) = k^{-1} \cdot F(g)$ for all $k \in K_{\infty} \subseteq G(\R)$ and
\item $\mathcal{D}_n F = 0$.\end{enumerate}
Here $\mathcal{D}_n$ is a certain first-order differential operator, closely-related to the so-called Schmid operator for the quaternionic discrete series representations on $G(\R)$.  It is that $F$ is annihilated by $\mathcal{D}_n$ that is the crucial piece of the definition of modular forms.

There is a weight four, level one modular form on quaternionic $E_8$ that is associated to the automorphic minimal representation, studied by Gan \cite{ganATM,ganSW}, which we denote by $\theta_{Gan}$.  The automorphic minimal representation is spherical at every finite place, but is not spherical at infinity; at the Archimedean place, it has minimal $K_{\infty}$-type $\Vm_4$.  If $v \in \Vm_4$, then pairing $\theta_{Gan}$ with $v$ gives the vector in the minimal representation that is $v$ at the Archimedean place and spherical at all the finite places.  Our main result is the complete and explicit Fourier expansion of this modular form.  See Theorem \ref{thm:thetaE8}.  Using $\theta_{Gan}$, we construct special modular forms on $E_7, E_6$ and $G_2$; see Corollary \ref{cor1} and Corollary \ref{cor2}.  Moreover, we study a family of absolutely convergent Eisenstein series on the quaternionic exceptional groups, and prove that all their nontrivial Fourier coefficients are Euler products; see Theorem \ref{thm:FEconvEis}.

To set up the statements of these results, let us recall from \cite{pollackQDS} the shape of the Fourier expansion of modular forms on the quaternionic exceptional groups.  Thus suppose $G = G_J$ is a quaternionic exceptional group of adjoint type, associated to a cubic norm structure $J$ over $\Q$ with positive definite trace form.  Then $G$ has a rational Heisenberg parabolic $P_J = H_J N_J$, with Levi subgroup $H_J$ and unipotent radical $N_J$.  The group $N_J \supseteq N_0$ is a two-step unipotent group, with center $N_0 = [N,N]$  and abelianization $N/N_0 \simeq W_J$.  Here $W_J = \Q \oplus J \oplus J^\vee \oplus \Q$ is Freudenthal's defining representation of the group $H_J$ (see \cite{pollackQDS} and \cite{pollackLL}).  

Suppose $F$ is a modular form of weight $n$ for $G$.  Denote by $F_0$ the constant term of $F$ along $N_0$, i.e., \[F_0(g) = \int_{N_0(\Q)\backslash N_0(\A)}{F(ng)\,dn}.\]
A simple argument using the left-invariance of $F$ under $G(\Q)$ proves that $F_0$ determines $F$ for the groups studied in \cite{pollackQDS}.  The Fourier expansion of $F_0$ is then given as follows: For $x \in (N/N_0)(\R) \simeq W_J(\R)$ and $g \in H_J(\R)$,
\[F_0(x g) = F_{00}(g) + \sum_{\omega \in W_J(\Q), \omega \geq 0}{a_F(\omega) e^{2 \pi i \langle \omega, x \rangle} \Wh_{2\pi\omega}(g)}\]
where here the notation is as follows:
\begin{itemize}
\item $F_{00}$ denotes the constant term of $F$ along $N$;
\item $a_F(\omega)$ is the Fourier coefficient associated to $\omega$;
\item $\langle \cdot,\cdot \rangle$ is Freudenthal's symplectic form on $W_J$;
\item $\Wh_{\omega}: H_J(\R) \rightarrow \Vm_n^\vee$ is a special function on $H_J(\R)$ defined in terms of the $K$-Bessel functions $K_v(\cdot)$ for $v \in \{-n,-n+1,\ldots,n-1,n\}$ and the element $\omega \in W_J(\R)$.
\end{itemize}

Denote by $x,y$ the fixed basis of $V_2$ from \cite{pollackQDS} so that $\{x^{2n}, x^{2n-1}y, \ldots, xy^{2n-1}, y^{2n}\}$ is a basis of $\Vm_n$.  The basis elements $x^{n+v}y^{n-v}$ of $\Vm_n$ are essentially characterized by the fact that $k \cdot x^{n+v}y^{n-v} = j(k,i)^v x^{n+v}y^{n-v}$ for $k \in K_H^1$ a certain compact subgroup of $H_J(\R)$ and $j$ the factor of automorphy on $H_J$ specified in \emph{loc cit}; see \cite[section 9]{pollackQDS}.  Then
\[\mathcal{W}_{\omega}(g) = \sum_{-n \leq v \leq n}{\mathcal{W}_{\omega}^v(g) \frac{x^{n+v}y^{n-v}}{(n+v)!(n-v)!}}\]
with 
\[\mathcal{W}_\omega^v(g) = \nu(g)^n |\nu(g)| \left(\frac{|\langle \omega, g r_0(i)\rangle|}{\langle \omega, g r_0(i)\rangle}\right)^v K_v(|\langle \omega, gr_0(i)\rangle|)\]
and $r_0(i) = (1,-i,-1,i) \in W_J \otimes \C$. Here $\nu: H_J \rightarrow \GL_1$ is the similitude character of $H_J$.  Moreover, the constant term
\[F_{00}(g) = \nu(g)^{n}|\nu(g)|\left(\Phi(g) \frac{x^{2n}}{(2n)!} + \beta \frac{x^n y^n}{n! n!} + \Phi'(g) \frac{y^{2n}}{(2n)!}\right)\]
for some holomorphic modular form $\Phi$ of weight $n$ on $H_J$ and $\Phi'(g) = \Phi(gw_0)$ for a specific element $w_0 \in H_J$ that exchanges the upper and lower half-spaces $\mathcal{H}_J^{\pm}$.

With this result recalled, let us now state the Fourier expansion of $\theta_{Gan}$.  Denote by $\Theta_0$ Coxeter's integral octonions \cite[(5.1)]{elkiesGrossIMRN} and $J_0 = H_3(\Theta_0)$ the associated integral lattice in the exceptional cubic norm structure $J$.  The Freudenthal space $W_J$ has a natural integral lattice $W_J(\Z) = \Z \oplus J_0 \oplus J_0^\vee \oplus \Z$.  For $\omega \in W_J(\Z)$, define $\Delta(\omega)$ to be the largest positive integer so that $\omega \in \Delta(\omega)W_J(\Z)$.  For $T \in J_0$, define $\Delta(T)$ analogously.

Recall Kim's modular form $H_{Kim}(Z)$ \cite{kimE7} on the exceptional tube domain, which has Fourier expansion
\[H_{Kim}(Z) = \frac{1}{240} + \sum_{T \in J_0, T \geq 0 \text{  rank one}}{\sigma_3(\Delta(T)) q^{T}}.\]
Denote by $\Phi_{Kim}$ the automorphic form on $H_J=GE_7$ so that $j(g,i)^4 \Phi_{Kim}(g)$ descends to $\mathcal{H}_J^{\pm}$, is holomorphic on $\mathcal{H}_J^{+}$, antiholomorphic on $\mathcal{H}_J^{-}$, and on $\mathcal{H}_J^{+}$ one has $H_{Kim}(Z) = j(g,i)^4 \Phi_{Kim}(g)$ if $Z = g \cdot i$.
\begin{theorem}\label{thm:thetaE8} Let the notations be as above.  Then
\[ \theta_{Gan,0}(xg) = \theta_{Gan,00}(g) + \frac{1}{12} \sum_{\omega \in W_J(\Z) \text{ rank one}}{\sigma_4(\Delta(\omega)) e^{2\pi i\langle \omega, x\rangle}\Wh_{2\pi \omega}(g)}\]
with
\[\theta_{Gan,00}(g) = |\nu(g)|^{5}\left( \frac{\zeta(5)}{\pi^4 2^4} \frac{x^4 y^4}{4!4!} + \frac{2}{3}\left(\Phi_{Kim} \frac{x^8}{8!} + \Phi_{Kim}' \frac{y^{8}}{8!}\right)\right).\]
\end{theorem}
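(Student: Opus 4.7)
The plan is to reduce the computation to explicit local calculations in the automorphic minimal representation $\Pi_{\min}$ of $E_8(\A)$, of which $\theta_{Gan}$ is the image of a preferred vector: spherical at every finite place and $\Vm_4$-isotypic at infinity. With the Fourier expansion framework recalled above from \cite{pollackQDS} in hand, it suffices to compute the Fourier coefficients $a_{\theta_{Gan}}(\omega)$ for $\omega \in W_J(\Q)$ and the constant term $\theta_{Gan,00}$.

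\textbf{Step 1: vanishing off the minimal orbit.} A characterizing property of $\Pi_{\min}$ is that its $N$-Fourier coefficients are supported on the minimal (rank one) nilpotent orbit in $W_J$; this kills every term in the Fourier sum with $\mathrm{rk}(\omega) \geq 2$ and restricts the sum to the rank one locus.

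\textbf{Step 2: rank one coefficients.} For rank one $\omega \in W_J(\Z)$ the Fourier coefficient $a_{\theta_{Gan}}(\omega)$ factors as a product of local coefficients. At each finite prime $p$, the explicit Schr\"odinger-type model of the local minimal representation from Gan's work \cite{ganATM,ganSW} yields a formula for the local Fourier coefficient of the spherical vector. Multiplying across primes, and using that $\Delta(\omega)$ records the $H_J(\widehat{\Z})$-orbit invariant of a rank one $\omega$, produces the divisor sum $\sigma_4(\Delta(\omega))$ via the identity $\sum_n \sigma_4(n) n^{-s} = \zeta(s)\zeta(s-4)$. At the archimedean place the Fourier coefficient of the $\Vm_4$-vector is pinned down by matching against the Schmid-type equation $\mathcal{D}_4 F = 0$, which forces it to equal $\Wh_{2\pi\omega}(g)$ up to the global scalar.

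\textbf{Step 3: constant term.} By the general shape of $F_{00}$ recalled above, $\theta_{Gan,00}$ is determined by two weight-four holomorphic modular forms on $H_J = GE_7$ together with one middle $x^4y^4$-coefficient. For the holomorphic pieces: the constant term of $\theta_{Gan}$ along $N$ lies in the $N$-Jacquet module of $\Pi_{\min}$, which contains the automorphic minimal representation of $H_J$; the only weight-four realization of that representation is a scalar multiple of Kim's form $\Phi_{Kim}$, and the scalar is pinned down by comparing one Fourier coefficient. The middle coefficient is obtained from the double constant term $\theta_{Gan,000}$ of $\theta_{Gan}$ along the full Heisenberg unipotent $N$; this is a function on $H_J$ in a degenerate induced representation whose $L$-invariant part can be evaluated by an unramified local zeta integral producing the factor $\zeta(5)/(\pi^4 2^4)$.

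\textbf{Main obstacle.} The structural identifications above follow from the theory of \cite{pollackQDS} together with the rank-one vanishing of $\Pi_{\min}$, so the serious work is in pinning down the three numerical constants $\tfrac{1}{12}$, $\tfrac{2}{3}$, and $\tfrac{\zeta(5)}{\pi^4 2^4}$. Getting them right amounts to reconciling three normalizations simultaneously: of $\theta_{Gan}$ itself, of $\Wh_\omega$ (which involves specific $K$-Bessel functions), and of $\Phi_{Kim}$. I would pin these down by computing one especially tractable Fourier coefficient --- e.g.\ the one attached to a primitive rank one $\omega$ with $\Delta(\omega) = 1$ --- in two different ways, solving for the overall scale, and then propagating it to the constant term via the identification of $\Phi_{Kim}$ and the local zeta integral.
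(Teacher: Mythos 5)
Your Steps 1--3 reproduce facts that are essentially already known and are recalled in the paper (minimality kills rank $\geq 2$ coefficients; the rank one coefficients follow the pattern $\sigma_4(\Delta(\omega))$ up to a global scalar by \cite{ganRegularized}, \cite{kazhdanPolishchuk}; the holomorphic part of the constant term is proportional to $\Phi_{Kim}$ by \cite{ganSW}, \cite{kimE7}). The actual content of the theorem is the determination of the three constants $\tfrac{1}{12}$, $\tfrac{2}{3}$, $\tfrac{\zeta(5)}{\pi^4 2^4}$ relative to one another, and here your proposal has a genuine gap: local computations in the minimal representation cannot, by themselves, fix the relative normalization between the constant term and the nonconstant coefficients, since every scalar multiple of $\theta_{Gan}$ has identical local models. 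Your "main obstacle" paragraph proposes to compute one tractable Fourier coefficient "in two different ways," but never says what the second way is --- and that second way is precisely the missing mechanism. Likewise, your evaluation of the middle $x^4y^4$ term by "an unramified local zeta integral" presupposes an explicit realization of $\theta_{Gan}$ inside a degenerate induced representation, which you never establish.

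The paper supplies exactly this missing global input. It first proves (Proposition \ref{prop:EisReg1}, via the vanishing at $s=5$ of the archimedean intertwiner $M(w_0,s)$ and the functional equation) that $\theta_{Gan}$ is, up to a nonzero scalar, the \emph{value} $E_J(g,s=5;4)$ of the weight-$4$ Heisenberg Eisenstein series; this is what makes the constant term computable, giving the $\zeta(5)$ term from $f_0$ and the Kim form (with its constant $240$) from $f_1^0$. The remaining problem is that $s=5$ lies far outside the range of absolute convergence on $E_8$, so the rank one coefficients cannot be obtained by unfolding there --- your Step 2 silently assumes they can. The paper circumvents this by passing to $G_2$: for $J=\Q$ the point $s=5=n+1$ \emph{is} in the range of absolute convergence, Theorem \ref{thm:FEconvEis} and Corollary \ref{cor:eisConFC1} compute the rank one coefficients of $E^{G_2}(g,5;4)$ directly, and Gan's Siegel--Weil theorem (in the weak form that $E^{G_2}(g,5;4)-\theta(\mathbf{1})$ is cuspidal) together with Lemma \ref{lem:a1sEq} (that $\Omega_I(\omega_0)$ and $\Omega_E(\omega_0)$ are singletons) transfers these back to pin down the rank one coefficients of $\theta_{Gan}$ relative to its constant term. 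Without this detour, or some equally explicit substitute, your outline does not determine the constants that the theorem asserts.
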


There is a degenerate Heisenberg Eisenstein series on each of the quaternionic exceptional groups, which we write as $E(g,s;n)$.  This is a function $E(g,s;n)=E^{G}(g,s;n): G(\Q)\backslash G(\A) \rightarrow \Vm_n^\vee$ (depending on $s$) satisfying $E(gk,s;n) = k^{-1} E(g,s;n)$ for all $k \in K_\infty$.  When $G$ is quaternionic $E_8$ and $n=4$, it turns out that the Eisenstein series $E(g,s;4)$ is regular at $s=5$, and $\theta_{Gan}$ is defined \cite{ganATM} (up to a nonzero scalar multiple) as the value of this Eisenstein series at this point.  

Combining the archimedian results of \cite{pollackQDS} with work of Gan \cite{ganATM,ganSW,ganRegularized}, Kim \cite{kimE7}, Gross-Wallach \cite{grossWallach2}, Kazhdan-Polishchuck \cite{kazhdanPolishchuk} and Gan-Savin \cite{ganSavin}, most of Theorem \ref{thm:thetaE8} was known.  See the discussion in subsection \ref{subsec:atm}.  What is left is to pin down a couple constants.  We do this by analyzing the Fourier expansion $E^{G_2}(g,s=5;4)$ of the weight $4$ Eisenstein series on $G_2$, and applying the Siegel-Weil theorem of \cite{ganSW} which relates $\theta_{Gan}$ to this Eisenstein series on $G_2$.  

The Eisenstein series $E^{G_2}(g,s=5;4)$ is easier to compute with in relation to $E^{E_8}(g,s=5;4)$ (which defines $\theta_{Gan}$) because $s=5$ is in the range of absolute convergence of $E^{G_2}(g,s;4)$ but not of $E^{E_8}(g,s;4)$.  In fact, we study absolutely convergent degenerate Heisenberg Eisenstein series $E^{G}(g,s;n)$ on the quaternionic exceptional groups $G$ in general, which is our second main result.  More precisely, if $n$ is even and the special point $s=n+1$ is in the range of absolute convergence, the Eisenstein series $E(g,s=n+1;n)$ is a modular form on $G$ of weight $n$.   We prove that at such an $n$, all of the nontrivial Fourier coefficients of $E(g,s=n+1;n)$ are Euler products.  This is the analogue to the exceptional groups of the corresponding classical fact about holomorphic Siegel Eisenstein series on symplectic groups \cite{siegel}; we defer a precise statement of this result to section \ref{sec:FEeis}.  The proof is an easy consequence of a weak form of the main result of \cite{pollackQDS}: The Fourier expansion of $E(g,s;n)$ has many terms, some of which are Euler products and some of which are not.  However, applying \cite{pollackQDS}, one can deduce that all of the terms that are not Euler products vanish at $s=n+1$ for purely Archimedean reasons.

We also give a few applications of Theorem \ref{thm:thetaE8}, to modular forms on $E_7$, $E_6$ and $G_2$.  Namely, one can pull back the minimal modular form $\theta_{Gan}$ on $E_{8}$ to the simply-connected quaternionic $E_7$ and $E_6$.  Denote these pull-backs by $\theta^{(2)}_{E_7}$ and $\theta^{(4)}_{E_6}$, respectively.  These pull-backs give interesting singular and distinguished modular forms on $E_7^{sc}$ and $E_6^{sc}$.  The modular form $\theta^{(2)}_{E_7}$ is singular in that it has no rank three or rank four Fourier coefficients, but it does have nonzero rank two Fourier coefficients.  The modular form $\theta^{(4)}_{E_6}$ is not singular--it has nonzero rank four Fourier coefficients.  However, it is distinguished in that it has only one orbit of nonzero rank four Fourier coefficients.

\begin{corollary}\label{cor1} The automorphic functions $\theta^{(2)}_{E_7}$ and $\theta^{(4)}_{E_6}$ define nonzero modular forms on $E_7^{sc}$ and $E_6^{sc}$ of weight $4$.  Moreover
\begin{enumerate}
\item The modular form $\theta^{(2)}_{E_7}$ has nonzero rank two Fourier coefficients, but all of its rank three and rank four Fourier coefficients are $0$;
\item The modular form $\theta^{(4)}_{E_6}$ is distinguished: it has only one orbit of nonzero rank four Fourier coefficients. \end{enumerate}
\end{corollary}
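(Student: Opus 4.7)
The plan is to compute the Fourier expansions of the restrictions $\theta^{(2)}_{E_7} = \theta_{Gan}|_{E_7^{sc}(\A)}$ and $\theta^{(4)}_{E_6} = \theta_{Gan}|_{E_6^{sc}(\A)}$ along their Heisenberg parabolics, using Theorem \ref{thm:thetaE8}. First, I would verify that these restrictions are weight four modular forms: the $K_\infty$-equivariance is immediate because the $\SU(2)$ factor of the maximal compact subgroup is common to $E_8$, $E_7^{sc}$, and $E_6^{sc}$, and annihilation by $\mathcal{D}_4$ descends from $E_8$ to each subgroup since the relevant first-order operator on the smaller group is the restriction of the $E_8$ operator to the appropriate piece of the Lie algebra, as should follow from the construction in \cite{pollackQDS}.

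The Heisenberg parabolics for $E_7^{sc}$ and $E_6^{sc}$ sit inside that of $E_8$ compatibly, yielding inclusions $W_{J_{E_7}}, W_{J_{E_6}} \hookrightarrow W_J$ of Freudenthal spaces that respect the symplectic form (here $J = H_3(\Theta)$ and $J_{E_7}, J_{E_6}$ are the cubic norm structures associated to the respective subgroups). A key compatibility is that $r_0(i)$ lies in $W_{J_{E_6}} \otimes \C$, so that for $g$ in the Heisenberg Levi of the smaller group, $\Wh_{2\pi\omega}(g)$ depends on $\omega \in W_J$ only through its symplectic projection to the smaller space. Unfolding the expansion of Theorem \ref{thm:thetaE8}, the Fourier coefficient of $\theta^{(2)}_{E_7}$ at $\omega' \in W_{J_{E_7}}(\Z)$ becomes $\frac{1}{12}\Wh_{2\pi\omega'}(g) \sum \sigma_4(\Delta(\omega))$, summed over rank one $\omega \in W_J(\Z)$ projecting to $\omega'$; and analogously for $\theta^{(4)}_{E_6}$. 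Since $\sigma_4$ is positive, nonvanishing of a Fourier coefficient reduces to the existence of at least one rank one lift.

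The corollary thus reduces to three algebraic claims about Freudenthal's quartic form on $W_J$ and the symplectic projections $W_J \to W_{J_{E_7}}$ and $W_J \to W_{J_{E_6}}$. First, if $\omega \in W_J$ has rank one, then its projection to $W_{J_{E_7}}$ has rank at most two; this is the singularity statement for $\theta^{(2)}_{E_7}$ and should follow from vanishing of the rank three and rank four invariants on the image of the rank one locus. Second, one exhibits explicit rank one $\omega \in W_J(\Z)$ projecting to rank two elements of $W_{J_{E_7}}(\Z)$ and to rank four elements of $W_{J_{E_6}}(\Z)$, giving the nonvanishing claims. Third, all rank four elements of $W_{J_{E_6}}(\Z)$ arising as projections of rank one $\omega \in W_J(\Z)$ lie in a single $E_6^{sc}(\Q)$-orbit, which yields the distinguishedness of $\theta^{(4)}_{E_6}$.

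The main obstacle is the third claim. Using that $H_J(\Q)$ acts transitively on the rank one locus of $W_J(\Q)$, the rank four projections form a single $H_J(\Q)$-orbit in $W_{J_{E_6}}(\Q)$; reducing to a single $E_6^{sc}(\Q)$-orbit requires analyzing the intersection of the stabilizer of a rank one element of $W_J(\Q)$ with $H_{J_{E_6}}(\Q) \subset H_J(\Q)$, and showing this stabilizer acts transitively on the appropriate fiber over the chosen rank four element. This amounts to a rational orbit computation inside the cubic norm structure $J_{E_6}$, which should be tractable but requires care. The rank bound in the first claim is more mechanical but still demands explicit manipulation of rank one elements and the quartic invariant on $W_{J_{E_7}}$.
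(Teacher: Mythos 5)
Your overall strategy is the same as the paper's: pull $\theta_{Gan}$ back along the embeddings, check $\mathcal{D}_4$-annihilation descends, use a symplectic-orthogonal decomposition $W_J = W_{J'} \oplus U$ to write the $\omega'$-Fourier coefficient of the pullback as $\sum_{u} a_{\theta_{Gan}}(\omega'+u)$, and exploit the positivity of $\sigma_4$ so that everything reduces to the rank behavior of rank one elements of $W_J$ under the projection. The difference is in how the three algebraic claims are discharged. The paper does not prove them ab initio: the rank bound for the $E_7$ case and the existence of rank two coefficients are quoted from \cite[Theorem 8.1.4]{pollackLL} (in the Cayley--Dickson setup $W_{J_\Theta}\simeq W_{J_B}\oplus W_6\otimes B$ with $B$ definite), and the distinguishedness for $E_6$ is quoted from \cite[Theorem 7.3.1(1)]{pollackLL} (second Tits construction, $W_J \simeq W_{J_K}\oplus M_3(K)^2$), with the nonvanishing of a rank four coefficient settled by the explicit rank one lift $\omega+\eta=(0,(1,1),(0,\kappa/2),\kappa^2/4)$ of $\omega=(0,1,0,\kappa^2/4)$. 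If you are not allowed to cite those results, your proposal leaves precisely these points as open claims, so as written it is a skeleton rather than a proof.

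The genuine gap is in your third claim. The assertion that ``the rank four projections form a single $H_J(\Q)$-orbit in $W_{J_{E_6}}(\Q)$'' does not make sense: $H_J(\Q)$ does not preserve the subspace $W_{J_{E_6}}$, so transitivity of $H_J(\Q)$ on the rank one locus of $W_J$ gives no orbit statement downstairs, and the subsequent stabilizer-fiber analysis is not set up correctly. What the paper actually proves (Proposition \ref{prop:theta4}) is an invariant-theoretic constraint: if $\omega\in W_{J_K}$ is rank four and admits a rank one lift $\omega+\eta$, then the quartic invariant satisfies $q(\omega)=\kappa^2$ with $\kappa\in K^\times$, $\kappa^*=-\kappa$; this is exactly the content of \cite[Theorem 7.3.1(1)]{pollackLL}, and it is how ``one orbit'' should be interpreted. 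So to complete your argument you would need to prove this square-class constraint (or an equivalent orbit classification of rank four elements of $W_{J_K}$ by $q$ modulo the relevant group), which is the real work; your first claim (rank one in $W_J$ projects to rank $\le 2$ in $W_{J_B}$) likewise needs an actual computation with the quartic form and $\Phi_{x,y}$, not just the assertion that the invariants ``should vanish.'' Two smaller omissions: the embeddings themselves (the paper constructs $E_B$ and $E_K$ as connected centralizers and verifies they are the simply connected groups, and notes that the results of \cite{pollackQDS}, proved for adjoint groups, apply because the center acts trivially on $\Vm_n$ and $E^{sc}(\R)\to E^{ad}(\R)$ is surjective), and the finiteness of the set of lifts contributing to each coefficient.
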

The distinguished nature of these Fourier expansions is a more-or-less immediate consequence of the results of \cite[section 7 and 8]{pollackLL}.

Note that theorem \ref{thm:thetaE8} says that (a scalar multiple of) $\theta_{Gan}$ just fails to have integral Fourier coefficients.  All the rank one Fourier coefficients are integers, and all the Fourier coefficients of $\Phi_{Kim}$ are integers.  Thus, it is reasonable to ask for a nonzero modular form on an exceptional group for which \emph{all} of its Fourier coefficients are integers.  Our final application of Theorem \ref{thm:thetaE8} is to produce such a modular form on $G_2$.  

Following \cite{elkiesGrossIMRN,ganGross,ganGrossSavin}, there is a unique (up to scaling) automorphic function $\epsilon$ on a certain anisotropic form of $F_4$, that is right invariant under $F_4(\widehat{\Z}) F_4(\R)$ and orthogonal to the constant functions.  Denote by $F_{\Delta}$ the theta lift of $\epsilon$ to $G_2$ via $\theta_{Gan}$.  The modular form $F_{\Delta}$ is discussed in \cite{ganGrossSavin} and \cite{ganGross}.  The following is an essentially immediate corollary of Theorem \ref{thm:thetaE8} and results of \emph{loc cit}.
\begin{corollary}\label{cor2} The modular form $F_{\Delta}$ has rational Fourier coefficients with bounded denominators.  Its constant term is proportional to Ramanujan's function $\Delta$. \end{corollary}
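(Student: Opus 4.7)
The plan is to apply the Fourier expansion of $\theta_{Gan}$ given by Theorem \ref{thm:thetaE8} directly to the theta integral
\[
F_\Delta(g) = \int_{F_4(\Q)\backslash F_4(\A)} \theta_{Gan}(g,h)\,\epsilon(h)\,dh,
\]
using that the anisotropic form of $F_4$ has finite class number and that $\epsilon$, being $F_4(\widehat{\Z})F_4(\R)$-invariant, takes only finitely many rational values on a set of double-coset representatives $h_1,\ldots,h_r$ (see \cite{elkiesGrossIMRN}). Unfolding the rank-one Fourier coefficient $a_{F_\Delta}(\omega)$ along the unipotent radical of the Heisenberg parabolic of $G_2$ then produces a finite orbit sum
\[
a_{F_\Delta}(\omega) = \sum_{i=1}^{r} v_i\,\epsilon(h_i)\,a_{\theta_{Gan}}(h_i\cdot\omega),
\]
where the $v_i \in \Q$ are rational volume factors and each $h_i\cdot\omega$ is a rank-one element of the integral Freudenthal lattice $W_J(\Z)$ attached to the exceptional Jordan algebra. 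By Theorem \ref{thm:thetaE8} one has $a_{\theta_{Gan}}(h_i\cdot\omega) = \frac{1}{12}\sigma_4(\Delta(h_i\cdot\omega)) \in \frac{1}{12}\Z$, so every $a_{F_\Delta}(\omega)$ lies in a fixed fractional ideal of $\Q$ independent of $\omega$. This settles the first assertion.

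For the constant term, the same unfolding applied to $\theta_{Gan,00}$ decomposes $F_{\Delta,00}$ into three pieces via Theorem \ref{thm:thetaE8}. The scalar $\zeta(5)/(\pi^4 2^4)$ piece contributes a multiple of $\int_{F_4(\Q)\backslash F_4(\A)}\epsilon(h)\,dh$, which vanishes because $\epsilon$ is orthogonal to the constants by construction. The two holomorphic pieces reduce, after pairing the restriction of $\Phi_{Kim}$ from $\GE_7$ to the $\GL_2$-factor of the Heisenberg Levi of $G_2$ against $\epsilon$ over $F_4$, to a holomorphic weight-$12$ modular form $\Phi$ on $\SL_2(\Z)$. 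Orthogonality of $\epsilon$ to the constants again annihilates the Eisenstein part of $\Phi_{Kim}$, so $\Phi$ is a cusp form; since $S_{12}(\SL_2(\Z)) = \C\cdot\Delta$, it follows that $F_{\Delta,00}$ is a scalar multiple of Ramanujan's $\Delta$. The nonvanishing of this scalar is the nontriviality of $F_\Delta$ established in \cite{ganGross,ganGrossSavin}.

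The main obstacle is really just the bookkeeping linking the two Heisenberg parabolics under the $(G_2,F_4)\subseteq E_8$ dual-pair embedding: one must verify that the restriction of the $E_8$ Heisenberg parabolic to $G_2\times F_4$ intersects $G_2$ in its Heisenberg parabolic in a way compatible with both the constant term along $N_0$ and the rank-one Fourier coefficients, and one must track the rational volume factors $v_i$. Once this is in hand, both assertions follow directly from Theorem \ref{thm:thetaE8} together with the rationality and orthogonality properties of $\epsilon$.
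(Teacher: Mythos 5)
Your argument is essentially the paper's: you unfold the theta integral over the two classes of $F_4^{an}(\Q)\backslash F_4^{an}(\A)/F_4^{an}(\widehat{\Z})F_4^{an}(\R)$, so each Fourier coefficient of $F_\Delta$ is a finite rational combination of the rank-one coefficients $\tfrac{1}{12}\sigma_4(\Delta(\omega))$ of $\theta_{Gan}$ (bounded denominators), and the $\zeta(5)$-term of the constant term dies because $\epsilon$ is orthogonal to constants --- exactly the two points the paper makes (your displayed formula should really carry an inner sum over the rank-one lattice vectors projecting to $\omega_0$, as in the sets $\Omega_I(\omega_0)$, $\Omega_E(\omega_0)$, but this does not affect the conclusion). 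The only substantive difference is at the last step: where the paper simply quotes the Elkies--Gross identity $\int_{[F_4^{an}]}\Phi_{Kim}((g,h))\epsilon(h)\,dh = 3\Phi_\Delta(g)$, you re-derive the proportionality to $\Delta$ by observing that the pairing is a holomorphic level-one form of classical weight $12$ whose constant term is killed by orthogonality (here one should note that positive-definiteness of the trace form forces the $q^0$-coefficient of the restriction of $H_{Kim}$ to be the same constant on both classes), hence a cusp form in $S_{12}(\SL_2(\Z)) = \C\cdot\Delta$ --- which is precisely the argument underlying the cited result, so the two proofs coincide in substance.
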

 
\subsection{Acknowledgments} We thank Wee Teck Gan and Benedict Gross for their encouragement and helpful comments.

\subsection{Notation} Throughout the paper, the notation is as in \cite{pollackQDS}.  In particular, $F$ denotes a field of characteristic $0$, $J$ denotes a cubic norm structure over $F$, and $\g_J$ or $\g(J)$ the Lie algebra associated to $J$ in \cite[Section 4]{pollackQDS}. The field $F$ will frequently be $\Q$ or $\R$.  We will assume that $J$ is either $F$, or $H_3(C)$ with $C$ a composition algebra over $F$.  Thus $\g(J)$ is of type $G_2, F_4, E_6, E_7,$ or $E_8$. 

The letter $K$ or $K_\infty$ denotes the maximal compact subgroup of $G_J(\R)$ defined in \emph{loc cit}, where $G=G_J$ is the adjoint group associated to the Lie algebra $\g(J)$.  We will sometime refer to elements of $\g(J)$ is the notation of the $\Z/3$-model, and other times refer to elements of this Lie algebra in the $\Z/2$-model.  Again, see \cite[section 4]{pollackQDS}.  We write $\Vm_n$ for the representation of $K = (\SU(2) \times L)/\mu_2$ on $Sym^{2n}(V_2) \boxtimes \mathbf{1}$.  \emph{Modular forms} on $G_J$ are by definition certain functions $F: G_J(\Q)\backslash G_J(\A) \rightarrow \Vm_n^\vee$ satisfying $F(gk) = k^{-1} \cdot F(g)$ for all $g \in G_J(\A)$ and $k \in K$, which are annihilated by a first-order differential operator $\mathcal{D}_n$.

One defines $P_J = H_J N_J$ (or $P = HN$, if $J$ is fixed) to be the Heisenberg parabolic of $G_J$, which by definition is the stabilizer of the line $F E_{13}$ in $\g(J)$. We write $N_0 = [N,N]$, which is also the center of $N$.  The letter $\nu$ denotes the similitude character of $P$; one has $\nu: P \rightarrow \GL_1$ given by $p \cdot E_{13} = \nu(p) E_{13}$.

We write $\h(J)$ for the Lie algebra of the Freudenthal group $H_J$ and $\m(J)$ for the Lie algebra of the group that preserves the cubic norm on $J$ up to similitude.  Then (see \cite[section 4]{pollackQDS} for our normalizations)
\begin{align*} \g(J) &= \sl_2 \oplus \h(J)^0 \oplus V_2 \otimes W_J \\ & \simeq \sl_3 \oplus \m(J)^0 \oplus V_3 \otimes J \oplus (V_3 \otimes J)^\vee.\end{align*}
For $w_1, w_2 \in W_J$ we denote by $\Phi_{w_1,w_2}$ the element of $\h(J)^0$ specified in \cite[section 3.4.2]{pollackQDS}.

Finally, if $z \in \C$ and $j \geq 0$ an integer, $(z)_j = z(z+1)(z+2) \cdots (z+j-1) = \frac{\Gamma(z+j)}{\Gamma(z)}$ is the Pochhammer symbol.

\section{Statement of results, and applications}\label{sec:statements} In this section we state our main results more precisely, and give the proofs of Corollary \ref{cor1} and Corollary \ref{cor2}.  We begin by defining the degenerate Heisenberg Eisenstein series on the quaternionic groups $G_J$, as these Eisenstein series are central to everything that follows.  We then review what was known about the automorphic form $\theta_{Gan}$.  Finally, we restate Corollary \ref{cor1} and \ref{cor2} and give the proofs of these results.

\subsection{The degenerate Heisenberg Eisenstein series} In this subsection, we define the degenerate Heisenberg Eisenstein series $E_J(g,s;n)$ on the quaternionic exceptional groups $G_J$. By definition, such an Eisenstein series is associated to a section $f(g,s) \in Ind_{P(\A)}^{G_J(\A)}(|\nu|^{s})$.  More precisely, we use the final parameter $n$ in $E(g,s;n)$ to indicate that $E(g,s;n)$ is $\Vm_n^\vee$-valued and satisfies $E(gk,s;n) = k^{-1} \cdot E(g,s;n)$.  Throughout, \emph{we will assume that $n \geq 0$ is even}.

We now construct such an Eisenstein series explicitly; this makes it easier to do computations. Suppose $\Phi_f$ is a Schwartz-Bruhat function on $\g_J(\A_f)$.  We will define a $\Vm_n$-valued Schwartz function $\Phi_{\infty,n}$ on $\g_J(\R)$ satisfying $\Phi_{\infty,n}(k v) = k \cdot \Phi_{\infty,n}(v)$ momentarily.  With this definition, we set $\Phi = \Phi_f \otimes \Phi_{\infty,n}$ and then
\[f(g,\Phi,s) = \int_{\GL_1(\A)}{|t|^{s}\Phi(t g^{-1} E_{13})\,dt}.\]
It is clear that $f(g,\Phi,s)$ is a section in the induced representation $Ind_{P_J}^{G_J}(|\nu|^{s})$, and we set
\[E(g,\Phi,s) = \sum_{\gamma \in P_J(\Q)\backslash G_J(\Q)}{f(\gamma g, \Phi,s)}.\]
We will be interested in this Eisenstein series at the special value $s=n+1$.  When $n > \dim(W_J)/2 = \dim J + 1$, the Eisenstein series converges absolutely at $s=n+1$ and defines a modular form there; see the remarks after Corollary 1.2.4 in \cite{pollackQDS}.

The special archimedean function $\Phi_{\infty,n}$ is defined as follows.  Denote by $\k_2$ the $\mathfrak{su}_2$ part of $\k$, the Lie algebra of the maximal compact $K$.  Denote by $pr: \g(J) \rightarrow \k_2$ the $K$-equivariant projection. For $n \geq 0$, define $\Phi_{\infty, n}(v) = pr(v)^n e^{-\pi ||v||^2}$.  Here $||v||^2 = B_{\g}(v,-\Theta(v))$, with $B_\g$ and $\Theta$ defined in \cite[section 4]{pollackLL}. It is clear that $\Phi_{\infty,n}(k v) = k\cdot \Phi_{\infty,n}(v)$.

\subsection{The minimal automorphic forms on quaternionic $E_8$}\label{subsec:atm}  In this subsection, we briefly discuss the automorphic miniminal representation on quaternionc $E_8$.  The reader should see \cite{ganATM} and \cite{ganSavin} and the references contained therein for more details.

For this subsection, let $J = H_3(\Theta)$ with $\Theta$ the octonion algebra over $\Q$ whose trace pairing is positive definite.  Then $G_J$ is the quaterionic $E_8$.  Suppose that $f_s \in Ind_{P_J(\A)}^{G_J(\A)}(|\nu|^{s})$ is a flat section, and $E_J(g,f_s)$ the associated Eisenstein series.  It is proved in \cite{ganATM} that for appropriate $f_s$, $E_J(g,f_s)$ has a simple pole at $s=24$.  Moreover, this pole can be achieved when $f_s$ is spherical at every finite place.  The automorphic minimal representation $\Pi$ is defined \cite{ganATM} to be the space of residues of the $E_J(g,f_s)$ at $s=24$.  By e.g. \cite{magaardSavin} and also \cite{ganSavin}, the space of such automorphic forms only have rank $1$ and rank $0$ Fourier coefficients along $N_J$; for instance, this follows by the analogous local fact for one finite place.

Denote by $E_J(g,s)$ the Eisenstein series associated to the flat section $f_J(g,s;n)$ which has the following properties:
\begin{enumerate}
\item $f_J(g,s;n)$ is valued in $\Vm_n^\vee \simeq \Vm_n$, and satisfies $f_J(gk,s;n) = k^{-1} f_J(g,s;n)$ for all $g \in G_J(\A)$ and $k \in K \subseteq G_J(\R)$;
\item $f_J$ is spherical at every finite place;
\item $f_J(1,s;n) = \frac{x^n y^n}{n!n!} \in \Vm_n$.\end{enumerate}
One defines $\theta_{Gan}$ to be a certain nonzero multiple of $Res_{s=24}(E_J(g,s;4))$.  It is proved in \cite{ganSavin} by a somewhat indirect method that $\theta_{Gan}$ is nonzero, i.e. that $E_J(g,s;4)$ does have a nontrivial pole at $s=24$.  Below we will give a direct proof of this.  More precisely, we shall use the following fact.
\begin{proposition}\label{prop:EisReg1} The Eisenstein series $E_J(g,s;4)$ is regular at $s=5$, and defines a modular form on $G_J$ of weight $4$.  Up to nonzero scalar multiples, 
\[E_J(g,s=5;4) = Res_{s=24} E_J(g,s;4) = \theta_{Gan}.\]
\end{proposition}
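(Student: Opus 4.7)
The plan is to deduce all three assertions---regularity at $s=5$, modularity, and the identity $E_J(g,s=5;4)=c\cdot \mathrm{Res}_{s=24}E_J(g,s;4)=c'\cdot \theta_{Gan}$---simultaneously by exploiting the functional equation of the degenerate Heisenberg Eisenstein series together with Gan's result that $E_J(g,s;4)$ has a simple pole at $s=24$. On quaternionic $E_8$ one has $\dim N_J = 57$, and from the grading of the similitude action on $N_J$ one reads off a functional equation $s \leftrightarrow 29-s$ for the degenerate principal series $\mathrm{Ind}_{P_J(\A)}^{G_J(\A)}(|\nu|^s)$, so that $s=5$ and $s=24$ are mirror points. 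The global intertwining operator $M(s)$ sends the distinguished section $f_J(\cdot,s;4)$ to a scalar multiple $c(s)\,f_J(\cdot,29-s;4)$, where $c(s)=c_\infty(s)\prod_p c_p(s)$: the finite factor is an explicit ratio of Riemann zeta values from the Gindikin--Karpelevich calculation, and $c_\infty(s)$ is a ratio of Gamma functions that I would compute by applying $M_\infty(s)$ directly to the vector $f_J(1,s;4)=x^4y^4/(4!4!)$ inside the one-dimensional $\Vm_4^\vee$-isotype of $\mathrm{Ind}_{P_J(\R)}^{G_J(\R)}(|\nu|^s)$.

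I would then analyze $c(s)$ at the two mirror points. The pole of $E_J(g,s;4)$ at $s=24$ is produced by the zeta ratio in $\prod_p c_p(s)$, while at $s=5$ I expect the archimedean Gamma factors in $c_\infty(s)$ to contribute a simple zero of matching order. In the global functional equation $M(s)E_J(g,s;4)=c(s)E_J(g,29-s;4)$, this zero at $s=5$ precisely cancels the pole of $E_J(g,29-s;4)$ at the same point (i.e., the pole at $s=24$), and the resulting finite value is a nonzero scalar multiple of $\mathrm{Res}_{s=24}E_J(g,s;4)=\theta_{Gan}$. Regularity at $s=5$ and the two claimed equalities follow at once. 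Modularity is then automatic: the archimedean section $\Phi_{\infty,4}$ is chosen so that $f_J(g,s;4)$ lies in the $\Vm_4^\vee$-isotype at every $s$, and at $s=5$ the intertwining operator cuts out the archimedean minimal representation inside $\mathrm{Ind}_P^G(|\nu|^5)$, in which the $\Vm_4^\vee$-type is the unique minimal $K$-type and is annihilated by the Schmid operator $\mathcal{D}_4$.

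The main obstacle is the archimedean intertwining computation: one must pin down the precise order of vanishing of $c_\infty(s)$ at $s=5$ and verify that, after multiplication by the finite Euler product, the resulting leading Laurent coefficient is nonzero. This requires a careful analysis of the reducibility of $\mathrm{Ind}_P^G(|\nu|^5)$ at the real place and of the position of the $\Vm_4^\vee$-isotype in its composition series. A clean alternative, which I would pursue in parallel, is to sidestep the full archimedean calculation by invoking the characterization of the spherical-minimal $K$-type vector in the archimedean minimal representation (cf.\ \cite{ganSavin}, \cite{kazhdanPolishchuk}) and showing directly that both $\mathrm{Res}_{s=24}E_J(g,s;4)$ and $E_J(g,s=5;4)$ realize the same distinguished vector at every place, thereby forcing them to be proportional.
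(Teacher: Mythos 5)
Your overall strategy is the same as the paper's: both arguments rest on the functional equation $s \leftrightarrow 29-s$ for $Ind_{P_J}^{G_J}(|\nu|^{s})$, with the finite intertwining factor an explicit ratio of zeta values and the archimedean intertwiner applied to the distinguished $\Vm_4$-valued section, and both conclude by letting a zero of the archimedean factor at $s=5$ cancel the at-most-simple pole at $s=24$. The genuine gap sits exactly where you flag the ``main obstacle'': the statement that $c_\infty(s)$ vanishes at $s=5$ (to the right order, with everything else finite and nonzero) \emph{is} the content of the paper's Proposition \ref{prop:Mws}, and you propose no method for proving it. The paper establishes it by factorizing the long intertwiner along the relative root system of type $F_4$ attached to $P_0$: the short simple roots give spherical rank-one intertwiners on groups isogenous to $\SO(9,1)$, while the long simple roots require an explicit $\GL_2$-computation on the three-dimensional subspace $V_+ \subseteq \Vm_4$ spanned by $x^8+y^8$, $x^2y^2(x^4+y^4)$, $x^4y^4$, together with a change of basis adapted to each long root; the resulting product $Z(s)A(s)$ exhibits the factor $(s-5)$. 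Without this (or a precise representation-theoretic substitute), your argument establishes neither regularity at $s=5$ nor that the value there is a \emph{nonzero} multiple of the residue. Your fallback of ``realizing the same distinguished vector at every place'' also does not address nonvanishing of either object.

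Two further points. First, the claim that the pole at $s=24$ is ``produced by the zeta ratio'' is inaccurate for this section: in $c_f(w_0;s)=\zeta(2s-29)\zeta(s-28)\zeta(s-23)\zeta(s-19)/\bigl(\zeta(2s-28)\zeta(s)\zeta(s-5)\zeta(s-9)\bigr)$, the trivial zero of $\zeta(s-28)$ at $s=24$ cancels the pole of $\zeta(s-23)$, so this factor is finite there. The paper instead deduces the pole at $s=24$ (i.e., the nonvanishing of $\theta_{Gan}$, for which it wants a direct proof rather than the indirect one of Gan--Savin, which is what your citation of ``Gan's simple pole'' amounts to for this section) from the observation that $E(g,s;4)$ does not vanish at $s=5$ --- the $f(g,s;4)$ term survives in the constant term since the constant-term pieces transform by distinct characters of the center of the Levi --- combined with $h(5)=0$; this same device is what guarantees the proportionality constant is nonzero without pinning down the exact order of the zero, and your proposal has no substitute for it. Second, ``modularity is then automatic'' is too quick: the identity $\mathcal{D}_4E(g,s;4)=(s-5)E(g,f_s';4)$ could a priori be spoiled by a pole of $E(g,f_s';4)$ at $s=5$, which the paper rules out by checking that $\mathcal{D}_4$ annihilates the two surviving constant-term contributions along $P_0$; your route via the minimal $K$-type of the archimedean minimal representation can be made to work, but it presupposes the identification of $E(\cdot,s=5;4)$ with (a multiple of) the residue, i.e., the very proportionality your argument has not yet secured.
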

This proposition is essentially contained in \cite{ganATM}, \cite{ganSW}, \cite{ganSavin}, \cite{grossWallach2}.  As it is crucial to the main results of this paper, we spell out a direct proof of it in section \ref{sec:mmf}.

Now, because $\theta_{Gan}$ is a modular form on $G_J$, the results of \cite{pollackQDS} imply that its Fourier expansion takes the following shape.  Denote by $\Theta_0$ Coxeter's integral subring \cite[(5.1)]{elkiesGrossIMRN} of $\Theta$, by $J_0 = H_3(\Theta_0)$, and $W_J(\Z) = \Z \oplus J_0 \oplus J_0^\vee \oplus \Z \subseteq W_J(\Q)$.  Then for $x \in N_J(\R)$ and $m \in H_J(\R)$ one has
\[\theta_{Gan,0}(n(x)m) = \theta_{00}(m) + \sum_{\omega \in W_J(\Z)}{a_{\theta}(\omega)e^{2\pi i \langle x, \omega\rangle}\mathcal{W}_{2\pi\omega}(m)}\]
with the constant term $\theta_{00}(m)$ given by
\[\theta_{00}(m) = \beta_1 \Phi(m) x^8 + \beta_0 x^4 y^4 + \beta_1 \Phi'(m) y^{8}\]
for a holomorphic weight $4$ modular form $\Phi$ on $H_J = \GE_7$.  Because $\theta_{Gan}$ is minimal, $a_{\theta}(\omega)$ is nonzero only for $\omega$ rank one.  Denote by $\Delta(\omega)$ the largest positive integer so that $\omega \in \Delta(\omega) W_J(\Z)$.  By \cite{ganRegularized} and \cite{kazhdanPolishchuk}, we may scale $\theta_{Gan}$ so that
\[a_{\theta}(\omega) = \begin{cases} \sigma_{4}(\Delta(\omega)) &\mbox{if } \omega \text{ is rank one} \\ 0 &\mbox{if } \omega \text{ is rank two, three, or four}.\end{cases}\]

Moreover, from \cite{ganSW}, $\Phi$ is proportional to Kim's \cite{kimE7} level one, weight $4$ modular form on $H_J=\GE_7$.  Thus, applying the results of \cite{grossWallach2}, \cite{ganATM}, \cite{ganSW}, \cite{ganSavin}, \cite{kazhdanPolishchuk}, \cite{kimE7} and \cite{pollackQDS}, what is left is to pin down the constants $\beta_0$ and $\beta_1$.  This is precisely what Theorem \ref{thm:thetaE8} does.  We restate the result now.

\begin{theorem}\label{thm:thetaAgain} The Eisenstein series $E_J(g,s;4)$ is regular at $s=5$ and defines a modular form on $G_J=E_{8,4}$ of weight $4$ at this point. The Fourier coefficient corresponding to the rank one element $(0,0,0,1) \in W_J$ is nonzero. Denote by $\theta_{Gan}$ the scalar multiple of $E_J(g,s;4)$ for which this Fourier coefficient is equal to $\frac{1}{24}$.  Moreover, denote by $\Phi_{Kim}$ the spherical automorphic form on $H_J=GE_7$ so that $H_{Kim} = j(g,i)^4 \Phi_{Kim}$ descends to $\mathcal{H}_J^{\pm}$, is holomorphic on $\mathcal{H}_J^{+}$, antiholomorphic on $\mathcal{H_J}^{-}$, and on $\mathcal{H}_J^{+}$ has the Fourier expansion
\[H_{Kim} = \frac{1}{240} + \sum_{T \in J_0, T \geq 0 \text{  rank one}}{\sigma_3(\Delta(T)) q^{T}}.\]
Then one has
\[\theta_{Gan,0} = |\nu(g)|^{5}\left( \frac{\zeta(5)}{\pi^4 2^5} \frac{x^4 y^4}{4!4!} + \frac{1}{3}\left(\Phi_{Kim} \frac{x^8}{8!} + \Phi_{Kim}' \frac{y^{8}}{8!}\right)\right) + \frac{1}{24} \sum_{\omega \in W_J(\Z), \text{ rank one}}{\sigma_4(\Delta(\omega)) \Wh_{2\pi \omega}(g)}.\]
\end{theorem}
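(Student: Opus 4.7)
The plan is to reduce the theorem to three assertions of decreasing scope and handle each in turn: (i) that $E_J(g,s;4)$ is regular at $s=5$ and proportional to $\theta_{Gan}$; (ii) that the Fourier expansion has the stated \emph{shape}, with unknown scalars $\beta_0$, $\beta_1$ in the constant term and an unknown scalar $c$ rescaling $\sigma_4(\Delta(\omega))$ in the rank one piece; (iii) that these three scalars equal $\zeta(5)/(\pi^4 2^5)$, $1/3$, and $1/24$ respectively. Step (i) is exactly Proposition \ref{prop:EisReg1}, which is to be proven in Section \ref{sec:mmf}; for the present theorem I would simply invoke it.

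For step (ii), I would combine the Fourier expansion theorem of \cite{pollackQDS} (which gives the general $\Wh_{2\pi\omega}$-shape of every nonconstant term, together with the $x^{2n}/\Phi$, $x^ny^n$, $y^{2n}/\Phi'$ shape of the constant term) with the fact that $\theta_{Gan}$ is an element of the automorphic minimal representation, so only rank one Fourier coefficients are nonzero (as recalled from \cite{ganSavin}, \cite{magaardSavin}). The Kazhdan--Polishchuk/Gan formulas \cite{kazhdanPolishchuk}, \cite{ganRegularized} for the rank one coefficients of the minimal representation then force $a_\theta(\omega) = c\,\sigma_4(\Delta(\omega))$ for a single global scalar $c$, and Gan's Siegel--Weil identity \cite{ganSW} combined with \cite{kimE7} identifies the holomorphic weight four modular form $\Phi$ on $H_J$ appearing in the constant term with a scalar multiple of Kim's $\Phi_{Kim}$. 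Normalizing $\theta_{Gan}$ so that the Fourier coefficient at $\omega = (0,0,0,1) \in W_J(\Z)$ equals $\tfrac{1}{24}$ (which is possible because this coefficient is a priori nonzero by the same structural formulas) pins down $c = 1/24$ and leaves only $\beta_0$ and $\beta_1$ undetermined.

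For step (iii), the strategy is to transport the identification of $\beta_0, \beta_1$ to a setting where the Eisenstein series is in the range of absolute convergence. Concretely, I would invoke Gan's Siegel--Weil identity \cite{ganSW}, which relates the restriction of $\theta_{Gan}$ to the dual pair $F_4 \times G_2 \subseteq E_8$ (integrated against the anisotropic $F_4$) to the degenerate Heisenberg Eisenstein series $E^{G_2}(g,s=5;4)$ on quaternionic $G_2$. Since $\dim W_J = 4$ in the $G_2$ case, the bound $n > \dim J + 1 = 2$ is satisfied at $n=4$, and the value $s = n+1 = 5$ is safely in the region of absolute convergence. I would then compute the Fourier expansion of $E^{G_2}(g,5;4)$ directly by unfolding along the Bruhat decomposition: the constant term gives an explicit combination of $\Phi_{Kim}^{G_2}$-type contributions and a middle term with a $\zeta(5)$-factor coming from the archimedean integral of $\Phi_{\infty,4}$ together with the Euler product at the finite places, while the rank one Fourier coefficients of the $G_2$ Eisenstein series can be evaluated as local orbital integrals and matched with $\sigma_4(\Delta(\omega))$. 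Comparing the two sides of the Siegel--Weil identity at the level of constant terms then yields $\beta_0 = \zeta(5)/(\pi^4 2^5)$ and $\beta_1 = 1/3$.

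The main obstacle is the bookkeeping in step (iii): the archimedean integral $\int_{\GL_1(\R)} |t|^s \Phi_{\infty,4}(tg^{-1}E_{13})\,d^\times t$ must be evaluated precisely at $s=5$ (giving the $\pi^{-4}2^{-5}$ factor together with the $\Gamma$-values absorbed into the $\Wh$-normalization), the Siegel--Weil scalar must be tracked through Haar measure normalizations, and the proportionality constant between $\Phi$ and $\Phi_{Kim}$ must be pinned down from Kim's expansion $H_{Kim} = 1/240 + \sum \sigma_3(\Delta(T)) q^T$. Once these normalizations are all aligned, the $\zeta(5)$ in $\beta_0$ and the denominators $\tfrac{1}{3}, \tfrac{1}{24}$ fall out of the formulas; the remainder of the proof is direct verification.
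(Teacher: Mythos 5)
Your overall architecture (regularity via Proposition \ref{prop:EisReg1}, shape via \cite{pollackQDS} plus minimality, and a transfer to the absolutely convergent $G_2$ Eisenstein series through Gan's Siegel--Weil identity) overlaps with the paper, but you allocate the two tools in the reverse way. The paper pins down $\beta_0,\beta_1$ \emph{directly on $E_8$}: Corollary \ref{cor:ThetaMF} (resting on the archimedean intertwiner computation of Proposition \ref{prop:Mws} and Gan's simple-pole result) shows that only the $i=0$ and $i=1$ Bruhat cells contribute to the constant term at $s=5$, so the explicit computations of $f_0$ and $f_1^0$ together with Kim's identity $E_{hol}(g,s=4;4)=240|\nu(g)|^5\Phi_{Kim}(g)$ give the constant term of the specific section; the Siegel--Weil identity is then used only to evaluate the rank one coefficients, by transferring them from the $G_2$ series computed in Corollary \ref{cor:eisConFC1}. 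You instead take the rank one coefficients from the Kazhdan--Polishchuk/Gan structure theory plus a normalization by fiat, and propose to get $\beta_0,\beta_1$ from a constant-term comparison across the Siegel--Weil identity. That route is not unreasonable, but it requires extra inputs the direct $E_8$ computation avoids: to match the $x^8$-components you must evaluate the period $\int_{[F_4^{an}]}\Phi_{Kim}((g,h))\,dh$ as an explicit multiple of the weight four Eisenstein series on $\GL_2$ (an Elkies--Gross-type computation with the mass data of \cite{elkiesGrossIMRN}), and you must check that no nonzero rank one $\omega\in W_J$ contributes to the $N_{G_2}$-constant term of the restriction.

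The genuine gap is that your two normalizations are never connected. The theorem's content is the \emph{ratio} between the constant-term constants and the rank one coefficients, and the Siegel--Weil identity (Theorem \ref{thm:EisRegSW}) is an identity for the specific unnormalized section $E_J(g,s=5;4)$. Comparing constant terms therefore determines only the unnormalized constants $\beta_0',\beta_1'$; to convert these into the stated values $\zeta(5)/(\pi^4 2^5)$ and $1/3$ relative to your normalization $a_\theta((0,0,0,1))=\tfrac{1}{24}$, you must also know the unnormalized rank one coefficient of $E_J(g,s=5;4)$ at $(0,0,0,1)$. This cannot be read off on $E_8$ at $s=5$: you are outside absolute convergence there, so the vanishing of the higher Bruhat-cell contributions to a rank one coefficient (Proposition \ref{prop:Eisivan}) is not available, and minimality only kills coefficients of rank $\geq 2$. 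The only viable source is the $G_2$ side, and transferring rank one coefficients across the dual pair requires exactly the two facts you omit: that the cuspidal discrepancy in the Siegel--Weil identity has no rank one (not just no constant) Fourier coefficients, and that the fibers $\Omega_I(\omega_0)$, $\Omega_E(\omega_0)$ over a rank one $\omega_0\in W_F$ are singletons (the paper's Lemma \ref{lem:a1sEq}), so that the rank one coefficients of $\theta(\mathbf{1})$ coincide with those of $E_J$. You do compute the $G_2$ rank one coefficients and ``match with $\sigma_4$,'' but without this bridge the matching says nothing about the $E_8$ normalization, and the claimed numerical values do not follow. Supplying that bridge (or replacing your constant-term comparison by the paper's direct computation via Corollary \ref{cor:ThetaMF}) would close the argument.
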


We will prove this theorem in section \ref{sec:mmf} after understanding the Fourier expansion of degenerate absolutely convergent Heisenberg Eisenstein series in section \ref{sec:FEeis}.  We now detail and prove the corollaries of Theorem \ref{thm:thetaAgain} that were mentioned in the introduction.

\subsection{The singular modular form} In this subsection, we consider the singular modular form $\theta^{(2)}_{E_7}$ on the simply-connected quaternionic $E_7$.  This modular form is defined as follows.  First, fix a quaternion algebra $B$ over $\Q$, which is ramified at the archimedean place.  Recall that the quaternionic Lie algebra $\mathfrak{e}_7$ is $\g(H_3(B))$, in the notation of \cite{pollackQDS}.  For ease of notation, we write $J_B = H_3(B)$ and $J_{\Theta} = H_3(\Theta)$.

Now, fix $\gamma \in \Q^\times$ not representing the identity coset in $\Q^{\times}/N(B^\times)$; in other words, $\gamma < 0$.  By the Cayley-Dickson construction, one can form an octonion algebra $\Theta$ out of $B$ and $\gamma$.  See \cite[section 8]{pollackLL}.  With such a $\gamma$, $\Theta$ is ramified at infinity.  The Cayley-Dickson construction induces an identification $J_B \oplus B^3 \simeq J_{\Theta}$, an embedding $\h(J_B) \rightarrow \h(J_\Theta)$, and then consequently an embedding $\g(J_B) \hookrightarrow \g(J_\Theta)$.

More precisely, denote by $W_6$ the defining representation of $\GSp_6$, and define an identification $W_{J_B} \oplus W_6 \otimes B \simeq W_{J_\Theta}$ as in \cite[section 8.1.2]{pollackLL}.  From \cite[Proposition 8.1.5]{pollackLL}, one gets a group $H_B'$ (this is the group $G(\gamma,C)$ in the notation of that proposition) together with maps $H_B' \rightarrow H_{J_B}$ and $H_B' \rightarrow H_{J_\Theta}$ where the first map induces an isomorphism of Lie algebras.  Consequently, one obtains a map $\h^0(J_B) \rightarrow \h^0(J_\Theta)$.  As $\g(J) = \sl_2 \oplus \h^{0}(J) \oplus V_2 \otimes W_J$, one obtains a specific embedding $\g(J_B) \rightarrow \g(J_\Theta)$.  

Denote by $A_B$ the connected component of the identity of the subgroup of $G_{J_\Theta}$ that preserves $\g(J_B)$.  We define a map $B^{n=1} \rightarrow A_B$ as follows.  First, define $B^{n=1} \rightarrow H_{J_\Theta}$ via its action on $W_{J_\Theta} \simeq W_{J_B} \oplus B^6$ as $s \cdot (w,v) = (w,vs^{-1})$ for $s \in B^{n=1}$, $w \in W_{H_3(B)}$ and $v \in B^6$.  Because the quadratic norm on $\Theta$ is $n_{\Theta}(x,y) = n_B(x) - \gamma n_B(y)$ for $x,y \in B$, it is easy to see directly that this action preserves the symplectic and quartic form on $W_{H_3(\Theta)}$.  Now because $H_{J_\Theta} \rightarrow G_{J_\Theta}$, this defines $B^{n=1} \rightarrow G_{J_\Theta}$.  Finally, it is clear by construction that this $B^{n=1}$ preserves $\g(J_B)$, and thus we obtain $B^{n=1}\rightarrow A_B$, as claimed.

Denote by $E_B$ the connected component of the identity of the centralizer of $B^{n=1}$ in $A_B$. 
\begin{lemma} The group $E_B$ is the simply connected quaternionic $E_7$.\end{lemma}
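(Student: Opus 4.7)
The plan is to identify $E_B$ with the simply connected quaternionic $E_7$ in three steps: compute $\mathrm{Lie}(E_B)$, pin down its isogeny type, and verify its real form.

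First I would decompose $\g(J_\Theta)$ as a representation of $\mathfrak{sl}_2^B := \mathrm{Lie}(B^{n=1})$. In the $\Z/2$-model $\g(J_\Theta) = \sl_2 \oplus \h^0(J_\Theta) \oplus V_2 \otimes W_{J_\Theta}$, the external $\sl_2$ commutes with $B^{n=1}$. Using $W_{J_\Theta} \simeq W_{J_B} \oplus W_6 \otimes B$, the summand $V_2 \otimes W_{J_B}$ is $B^{n=1}$-invariant, while $V_2 \otimes W_6 \otimes B$ has no invariants because $B$, viewed as a right $B^{n=1}$-module via $s\colon x \mapsto xs^{-1}$, is a sum of copies of the standard $2$-dimensional representation. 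For $\h^0(J_\Theta)$, the embedded $\h^0(J_B)$ lies in the invariants, and by matching with the standard $E_7 \times \SL_2$ dual pair decomposition $\mathfrak{e}_8 = \mathfrak{e}_7 \oplus \mathfrak{sl}_2 \oplus V_{56}\otimes V_2$, one sees that no further $\mathfrak{sl}_2^B$-invariants appear. Summing the pieces yields $\mathrm{Lie}(E_B) = \g(J_B)$, a $\Q$-form of $\mathfrak{e}_7$. In particular $B^{n=1}$ fixes $\g(J_B)$ pointwise, so any element of $G_{J_\Theta}$ centralizing $B^{n=1}$ automatically preserves $\g(J_B)$; equivalently, $E_B$ coincides with the identity component of the centralizer of $B^{n=1}$ inside the full group $G_{J_\Theta}$.

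Second, I would identify $E_B$ as the simply connected form. The group $G_{J_\Theta}$ is simultaneously simply connected and adjoint of type $E_8$, and inside $E_8$ the $E_7 \times \SL_2$ dual pair arises as the image of $(E_7^{sc} \times \SL_2)/\Delta\mu_2$, with the centralizer of the $\SL_2$ factor being exactly the $E_7^{sc}$ factor (checkable directly on root data, since the centralizer is a connected reductive group of type $E_7$ containing a maximal torus and all $E_7$-root subgroups). Applied with $B^{n=1}$ in place of $\SL_2$ and descended to $\Q$, this shows that $E_B$ is a $\Q$-form of $E_7^{sc}$. The form is pinned down at infinity: since $B$ is ramified at $\infty$, $B^{n=1}(\R) = \SU(2)$ is compact and coincides with the $\SU(2)$ factor of the maximal compact $(E_7^c \times \SU(2))/\mu_2$ of $G_{J_\Theta}(\R)$; its centralizer has maximal compact $E_7^c$, which is the maximal compact of quaternionic $E_7^{sc}(\R)$.

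The main step to nail down is the Lie-algebra calculation: one must match the concrete Cayley-Dickson description of $\g(J_\Theta)$ from \cite[Section 8]{pollackLL} with the abstract $E_7 \times \SL_2$ branching in type $E_8$, and in particular confirm that the embedded $\mathfrak{sl}_2^B$ is a ``short'' $\mathfrak{sl}_2$ of the dual-pair type so that its commutant has dimension $133$ rather than something larger. Once that dimension match is in hand, the identification of the isogeny class and of the real form is routine root-datum bookkeeping, and yields $E_B = E_7^{sc}$ in its quaternionic form.
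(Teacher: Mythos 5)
Your step 1 (computing $\g(J_\Theta)^{B^1}=\g(J_B)$, hence $\mathrm{Lie}(E_B)=\g(J_B)$) is correct and is exactly the paper's starting point, but your final step contains a genuine error. The group $B^{n=1}(\R)\simeq \SU(2)$ is \emph{not} conjugate to the $\SU(2)$ factor of the maximal compact $(\SU(2)\times E_7^c)/\mu_2$ of quaternionic $E_8(\R)$: the centralizer of that distinguished $\SU(2)$ has Lie algebra the \emph{compact} form $\mathfrak{e}_7^c$ (in $\mathfrak{k}$ its commutant is $\mathfrak{e}_7^c$, and $\mathfrak{p}\simeq V_2\otimes V_{56}$ has no $\SU(2)$-invariants), so if $B^1(\R)$ were that subgroup then $E_B(\R)$ would be compact, contradicting your own step 1, which gives $\mathrm{Lie}(E_B)\otimes\R\simeq \mathfrak{e}_{7(-5)}$. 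Relatedly, the compact $E_7$ is not the maximal compact of quaternionic $E_7^{sc}(\R)$; the maximal compact subalgebra there is $\mathfrak{su}(2)\oplus\mathfrak{so}(12)$. Fortunately this third step is also unnecessary: once $\mathrm{Lie}(E_B)=\g(J_B)$ is established, the real form is automatic, since $\g(H_3(B))$ with $B$ ramified at infinity is by definition the quaternionic $\mathfrak{e}_7$; no maximal-compact argument is needed (and the paper gives none).

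For the isogeny type your route genuinely differs from the paper's. You invoke the structure of the $E_7\times \SL_2$ dual pair in $E_8$, namely that the connected centralizer of the $A_1$ factor is the simply connected $E_7$; this is a correct fact, but it requires knowing that the embedded $\mathfrak{sl}_2^B$ is conjugate (over $\overline{\Q}$) to the dual-pair $\mathfrak{sl}_2$ — which does follow from your step 1, since its commutant is of type $E_7$ and all such subalgebras of $\mathfrak{e}_8$ are conjugate — and you leave the root-datum verification implicit. The paper argues more directly and stays inside its own constructions: the nontrivial element of $\mu_2\subseteq B^{1}$ lies in the connected group $H_B'$ of \cite[Proposition 8.1.5]{pollackLL}, which centralizes $B^1$, so $H_B'\subseteq E_B$ and hence $\mu_2\subseteq E_B$; since every element of $E_B$ commutes with $B^1\supseteq \mu_2$, this $\mu_2$ is central in $E_B$, and a connected group with Lie algebra $\mathfrak{e}_7$ and nontrivial center must be the simply connected form. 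Your approach buys generality (it identifies the abstract dual-pair picture), while the paper's buys self-containedness; either way, you should delete or correct the archimedean paragraph and, if you keep your route, actually record the conjugacy of $\mathfrak{sl}_2^B$ with the dual-pair $\mathfrak{sl}_2$ rather than only flagging it.
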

\begin{proof} Indeed, one verifies without difficulty that $\g(J_\Theta)^{B^1} = \g(J_B)$, and thus $E_B$ has the correct Lie algebra.  Moreover, the $\mu_2 \subseteq B^1$ centralizes $B^1$ in $A_B$, and this $\mu_2$ sits in $H_B'$, see \cite[Proposition 8.1.5]{pollackLL}.  Because $H_B'$ is connected and centralizes $B^{1}$, this proves that $\mu_2$ is in the center of $E_B$.  Thus $E_B$ is connected, has Lie algebra equal to $\g(J_B)$, and contains $\mu_2$ in its center, so $E_B \simeq E^{sc}_{7,4}$.\end{proof}

Denote by $\theta^{(2)}$ the automorphic function that is the pullback of $\theta_{Gan}$ to $E_B$ via the embedding $E_7^{sc} \simeq E_B \rightarrow G_{J_\Theta}$. Compare \cite{grossWallach1} and \cite{loke1,loke2}.  We have the following result, which is a restatement of Corollary \ref{cor1} (1).

\begin{proposition}\label{prop:theta2} The automorphic function $\theta^{(2)}$ is a modular form on $E_7^{sc}$ of weight $4$.  It has nonzero rank two Fourier coefficients, but all of its rank three and rank four Fourier coefficients are $0$. \end{proposition}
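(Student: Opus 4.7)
The plan is to verify three things in sequence: (i) $\theta^{(2)}$ is a weight four modular form on $E_7^{sc}$, (ii) its Fourier coefficients along the Heisenberg parabolic of $E_7^{sc}$ can be read off from those of $\theta_{Gan}$ via the orthogonal projection $p\colon W_{J_\Theta}\twoheadrightarrow W_{J_B}$, and (iii) rank monotonicity plus the rank-one support of $\theta_{Gan}$ force the vanishing and non-vanishing claims.

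For (i), automorphy under $E_B(\Q)=E_7^{sc}(\Q)$ and moderate growth are immediate from the corresponding properties of $\theta_{Gan}$. The right $K_\infty^{E_7}$-equivariance follows because the embedding $E_B\hookrightarrow G_{J_\Theta}$ carries the $\SU(2)$ factor of the maximal compact isomorphically into the $\SU(2)$ factor of $K_\infty^{E_8}$, so the representation $\Vm_4$ pulls back to $\Vm_4$. Annihilation of $\theta^{(2)}$ by the Schmid-type operator $\mathcal{D}_4^{E_7}$ follows from the compatibility of the Schmid operators under the embedding $\g(J_B)\hookrightarrow\g(J_\Theta)$, which is established in \cite[sections 7 and 8]{pollackLL}: pulling back $\mathcal{D}_4^{E_8}\theta_{Gan}=0$ along the fixed-point embedding kills the terms involving directions in $W_6\otimes B$, leaving exactly $\mathcal{D}_4^{E_7}\theta^{(2)}=0$.

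For (ii), the Cayley--Dickson decomposition $W_{J_\Theta}\simeq W_{J_B}\oplus(W_6\otimes B)$ is orthogonal for Freudenthal's symplectic pairing, so $N_{J_B}\subseteq N_{J_\Theta}$ with common center $N_0$ and inclusion $W_{J_B}\hookrightarrow W_{J_\Theta}$ on abelianizations. Integrating the Fourier expansion of $\theta_{Gan}$ from Theorem \ref{thm:thetaAgain} over $N_{J_B}(\Q)\backslash N_{J_B}(\A)$ against the character $e^{-2\pi i\langle\omega',\cdot\rangle}$ picks out exactly those modes indexed by $\omega\in W_{J_\Theta}$ with $p(\omega)=\omega'$, giving
\[
a_{\theta^{(2)}}(\omega')(g)=\frac{1}{24}\sum_{\substack{\omega\in W_{J_\Theta}(\Z)\\ p(\omega)=\omega',\ \omega\text{ rank one}}}\sigma_4(\Delta(\omega))\,\mathcal{W}_{2\pi\omega}(g).
\]

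For (iii), the vanishing of the rank-three and rank-four Fourier coefficients of $\theta^{(2)}$ reduces to the rank-monotonicity assertion that $p$ cannot send a rank-one element of $W_{J_\Theta}$ to an element of $W_{J_B}$ of rank $\geq 3$; this follows from the explicit orbit structure on $W_{J_\Theta}$ relative to the Cayley--Dickson splitting analyzed in \cite[sections 7 and 8]{pollackLL}. For the non-vanishing of some rank-two coefficient, I would exhibit a concrete rank-two $\omega'\in W_{J_B}(\Z)$ (for instance the sum of two rank-one elements of $W_{J_\Theta}$ that happen to lie in $W_{J_B}$), observe that the fiber $p^{-1}(\omega')$ contains rank-one lattice points, and conclude that the corresponding sum of positive terms $\sigma_4(\Delta(\omega))\,\mathcal{W}_{2\pi\omega}(g)$ is not identically zero on $E_B(\R)$, using the linear independence of the Whittaker functions $\mathcal{W}_{2\pi\omega}$ (distinguishable by the factor $|\langle\omega,gr_0(i)\rangle|$ governing their exponential decay). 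The main obstacle is the rank-monotonicity claim; once extracted from \cite{pollackLL}, both the vanishing and non-vanishing statements become formal consequences of the displayed fiber sum.
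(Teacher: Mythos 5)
Your overall strategy (pull back, compute the Fourier coefficients of $\theta^{(2)}$ as fiber sums of those of $\theta_{Gan}$ over the Cayley--Dickson decomposition, then use rank-one support and positivity) is the paper's, but your justification of the modularity step has a genuine gap. You assert that ``compatibility of the Schmid operators under the embedding $\g(J_B)\hookrightarrow\g(J_\Theta)$'' is established in \cite[sections 7 and 8]{pollackLL}; those sections are purely algebraic (the Cayley--Dickson and second Tits constructions and their effect on Freudenthal spaces and Fourier coefficients) and say nothing about Schmid operators. More importantly, the mechanism you describe --- pulling back $\mathcal{D}_4^{E_8}\theta_{Gan}=0$ ``kills the terms involving directions in $W_6\otimes B$'' --- is not a proof: $\mathcal{D}_4^{E_8}\theta_{Gan}$ is a sum of derivatives over a basis of $\p(J_\Theta)$ followed by a projection onto a specific $K^{E_8}$-type, and the vanishing of the sub-sum over a basis of $\p(J_B)$, after the relevant $K^{E_7}$-projection, does not formally follow from the vanishing of the whole sum; making this precise is exactly the branching content of \cite{grossWallach1,loke1,loke2}. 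The paper sidesteps this by observing that $\theta_{Gan}$ satisfies the explicit Fourier-expansion equations of \cite[Theorem 7.3.1 or 7.5.1]{pollackQDS}, hence so does its restriction, and those equations characterize weight $4$ modular forms; alternatively one cites Gross--Wallach/Loke directly. As written, your step (i) needs one of these inputs.

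The rest essentially matches the paper. Your fiber-sum formula is precisely what the paper quotes from \cite[Theorem 8.1.4]{pollackLL}, namely $a_{\theta^{(2)}}(\omega')=\sum_{u\in W_6\otimes B}a_{\theta_{Gan}}(\omega'+u)$, and the rank assertions (a rank one element of $W_{J_\Theta}$ cannot lie over an element of $W_{J_B}$ of rank $\geq 3$, while rank two elements are hit) are also what the paper extracts from that theorem; deferring your ``rank monotonicity'' to \cite{pollackLL} is therefore consistent, but note that the non-vanishing half also requires the existence of a rank one lattice point in the fiber over some rank two $\omega'$, which your parenthetical construction of a rank two $\omega'$ does not by itself supply. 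Finally, the appeal to ``linear independence of the Whittaker functions'' is unnecessary and actually misdirected: for $g\in H_{J_B}(\R)$ one has $\langle \omega'+u, g\,r_0(i)\rangle=\langle\omega', g\,r_0(i)\rangle$, so all the functions $\Wh_{2\pi(\omega'+u)}$ in a given fiber restrict to the \emph{same} function $\Wh_{2\pi\omega'}$; the non-vanishing then follows at once from the positivity of the coefficients $\sigma_4(\Delta(\cdot))$, which is exactly the argument the paper uses.
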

The definitions and results of \cite{pollackQDS} were made for adjoint groups, not simply connected ones.  However, it is easy to see that they carry over immediately for the simply connected $E_7$.  Indeed, because the $\mu_2$ that is the center of $E_7^{sc}$ acts trivially on $\Vm_n$, and because the map of real groups $E_7^{sc}(\R) \rightarrow E_7^{ad}(\R)$ is surjective \cite{thang}, the archimedean theory is identical for modular forms on the adjoint $E_7$ and modular forms on the simply connected $E_7$.
\begin{proof}[Proof of Proposition \ref{prop:theta2}] To see that $\theta^{(2)}$ is a modular form, one must only check the condition $\mathcal{D}_4 \theta^{(2)} = 0$.  One way to do this is simply observe that since $\theta_{Gan}$ satisfies the equations of \cite[Theorem 7.3.1 or Theorem 7.5.1]{pollackQDS}, so too does $\theta^{(2)}$.  One can also reason directly with the definition of $\mathcal{D}_4$ in terms of a basis of $\p(J_B)$ and $\p(J_\Theta)$, or apply results of \cite{grossWallach1,loke1,loke2}.

For the analysis of the Fourier coefficients, this is a direct consequence of \cite[Theorem 8.1.4]{pollackLL}.  Namely, if $x \in W_{J_B}$ is nonzero, and $a(x)$ denotes the $x$-Fourier coefficient of the modular form $\theta^{(2)}$, then 
\[a(x) = \sum_{u \in W_6(B)}{a_{\theta_{Gan}}(x + u)}.\]
(The sum has only finitely many nonzero terms.)  Because all the numbers $a_{\theta_{Gan}}(\omega)$ are non-negative, it is clear that $\theta^{(2)}$ is nonzero.  Finally, because $a_{\theta_{Gan}}(\omega)$ is only nonzero for $\omega$ rank one, all the rest of claims of the proposition follow immediately from \cite[Theorem 8.1.4]{pollackLL}.  This completes the proof.\end{proof}

\subsection{The distinguished modular form} Pulling back $\theta_{Gan}$ to the semisimple simply-connected quaternionic $E_6$, we obtain a modular form $\theta^{(4)}$.  In this subsection, we discuss the automorphic form $\theta^{(4)}$, and explain why it is distinguished.

Fix a quadratic imaginary extension $K$ of $\Q$.  Recall that the Lie algebra $\g(H_3(K))$ is the quaternionic Lie algebra of type $E_6$.  Using $H_3(K)$ and some additional data, the so-called second construction of Tits produces an exceptional cubic norm structure $J$.  We will use this construction to define the map $E_6^{sc}\rightarrow G_J\simeq E_{8,4}$ and analyze the Fourier coefficients of $\theta^{(4)}$.  

Thus, suppose $\lambda \in K^\times$, $S \in H_3(K)$ and that $\lambda \lambda^* = N(S)$.  In this subsection, we let $J_K$ denote $H_3(K)$ and $B$ denote $M_3(K)$.  Set $J = H_3(K) \oplus M_3(K) = J_K \oplus B$.  Then one can make $J$ into a cubic norm structure using $\lambda$ and $S$; see, e.g. \cite[section 7.1]{pollackLL}.  If $S$ is positive definite, then so is the trace pairing on $J$, and thus the Lie algebra $\g(J)$ is of type $E_8$ and quaternionic at infinity.  We will choose $\lambda =1$ and $S= 1_{3}$, so that $J \simeq H_3(\Theta)$ over $\Q$, but other choices of $\lambda, S$ should yield interesting results\footnote{At this point, we have only understood the automorphic form $\theta_{Gan}$ when $J = H_3(\Theta)$, because in the computations above and below we used that $\theta_{Gan}$ was spherical at every finite place.}.  

Now, via this construction of Tits, we obtain $J_K \rightarrow J$ and then $\h(J_K) \rightarrow \h(J)$ and then finally $\g(J_K)\rightarrow \g(J)$.  More precisely, from \cite[section 7.2]{pollackLL} there is an identification $W_{J_K} \oplus B^2 \simeq W_{J}$.  From \cite[Proposition 7.2.2]{pollackLL}, there is a group $H_K'$ (denoted $G$ in that proposition) that comes with maps $H_K' \rightarrow H_{J_K}$ and $H_K' \rightarrow H_J$, and it is easy to see that the first map induces an isomorphism of Lie algebras.  Consequently, as above, these constructions define an embedding $\g(J_K) \rightarrow \g(J)$.

Denote by $A_K$ the connected component of the identity of the subgroup of $G_J$ that preserves $\g(J_K)$.  We will construct explicitly the simply-connected quaternionic $E_6$ inside $A_K$, just as we did for $E_7$ in the previous subsection.  More precisely, consider the subgroup $\SU_3$ of $B^\times$ defined as the $g \in B$ with $\det(g) =1$ and $gSg^* = S$.  Let this $\SU_3$ act on $J \simeq J_K \oplus B$ as $g \cdot (X,\alpha) = (X,\alpha g^{-1})$ for $X \in J_K$, $\alpha \in B$, and $g \in \SU_3$.  It is clear from the formulas defining the second construction of Tits \cite[section 7.1]{pollackLL} that this action preserves the norm and pairing on $J$.  It acts on $W_{J} = W_{J_K} \oplus B^2$ as $g \cdot (w,\eta) = (w, \eta g^{-1})$, for $w \in W_{J_K}$ and $\eta \in B^2$.  Consequently, one obtains maps $\SU_3 \rightarrow H_J \rightarrow G_J$, and this $\SU_3$ lands in $A_K$ because its action on $\g(J)$ fixes $\g(J_K)$.

Denote by $E_K$ the connected component of the identity of the centralizer of this $\SU_3$ in $A_K$. 
\begin{lemma} The group $E_K$ is the simply-connected quaternionic $E_6$.\end{lemma}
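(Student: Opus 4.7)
The plan is to mirror the proof just given for $E_B \simeq E_7^{sc}$. First I would verify that $\g(J)^{\SU_3} = \g(J_K)$, which shows that the Lie algebra of $E_K$ is $\g(J_K)$ and hence (since $E_K$ is connected by definition) that $E_K$ is a connected form of $E_6$, quaternionic at infinity. Second, I would exhibit a copy of $\mu_3$ sitting inside the center of $E_K$; since the simply-connected quaternionic $E_6$ is the unique connected form of type $E_6$ whose center contains $\mu_3$, the lemma follows.

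For the fixed-point calculation I would decompose
\[\g(J) = \sl_2 \oplus \h^0(J) \oplus V_2 \otimes W_J\]
and use the identification $W_J \simeq W_{J_K} \oplus B^2$ from \cite[section 7.2]{pollackLL}, together with the analogous decomposition of $\h^0(J)$ coming from the second Tits construction. The $\SU_3$ action is trivial on $\sl_2$ and on the $W_{J_K}$ and $\h^0(J_K)$ summands by construction. On the complementary pieces, $\SU_3$ acts through right multiplication by $g^{-1}$ on $B = M_3(K)$; the right regular representation of $\SU_3$ on $M_3(K)$ decomposes as a sum of copies of the standard three-dimensional representation and has no fixed vectors, so all complementary summands have vanishing $\SU_3$-invariants. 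This forces $\g(J)^{\SU_3} = \g(J_K)$ and hence $\mathrm{Lie}(E_K) = \g(J_K)$.

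For the central $\mu_3$, I would take the scalar matrices $\{\zeta \cdot 1_3 : \zeta^3=1\} \subseteq \SU_3$; this is the center of $\SU_3$, so it automatically lies in the centralizer of $\SU_3$ in $A_K$. To see that this $\mu_3$ lands in the identity component $E_K$, I would use \cite[Proposition 7.2.2]{pollackLL}: the connected group $H_K'$ admits a map $H_K' \to H_J$ whose image centralizes the $\SU_3$ action (as the $\SU_3$ acts only on the $B^2$ summand of $W_J$ while $H_K'$ acts only through $H_{J_K}$ on the $W_{J_K}$ summand), and this $\mu_3$ lies in the image of $H_K'$. Being connected and centralizing $\SU_3$, the image of $H_K'$ is contained in $E_K$, so $\mu_3 \subseteq E_K$. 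Since $E_K$ centralizes $\SU_3 \supseteq \mu_3$, this $\mu_3$ is central in $E_K$. Combining with the Lie algebra computation, $E_K \simeq E_6^{sc}$.

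The main obstacle is the bookkeeping in the second step: one must check that the $\mu_3$ that is central in $\SU_3$ genuinely comes from the center of $H_K'$ through the construction of \cite[Proposition 7.2.2]{pollackLL}, rather than merely being abstractly isomorphic to a $\mu_3$ sitting elsewhere. Once the explicit matching of this $\mu_3$ inside both $\SU_3$ and $H_K'$ is in place, the rest is essentially formal and parallels the $E_7^{sc}$ argument.
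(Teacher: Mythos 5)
Your proposal follows essentially the same route as the paper: verify $\g(J)^{\SU_3} = \g(J_K)$ to identify the Lie algebra of $E_K$, then use the connectedness of $H_K'$ (whose image centralizes the $\SU_3$ and contains the central $\mu_3$ of $\SU_3$) to place that $\mu_3$ inside $E_K$, and conclude $E_K \simeq E_6^{sc}$. Two small imprecisions in your justifications do not affect the argument: the complement of $\g(J_K)$ in $\g(J)$ also contains an adjoint-type $\sl_3$ summand, not only copies of the standard representation of $\SU_3$ (its $\SU_3$-invariants still vanish), and $H_K'$ does act nontrivially on the $B^2$ summand of $W_J$ — it centralizes $\SU_3$ because that action commutes with right multiplication by $g^{-1}$, not because it is trivial there.
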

\begin{proof} Indeed, one verifies quickly that $\g(J)^{\SU_3} = \g(J_K)$, and thus the Lie algebra of $E_K$ is $\g(J_K)$. Moreover, the $\mu_3$ that is the center of $\SU_3$ is identified with the diagonal $\mu_3$ in $H_K'$.  Because $H_K'$ is connected and in $E_K$, this $\mu_3$ is in $E_K$.  Hence $E_K$ is connected, has Lie algebra $\g(J_K)$ and contains $\mu_3$ in its center, which proves that $E_K \simeq E_6^{sc}$.\end{proof}

Denote by $\theta^{(4)}$ the automorphic function that is the pullback to $E_K$ via the embedding $E_K \rightarrow G_J$.  The following result proves Corollary \ref{cor1} (2).   
\begin{proposition}\label{prop:theta4} The automorphic function $\theta^{(4)}$ is a modular form on $E_6$ of weight $4$.  It has nonzero rank four Fourier coefficients.  However, $\theta^{(4)}$ is distinguished in the following sense: if $a(\omega)$ denotes the Fourier coefficient associated to $\omega \in W_{J_K}$ and $\omega$ is rank four, then $a(\omega) \neq 0$ implies that $q(\omega) = \kappa^2$ for some $\kappa \in K^\times$ with $\kappa^* = -\kappa$.\end{proposition}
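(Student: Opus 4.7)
The plan parallels the proof of Proposition \ref{prop:theta2}. First, to see that $\theta^{(4)}$ is a modular form of weight four on $E_6^{sc}$, one can either verify directly that the system of equations in \cite[Theorem 7.3.1 or Theorem 7.5.1]{pollackQDS} satisfied by $\theta_{Gan}$ restricts compatibly via $\p(J_K) \hookrightarrow \p(J)$ to the analogous system defining weight-four modular forms on $E_6^{sc}$, or compute the Schmid operator $\mathcal{D}_4$ explicitly in a basis adapted to the Tits second construction, or appeal to the representation-theoretic results of \cite{grossWallach1,loke1,loke2}. Combined with the $K_\infty$-equivariance inherited from $\theta_{Gan}$, this yields the modular form property of $\theta^{(4)}$.

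For the Fourier expansion, I would apply the analogue of \cite[Theorem 8.1.4]{pollackLL} in the Tits second-construction setup of \cite[section 7]{pollackLL}. Under the decomposition $W_J = W_{J_K} \oplus B^2$, for nonzero $\omega \in W_{J_K}(\Z)$ one should get
\[ a_{\theta^{(4)}}(\omega) = \sum_{\eta \in B^2 \cap W_J(\Z)}{a_{\theta_{Gan}}(\omega+\eta)}, \]
a finite sum in which, by Theorem \ref{thm:thetaAgain}, each nonzero summand equals $\tfrac{1}{24}\sigma_4(\Delta(\omega+\eta))$ with $\omega+\eta$ a rank-one element of $W_J(\Z)$. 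All summands are nonnegative, so $a_{\theta^{(4)}}(\omega) \neq 0$ precisely when some $\omega+\eta$ in this sum is rank one in $W_J$.

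To obtain the distinguished property for a rank-four $\omega \in W_{J_K}(\Z)$, I would expand the rank-one conditions on $\omega' = \omega+\eta \in W_J$ and project the resulting identities onto the $W_{J_K}$ component, using the explicit formulas for the cubic norm on $J = J_K \oplus B$ with $\lambda = 1$, $S = 1_3$ recorded in \cite[section 7.1]{pollackLL}. Tracking the action of the nontrivial Galois element of $K/\Q$ through the projection, this should force $q(\omega) = \kappa^2$ for some $\kappa \in K^\times$ with $\kappa^* = -\kappa$. Nonvanishing of some rank-four Fourier coefficient is then obtained by exhibiting any rank-one $\omega' \in W_J(\Z)$ whose projection to $W_{J_K}$ has rank four.

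The main obstacle I anticipate is this last step: translating the rank-one constraints on $\omega+\eta$ into the clean algebraic condition $q(\omega) = \kappa^2$ with $\kappa^* = -\kappa$. The unwinding is in principle a direct manipulation of the definitions in \cite[section 7]{pollackLL}, but the interaction of the involution on $K$ with the rank-one minors in the Tits construction makes the sign bookkeeping delicate, and one must confirm that no further constraint (e.g.\ forcing a particular $\Q^\times$-square class for $\kappa^2$ beyond being a norm from $K^\times$ of the purely imaginary form) sneaks in or is inadvertently dropped.
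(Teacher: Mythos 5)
Your outline is the same route the paper takes: modularity of $\theta^{(4)}$ by checking $\mathcal{D}_4\theta^{(4)}=0$ via \cite[section 7]{pollackQDS} (or Loke/Gross--Wallach), the coefficient formula $a(\omega)=\sum_{\eta\in B^2}a_{\theta_{Gan}}(\omega+\eta)$ from the second Tits construction of \cite[section 7]{pollackLL}, nonnegativity of the summands to rule out cancellation, and the distinguished condition coming from asking when a rank one element of $W_J$ can project to a rank four element of $W_{J_K}$. Two remarks on the steps you leave open. First, the ``main obstacle'' you anticipate is not an obstacle at all: the statement that a rank one $\omega+\eta\in W_J$ with $\omega\in W_{J_K}$ of rank four forces $q(\omega)=\kappa^2$ with $\kappa^*=-\kappa$ is exactly \cite[Theorem 7.3.1 part (1)]{pollackLL} (the analogue in the Tits-second-construction setting of the Theorem 8.1.4 you cite), so no by-hand unwinding of the rank one minors or sign bookkeeping is needed, and no extra constraint appears beyond the one in the cited theorem.

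Second, the nonvanishing of a rank four coefficient is not automatic from ``exhibit any rank one $\omega'$ with rank four projection'': that element has to be produced, and this is the only genuinely computational content of this part of the proof. The paper does it explicitly for the choice $\lambda=1$, $S=1_3$: with $K=\Q(\kappa)$, $\kappa^*=-\kappa$, take $\omega=\left(0,1,0,\tfrac{\kappa^2}{4}\right)\in W_{J_K}$ (rank four) and $\eta=\langle 1,-\tfrac{\kappa}{2}\rangle\in B^2$; then $\omega+\eta=\left(0,(1,1),(0,\tfrac{\kappa}{2}),\tfrac{\kappa^2}{4}\right)$ is rank one in $W_J$, so the corresponding summand $a_{\theta_{Gan}}(\omega+\eta)$ is positive and $a(\omega)\neq 0$ (choosing $\kappa$ so that the element is integral). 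Until you supply such an element, your argument for the ``nonzero rank four Fourier coefficients'' clause is incomplete; with it, and with the citation above replacing your projected computation, your proof coincides with the paper's.
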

Again, because the quaternionic adjoint group is connected \cite{thang}, there is literally no difference between the archimedean theory for the simply connected quaternionic $E_6$ and the adjoint form.  Thus, the definitions and results of \cite{pollackQDS}--which were proved in the adjoint case--apply immediately to the group $E_K$.
\begin{proof}[Proof of Proposition \ref{prop:theta4}] To see that $\theta^{(4)}$ is a modular form of weight $4$, again it suffices to check that $\mathcal{D}_4 \theta^{(4)} = 0$, which may be done by, e.g., applying the results of \cite[section 7]{pollackQDS}.  Alternatively, one can apply results of \cite{loke2}. The Fourier coefficients of $\theta^{(4)}$ are controlled by \cite[Theorem 7.3.1]{pollackLL}, which gives the result.

More precisely, suppose $\omega \in W_{J_K}$, and denote by $a(\omega)$ the Fourier coefficient of $\theta^{(4)}$ associated to $\omega$.  As explained in \cite[section 7]{pollackLL}, if $\omega \in W_{J_K}$ and $\eta \in B^2$, then $\omega + \eta$ can be regarded as an element of $W_J$, by applying the second Tits construction.  Then for $\omega \neq 0$,
\[a(\omega) = \sum_{\eta \in B^2}{a_{\theta_{Gan}}(\omega + \eta)}.\]
Again, the sum is only has finitely many nonzero terms.  It follows immediately from \cite[Theorem 7.3.1 part (1)]{pollackLL} that for $a(\omega)$ to be nonzero and $\omega$ rank four, we need $q(\omega) = \kappa^2$ for some $\kappa \in K^\times$ with $\kappa^* = -\kappa$.

To see that $\theta^{(4)}$ has nonzero rank four Fourier coefficients for our particular choice $\lambda = 1$, $S =1$, one may proceed as follows.  Suppose $K = \Q(\kappa)$ with $\kappa^*=-\kappa$.  Then set $\eta = \langle 1, -\frac{\kappa}{2}\rangle$ and $\omega = \left(0,1,0, \frac{\kappa^2}{4}\right)$.  Then
\[\omega + \eta = \left(0,(1, 1), (0, \frac{\kappa}{2}), \frac{\kappa^2}{4}\right)\]
is rank one in $W_J$. As $\omega$ is rank $4$, this completes the proof.  \end{proof}

\subsection{The integral modular form on $G_2$}\label{subsec:integralG2} Recall from above that $\epsilon$ denotes the automorphic function on $F_4^{an}$ that takes on just two values and is orthogonal to the constant function \cite{ganGross,ganGrossSavin}.  More precisely, as mentioned above and following \cite{elkiesGrossIMRN}, \cite{ganGross}, \cite{ganGrossSavin}, one has
\[\# F_4^{an}(\Q)\backslash F_4^{an}(\A)/F_4^{an}(\widehat{\Z})F_4^{an}(\R) = 2,\]
where $F_4^{an}$ is the stabilzer of the element $I=1_3 \in H_3(\Theta_0)$.  The two double cosets we denote by $U_I$ and $U_E$.  Here
\[E = \left(\begin{array}{ccc} 2 & \beta & \beta^* \\ \beta^* & 2 & \beta \\ \beta & \beta^* & 2 \end{array}\right)\]
with $\beta = \frac{1}{2}(-1 + e_1 + e_2 + e_3 + e_4 + e_5 + e_6 + e_7) \in \Theta_{0}$.  See e.g. \cite{elkiesGrossIMRN}.  The set $U_I$  has measure $\frac{91}{691}$ and the set $U_E$ has measure $\frac{600}{691}$ \cite{ganGross}. The function $\epsilon$ takes the value $\frac{691}{91}$ on $U_I$ and value $-\frac{691}{600}$ on $U_E$.

Denote by $\Delta$ Ramanujan's elliptic modular cusp form of weight $12$, and recall that we set 
\[F_{\Delta}(g) = \int_{F_4^{an}(\Q)\backslash F_4^{an}(\A)}{\theta_{Gan}((g,h))\epsilon(h)\,dh},\]
the $\theta$-lift of $\epsilon$ to $G_2$. The rank four Fourier coefficients of $F_{\Delta}$ are discussed in \cite{ganGrossSavin}, and it is explained there that $F_{\Delta}$ is a level one, weight $4$ modular form on $G_2$.  The constant term is essentially in \cite{ganGross, elkiesGrossIMRN}. Thus much of the following result is contained in \cite{ganGrossSavin} and \cite{ganGross}.

To state the Fourier expansion of $F_{\Delta}$, we make some notations.  Suppose $\omega_0 \in W_{F} = Sym^3(V_2)$.  As in \cite{ganSW}, define
\[\Omega_{I}(\omega_0) = \left\{(a,b,c,d) \in W_J^{rk=1} = (F \oplus J \oplus J^\vee \oplus F)^{rk=1}: \left(a,\frac{(b,I^\#)}{3},\frac{(c,I)}{3},d\right) = \omega_0\right\}\]
and similarly define
\[\Omega_{E}(\omega_0) = \left\{(a,b,c,d) \in W_J^{rk=1} = (F \oplus J \oplus J^\vee \oplus F)^{rk=1}: \left(a,\frac{(b,E^\#)}{3},\frac{(c,E)}{3},d\right) = \omega_0\right\}.\]
(In our normalization, $\omega_0$ is integral if it has coefficients in $\Z \oplus \frac{\Z}{3} \oplus \frac{\Z}{3} \oplus \Z$.)

The following is an immediate corollary of Theorem \ref{thm:thetaE8} and what has been said above.
\begin{corollary} The weight $4$, level one modular form $F_{\Delta}$ on $G_2$ is nonzero and has rational Fourier coefficients with bounded denominators.  If $\theta_{Gan}$ is normalized to have $a_{\theta}((1,0,0,0)) = 1$, then the constant term of $F_{\Delta}$ is
\[F_{\Delta,00}(g) = 24 |\nu(g)|^{5}\left(\Phi_\Delta(g) \frac{x^8}{8!} + \Phi_\Delta'(g) \frac{y^8}{8!}\right).\]
Here $\Phi_\Delta$ is the automorphic form on $\GL_2$ with\footnote{We are here using the definition of the automorphy factor $j(g,i)$ from \cite{pollackQDS}.  This automorphy factor is essentially the cube of the usual automorphy factor on $\GL_2$.} $j(g,i)^{4}\Phi_{\Delta}(g)$ equal to Ramanujan's $\Delta$ function.  Moreover, for $\omega_0$ as above,
\[a_{F_{\Delta}}(\omega_0) = \left(\sum_{\omega \in \Omega_{I}(\omega_0)}{\sigma_{4}(\Delta(\omega))}\right) - \left(\sum_{\omega \in \Omega_{E}(\omega_0)}{\sigma_4(\Delta(\omega))}\right).\]
\end{corollary}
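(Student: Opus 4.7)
The starting point is to collapse the adelic integral defining $F_{\Delta}$ to a finite sum. Because $\theta_{Gan}$ is spherical at every finite place, $\theta_{Gan}((g,\cdot))$ is right-invariant under $F_4^{an}(\widehat{\Z})$; and because $F_4^{an}(\R)$ is compact and embeds into the $E_7^c$-factor of $K_\infty^{E_8}$, which acts trivially on $\Vm_4 = Sym^{8}(V_2)$, the function $\theta_{Gan}((g,\cdot))$ is also constant on $F_4^{an}(\R)$-cosets. Combined with the two-valued nature of $\epsilon$ and the measures of $U_I,U_E$, the defining integral simplifies to
\[F_{\Delta}(g)\;=\;\theta_{Gan}((g,h_I))\;-\;\theta_{Gan}((g,h_E)),\]
where $h_I$ (resp.\ $h_E$) is any representative of the double coset corresponding to $I$ (resp.\ $E$). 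Here I use that $\epsilon(h_I)\cdot\mathrm{meas}(U_I) = 1$ and $\epsilon(h_E)\cdot\mathrm{meas}(U_E)=-1$.

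Next, I would pull back the Fourier expansion of $\theta_{Gan}$ given in Theorem \ref{thm:thetaAgain} along the two embeddings $G_2 \hookrightarrow E_8$ determined by $h_I,h_E$. Each of these embeddings realizes $G_2$ as the subgroup of $G_J$ commuting with the corresponding copy of $F_4^{an}$; its Heisenberg parabolic sits inside $P_J$, and the identification $W_F \hookrightarrow W_J$ is given explicitly by the maps $(a,b,c,d)\mapsto (a,(b,I^\#)/3,(c,I)/3,d)$ and its $E$-analogue. Under these pullbacks, the rank one Fourier coefficient of $\theta_{Gan}$ attached to $\omega\in W_J(\Z)$ contributes to the Fourier coefficient on $G_2$ attached to the image $\omega_0\in W_F$. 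Summing over the (finitely many) $\omega$ in $\Omega_I(\omega_0)$ and $\Omega_E(\omega_0)$ respectively, with the normalization $a_\theta((1,0,0,0))=1$ (which multiplies the coefficient in Theorem \ref{thm:thetaAgain} by $24$), gives exactly
\[a_{F_\Delta}(\omega_0) \;=\; \sum_{\omega\in\Omega_I(\omega_0)}\sigma_4(\Delta(\omega)) \;-\; \sum_{\omega\in\Omega_E(\omega_0)}\sigma_4(\Delta(\omega)).\]

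For the constant term, I would examine the two pieces of $\theta_{Gan,00}$ separately. The $\frac{\zeta(5)}{\pi^4 2^5}\frac{x^4y^4}{4!4!}$ piece is independent of the $F_4^{an}$-argument (it is just a numerical constant times $|\nu(g)|^5$), so it cancels in the difference $\theta_{Gan}((g,h_I))-\theta_{Gan}((g,h_E))$. The remaining piece, the $\Phi_{Kim}$ term, pulls back to $\GL_2$ via the two different embeddings into $H_J=GE_7$; the difference of these two pullbacks is, by the results of \cite{ganGross,elkiesGrossIMRN}, proportional to Ramanujan's $\Delta$. The normalization works out (after accounting for the factor $24$ from rescaling $\theta_{Gan}$ and the factor $\frac13$ in front of $\Phi_{Kim}$) to give the stated constant $24|\nu(g)|^5(\Phi_\Delta \frac{x^8}{8!}+\Phi_\Delta' \frac{y^8}{8!})$.

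Finally, rationality and bounded denominators follow from the explicit form of the Fourier expansion: each rank one coefficient is an integer difference of two finite sums of $\sigma_4(\Delta(\omega))$, which are integers, so the nonconstant Fourier coefficients are integers. Nonzeroness of $F_\Delta$ is clear once any rank one $\omega_0$ is exhibited with distinct $\Omega_I$- and $\Omega_E$-contributions, and in fact follows from the nontriviality of the constant term together with $\Delta\neq 0$. The main potential difficulty is the identification of the $\Phi_{Kim}$-difference with $\Phi_\Delta$ and the verification of the precise scalar $24$; however, the qualitative identification is already in \cite{ganGross,elkiesGrossIMRN}, and the scalar is pinned down by matching a single rank one coefficient against the explicit formula of Theorem \ref{thm:thetaAgain}.
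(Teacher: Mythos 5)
The statement's "normalize $a_\theta((1,0,0,0))=1$" multiplies by $24$, and the corollary claims the constant term is $24|\nu(g)|^5(\Phi_\Delta \frac{x^8}{8!}+\Phi_\Delta'\frac{y^8}{8!})$, consistent with the factor $\frac{1}{3}\Phi_{Kim}$ in Theorem \ref{thm:thetaAgain} and the integral identity $\int_{[F_4^{an}]}\Phi_{Kim}((g,h))\epsilon(h)\,dh = 3\Phi_\Delta(g)$ cited in the paper.

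Now to your proposal. The overall architecture is sound and matches the paper's proof in its essentials: the $\zeta(5)$-term drops because $\epsilon$ is orthogonal to constants, the nonconstant coefficients are differences of finite sums of $\sigma_4(\Delta(\omega))$ over $\Omega_I(\omega_0)$ and $\Omega_E(\omega_0)$, and the $\Phi_{Kim}$-part of the constant term is handled by the identity from \cite{elkiesGrossIMRN}. However, there are two points where your argument is either wrong in detail or weaker than what is needed.

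First, your collapse of the theta-integral to a two-term difference $\theta_{Gan}((g,h_I))-\theta_{Gan}((g,h_E))$ is not correct as stated. The measures are $\mathrm{meas}(U_I)=\frac{91}{691}$ and $\mathrm{meas}(U_E)=\frac{600}{691}$, and $\epsilon$ takes values $\frac{691}{91}$ and $-\frac{691}{600}$, so indeed $\epsilon(h_I)\,\mathrm{meas}(U_I)=1$ and $\epsilon(h_E)\,\mathrm{meas}(U_E)=-1$; but the integrand $\theta_{Gan}((g,h))$ is \emph{not} constant on the two double cosets. Right-invariance under $F_4^{an}(\widehat{\Z})F_4^{an}(\R)$ is a property of functions on the quotient; $\theta_{Gan}((g,\cdot))$ restricted to $F_4^{an}(\A)$ is left-invariant under $F_4^{an}(\Q)$ and invariant under the open compact and the archimedean factor on the right only up to the conjugation effect on the $G_2$-variable's Fourier data --- concretely, translating $h$ within a double coset moves which elements $\omega\in W_J$ pair trivially with the $G_2$-Fourier character, which is exactly why the coefficient $a_{F_\Delta}(\omega_0)$ is a sum over all of $\Omega_I(\omega_0)$ (respectively $\Omega_E(\omega_0)$) rather than a single evaluation at $h_I$ (respectively $h_E$). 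The correct procedure, implicit in the paper and in \cite{ganGrossSavin,ganSW}, is to unfold the Fourier coefficient of the integral: one integrates the character against $\theta_{Gan,0}((n(x)g,h))$ in $x$, and the $h$-integral over $[F_4^{an}]$ then produces, for each $\omega_0$, the orbit sums over $\Omega_I(\omega_0)$ and $\Omega_E(\omega_0)$ weighted by the values of $\epsilon$ times the stabilizer masses --- this is where the sets $\Omega_I,\Omega_E$ (defined via the pairing with $I^\#,I$ and $E^\#,E$) actually come from. Your two-point evaluation happens to produce a formula of the right shape only because you then silently reintroduce the sums over $\Omega_I(\omega_0)$ and $\Omega_E(\omega_0)$ in the next paragraph; as written, the two steps are inconsistent with each other.

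Second, on boundedness of denominators and nonvanishing: your claim that the nonconstant coefficients are integers is fine granted the expansion, but the paper's stated conclusion is "rational Fourier coefficients with bounded denominators," which also requires control of the constant term (the $\Delta$-coefficients are integers, and the scalar $24$ and the factorial denominators are fixed), and nonvanishing is cleanest exactly as you say at the end --- from $\Delta\neq 0$ in the constant term --- so that part is acceptable. But you should not present the identification of the $\Phi_{Kim}$-difference with $3\Phi_\Delta$ as merely "proportional"; the precise constant $3$ from \cite{elkiesGrossIMRN} is what makes the scalar $24$ come out, and with the orbit-sum (not two-point) form of the lift the bookkeeping is: the theorem's constant-term coefficient $\frac13\Phi_{Kim}$, the rescaling by $24$, and the identity $\int_{[F_4^{an}]}\Phi_{Kim}((g,h))\epsilon(h)\,dh=3\Phi_\Delta(g)$ combine to give $24\cdot\frac13\cdot 3=24$. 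Fix the unfolding step and state the constant-term identity with its exact constant, and your argument agrees with the paper's.
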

\begin{proof} The key point is that the $\zeta(5)$ term drops out, because $\epsilon$ is orthogonal to the constant functions.  Everything else follows immediately from what has been said, and the fact \cite{elkiesGrossIMRN} that
\[\int_{F_4^{an}(\Q)\backslash F_4^{an}(\A)}{\Phi_{Kim}((g,h))\epsilon(h)\,dh} = 3\Phi_{\Delta}(g).\]
\end{proof}

\section{Fourier expansion of the Heisenberg Eisenstein series}\label{sec:FEeis} In this section, we prove results about the degenerate Heisenberg Eisenstein series on $G_J$.  In the first subsection, we give an abstract discussion of its Fourier expansion, not utilizing any of the archimedean results of \cite{pollackQDS}.  In the second subsection, we analyze the Fourier expansions of the special values of $E_J(g,s,\Phi;n)$ at $s=n+1$ when this point is in the range of absolute convergence for the Eisenstein series.  It is proved that the Fourier coefficients are Euler products.  Finally, in the third and fourth subsections, we analyze the constant term and rank one Fourier coefficients of these Eisenstein series directly. 

\subsection{Abstract Fourier expansion} In this subsection, we give the ``abstract'' Fourier expansion of a Heisenberg Eisenstein series $E(g,s)$.  Thus, we assume that $E(g,s) = \sum_{\gamma \in P(F)\backslash G(F)}{f(\gamma g,s)}$ for $f(g,s)$ a section in $Ind_P^G(|\nu|^{s})$, but we do not assume anything special about this section.

\begin{lemma} Suppose that $J \neq F$, so that $G = G_J$ is not $G_2$.  Then $P(F) \backslash G(F)/ P(F)$ has five elements, represented by $\{w_0=1,w_1, w_2, w_3, w_4\}$ with $w_1(E_{13}) \in e \otimes W_J$, $w_2(E_{13}) \in \h(J)^{0}$, $w_3(E_{13}) \in f \otimes W_J$ and $w_4(E_{13}) = E_{31}$.  The Lie algebra elements $w_1(E_{13})$ and $w_{3}(E_{13})$ are rank one when considered in $W_J$, and $w_2(E_{13})$ in the minimal orbit in $\h(J)^0$.  If $J = F$ so that $G_J = G_2$, then the double coset $P(F)\backslash G(F)\slash P(F)$ has four elements, and is represented by $\{w_0=1,w_1, w_3, w_4\}$, with these $w_i$ as above.\end{lemma}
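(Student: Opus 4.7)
The plan is to identify $P(F)\backslash G(F)/P(F)$ with $P$-orbits on $G/P \cong G\cdot[E_{13}]$, where $[E_{13}]$ denotes the line $F\cdot E_{13}$ in $\g(J)$. Since $E_{13}$ is a highest root vector, the $G$-orbit of $[E_{13}]$ is the projectivization of the minimal nilpotent orbit $\mathcal{O}_{\min}\subset \g(J)$. A double coset $P w P$ corresponds to the $P$-orbit of the line $w\cdot [E_{13}]$, so I need to enumerate $P$-orbits on $\mathbb{P}(\mathcal{O}_{\min})$.

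The tool is the 5-grading coming from the Heisenberg parabolic, $\g(J)=\g_{-2}\oplus\g_{-1}\oplus\g_0\oplus\g_1\oplus\g_2$, with $\g_{\pm 2}=FE_{13\text{ or }31}$, $\g_{\pm 1}\simeq W_J$ (these are the two copies of $W_J$ appearing in $V_2\otimes W_J$), and $\g_0=\h(J)^0\oplus F H$ where $H$ is the grading element. For a nonzero $X\in\mathcal{O}_{\min}$ write $X=\sum X_i$ according to this grading and let $i(X)$ be the smallest index with $X_i\neq 0$. Since $H$ preserves each $\g_i$ and $\mathfrak{n}=\g_1\oplus\g_2$ has strictly positive weights, the lowest-weight component $X_{i(X)}$ transforms equivariantly under $H$ and is not changed by $N$; thus $i(X)$ is a $P$-invariant.

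Next I would show that within each value $i\in\{-2,-1,0,1,2\}$, the lowest-weight component can be cleaned up by the $P$-action to land entirely in $\g_i$, and that the resulting set of representatives constitutes a single $H$-orbit. For $i=2$ and $i=-2$ this is trivial: $\mathcal{O}_{\min}\cap\g_{\pm 2}=F^\times E_{13\text{ or }31}$, which is a single line. For $i=\pm 1$, the intersection $\mathcal{O}_{\min}\cap\g_{\pm 1}$ consists of rank-one vectors in $W_J$, and the fact from Freudenthal theory that $H_J$ acts transitively on the rank-one locus of $W_J$ gives a single orbit; the $N$-action can then kill the higher-weight parts using the bracket relation $[\g_1,w]=\g_2$ for rank-one $w$ (and analogously for $\g_{-1}$ after using a Weyl element from $H$). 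For $i=0$, $\mathcal{O}_{\min}\cap\g_0$ consists of the minimal nilpotent orbit in $\h(J)^0$, which is a single $H_J$-orbit when $\h(J)^0$ is a simple exceptional Lie algebra (i.e., when $J\neq F$).

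The remaining case, and the main point to check carefully, is $J=F$: here $\h(J)^0\simeq\sl_2$ is the short-root $\sl_2$ inside $G_2$, while $\mathcal{O}_{\min}$ is the orbit of long root vectors. Since short and long nilpotents lie in distinct $G_2(F)$-orbits, one has $\mathcal{O}_{\min}\cap\h(J)^0=\emptyset$, which removes the representative $w_2$ and leaves exactly four double cosets. The main obstacle is verifying that the $N$-action indeed collapses each lowest-weight stratum onto its pure graded piece; this requires checking surjectivity of the map $[\cdot,X_{i(X)}]\colon \g_{\geq 1}\to \g_{>i(X)}$ modulo prior graded pieces, which follows from the explicit bracket relations in $\g(J)$ together with the minimality of $X_{i(X)}$.
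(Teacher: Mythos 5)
Your overall skeleton is sound and close in spirit to the paper's: identify $P(F)\backslash G(F)/P(F)$ with $P(F)$-orbits on the minimal lines $G(F)\cdot F E_{13}$, use the $5$-grading attached to the Heisenberg parabolic, and observe that the lowest nonzero graded component of a minimal element is a $P$-invariant (and, by a cocharacter-limit argument you leave implicit, is itself a minimal element, hence rank one when it lies in $\g_{\pm1}\simeq W_J$). The genuine gap is in the "cleaning up" step. You justify collapsing each stratum onto a pure representative by "surjectivity of $[\cdot,X_{i(X)}]\colon \g_{\geq 1}\to\g_{>i(X)}$ modulo prior graded pieces", but this is false for $i(X)=-1$ and $i(X)=0$: for $i(X)=-1$ the image $[\g_1\oplus\g_2,X_{-1}]$ has dimension at most $\dim W_J+1$, far smaller than $\dim(\g_0\oplus\g_1\oplus\g_2)$, and for $i(X)=0$ the map $u\mapsto[u,X_0]$ on $\g_1\simeq W_J$ is the (nilpotent) action of $X_0$ and is never surjective. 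So one cannot kill the higher components of an \emph{arbitrary} element with the given lowest term; what makes the statement true is minimality of the full element $X$, whose identity $[X,[X,y]]+2B_{\g}(X,y)X=0$ constrains the higher graded components to lie in the relevant images. You only invoke minimality of $X_{i(X)}$, so as written the reduction does not go through. The paper sidesteps this entirely: by the Bruhat decomposition the double cosets admit representatives normalizing a maximal torus, so each $w_i(E_{13})$ is automatically a weight vector for $h$, i.e., pure of a single degree; one then only has to decide which pure elements are minimal and check Levi-transitivity on each type.

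Two secondary points. First, your claim that the degree-zero stratum is a single orbit "because $\h(J)^0$ is a simple exceptional Lie algebra when $J\neq F$" is inaccurate ($\h(J)^0$ is of type $C_3$, $A_5$, $D_6$ or $E_7$ according to $C$; it is exceptional only for $G=E_8$) and in any case insufficient over a non-algebraically-closed field, where a geometrically minimal orbit may split into several rational orbits; one must actually prove $H_J(F)$-transitivity on the elements of $\h(J)^0$ that are minimal in $\g(J)$, which the paper does by identifying them as $H_J(F)\,n_L(e_{11})$. You also need to exclude a component along the grading element in degree zero (the paper does this by taking $y=E_{13}$ in the minimality identity), rather than asserting $\mathcal{O}_{\min}\cap\g_0\subseteq\h(J)^0$. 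On the other hand, your $G_2$ observation that $\mathcal{O}_{\min}\cap\h(J)^0=\emptyset$ because short-root nilpotents are not long-root nilpotents is a correct and clean way to see why $w_2$ disappears, and is a legitimate alternative to the paper's remark that $W_{J=F}$ has no rank two elements.
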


\begin{proof} The orbit $G(F) E_{13}$ consists of the elements $X \in \g_J(F)$ spanning a minimal line; these are the set of nonzero $X \in \g_J(F)$ satisfying $[X,[X,y]] + 2B_{g}(X,y)X = 0$ for all $y \in \g(J)$.  The double coset $P(F) \backslash G(F) \slash P(F)$ is thus identified with the $P(F)$-orbits on the minimal lines in $\g_J$.

Denote by $h$ the element $\mm{1}{0}{0}{-1} \in \sl_2 \subseteq \g(J)$. By the Bruhat decomposition, the coset representatives $w_i$ for $P(F) \backslash G(F) \slash P(F)$ can be chosen to be normalizers of a maximal torus; thus we may assume that the vectors $w_i(E_{13})$ are eigenvectors for the one-parameter subgroup with Lie algebra spanned by $h$.  Consequently, we may assume that the elements $w_i(E_{13})$ only have one nonzero component of the $5$-grading on $\g(J)$ determined by the Levi subgroup $H_J$ of $P$; see \cite[section 4.3.1]{pollackQDS}.  In particular, representatives can taken to be in $F E_{13}$, $e \otimes W_J$, $F h \oplus \h(J)^0$, $f \otimes W_J$, and $F E_{31}$.  

It is easy to see that there exists $w_1, w_3,$ and $w_4$ as in the statement of the lemma, and that furthermore that there is one $M(F)$-orbit of such $w_i$'s.  Thus, there are at least four $P(F)$-orbits on the minimal lines in $\g_J(F)$ in all cases.  When $J=F$ so that $G_J = G_2$, these are the only orbits.  One can verify this by hand--it is because there are no rank two elements in $W_{J=F}$--or see, e.g., \cite[equation (9)]{jiangRallis}.

Now suppose $J = H_3(C)$, so that $G$ is not $G_2$.  Suppose $X \in Fh + \h(J)^0$ spans a minimal line.  We claim that $X \in \h(J)^0$.  Indeed, write $X = \mu h + \phi \in F h \oplus \h(J)^0$. By taking $y = E_{13}$ in $[X,[X,y]] + 2B_{g}(X,y)X = 0$, one sees that $\mu =0$.  Thus $X \in \h(J)^0$ as claimed.  The elements so obtained in $\h(J)^0$ are the minimal elements of this Lie algebra, which are the elements $H_J(F) n_{L}(e_{11})$.  In particular, the group $H_J(F)$ acts transitively on them.  This completes the proof of the lemma. \end{proof}

Via the lemma, we have $E(g,s) = \sum_{i=0}^{4}{E_i(g,s)}$, with $E_i(g,s) = \sum_{\gamma \in P(F)\backslash P(F)w_iP(F)}{f(\gamma g,s)}$.  Thus, $E_0(g,s) = f(g)$. For $G=G_2$, we understand $E_2(g,s) = 0$. We now write out more explicit expressions for the other $E_i$.

Recall that if $\ell \in W_J$ is a rank one line, there is associated to it a flag
\[W_J \supseteq (\ell)^{\perp} \supseteq W(\ell) \supseteq \ell \supseteq 0\]
with $W(\ell)$ a certain maximal isotropic subspace.  Precisely,
\[W(\ell) = \{ x \in W_J: \langle x,\ell \rangle = 0 \text{ and } \Phi_{x,\ell} = 0\}.\]

\begin{lemma}\label{lem:EisSum} Assume that $Re(s) >>0$ so that the sum defining $E(g,s)$ converges absolutely.  Then one has the following expressions for the $E_i(g,s)$.
\begin{enumerate} 
\item For each rank one line $\ell$ in $e \otimes W_J$, select $\gamma(\ell) \in G(F)$ with $\gamma(\ell) E_{13} \in \ell$.  Then 
\[E_1(g,s) = \sum_{\ell \subseteq W_J^{rk=1}}\sum_{\mu \in (\ell)^{\perp} N_0(F)\backslash N(F)}{f(\gamma(\ell)^{-1}\mu g,s)}.\]
\item For each minimal line $F \phi \subseteq \h(J)^0$, select $\gamma(\phi) \in G(F)$ with $\gamma(\phi) E_{13} \in F\phi$.  Then
\[E_2(g,s) = \sum_{F \phi \subseteq \h(J)^0 \text{ minimal}}\sum_{\mu \in (\mathrm{ker}(\phi) N_0(F))\backslash N(F)}{f(\gamma(\phi)^{-1}\mu g,s)}.\]
\item For each minimal line $F \ell \in f \otimes W_J$, select $\gamma(\ell) \in G(F)$ with $\gamma(\ell) E_{13} \in \ell$.  Then
\[E_3(g,s) = \sum_{\ell \subseteq W_J^{rk=1}}\sum_{\mu \in W(\ell)\backslash N(F)}{f(\gamma(\ell)^{-1} \mu g,s)}.\]
\item One has
\[E_4(g,s) = \sum_{\mu \in N(F)}{f(w_4^{-1} \mu g,s)}.\]
\end{enumerate} \end{lemma}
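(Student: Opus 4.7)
The plan is to use the Bruhat stratification from the preceding lemma: since the sum defining $E(g,s)$ is absolutely convergent for $\mathrm{Re}(s)\gg 0$, I can freely rearrange and write $E(g,s) = \sum_{i=0}^4 E_i(g,s)$ with $E_i(g,s) = \sum_{\gamma \in P(F)\backslash P(F)w_iP(F)} f(\gamma g, s)$. For each $i$, I would parametrize $P(F)\backslash P(F)w_iP(F)$ by enumerating the minimal lines in $\g_J(F)$ in the associated $P(F)$-orbit, using the identification
\[
P(F)\backslash G(F) \;\longleftrightarrow\; \{\text{minimal lines in } \g_J(F)\}, \qquad P\gamma \;\longmapsto\; F\gamma^{-1}E_{13}.
\]
Under this, $P\backslash Pw_iP$ corresponds to the $P$-orbit of $L_i := Fw_iE_{13}$, giving
\[
E_i(g,s) \;=\; \sum_{L \in P\cdot L_i} f(\gamma_L g, s)
\]
for any choice of $\gamma_L \in G$ with $\gamma_L^{-1}E_{13}\in L$.

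The next step is to decompose each $P$-orbit further as a disjoint union of $N$-orbits $\bigsqcup N\cdot L(\star)$ indexed by an $H_J$-orbit on base lines. For $i=1$ and $i=3$ one uses the classical fact that $H_J(F)$ acts transitively on rank one lines in $W_J$; for $i=2$ one uses the corresponding transitivity on minimal lines in $\h(J)^0$, established in the preceding lemma. Disjointness of the resulting $N$-orbits is immediate because $\mathrm{ad}(u)$ strictly raises the grading for $u \in \mathrm{Lie}(N) \subseteq \g_{\geq 1}$, so the lowest-graded component of any $v \in N\cdot L(\star)$ still lies along the base vector and recovers the index $\star$. For $i=4$, one has $\mathrm{Stab}_P(FE_{31}) = P \cap P^- = H_J$, so $P\cdot L_4 = N\cdot L_4$ is already a single $N$-orbit.

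The main calculational step is to identify $\mathrm{Stab}_{N(F)}(L)$ for each representative. Writing $u = e\otimes w_1 + cE_{13} \in \mathrm{Lie}(N)$ and exploiting that $\mathrm{Ad}(\exp u) = \sum_k \tfrac{1}{k!}\mathrm{ad}(u)^k$ truncates in the $5$-grading, one finds:
\begin{itemize}
\item[(1)] $\mathrm{Ad}(\exp u)(e\otimes w_0) = e\otimes w_0 + \langle w_1,w_0\rangle E_{13}$, yielding $\mathrm{Stab}_N = \ell^\perp N_0$;
\item[(2)] $\mathrm{Ad}(\exp u)\phi = \phi + e\otimes(\phi w_1) + (\text{term in }FE_{13})$, forcing $\phi w_1 = 0$ and giving $\mathrm{Stab}_N = \ker(\phi)\,N_0$;
\item[(3)] $\mathrm{Ad}(\exp u)(f\otimes w_0)$ picks up a $\g_0$-component depending on $\langle w_1,w_0\rangle$ and $\Phi_{w_1,w_0}$, and a $\g_1$-component proportional to $c\cdot e\otimes w_0$, forcing $w_1 \in W(\ell)$ and $c=0$, so $\mathrm{Stab}_N = W(\ell)$ with no $N_0$ factor;
\item[(4)] $\mathrm{Stab}_N(FE_{31}) = N \cap P^- = \{1\}$.
\end{itemize}
Setting $\gamma_L = \gamma(\ell)^{-1}\mu^{-1}$ (or the analogue in the other parts) for $L$ in the $\mu$-translate of the base line, and applying the bijection $\mu \leftrightarrow \mu^{-1}$ between left and right cosets of the stabilizer, recovers the formulas in the lemma.

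The step I expect to be the main obstacle is the stabilizer computation in part (3): unlike parts (1) and (2), the center $N_0$ does \emph{not} preserve $F(f\otimes w_0)$, because $[E_{13}, f\otimes w_0]$ is a nonzero element of $e\otimes W_J$ coming from the $\sl_2$-bracket. One therefore has to track both the $\g_0$-vanishing condition (forcing $w_1 \in W(\ell)$) and the independent $\g_1$-vanishing condition $c=0$. This is what produces the characteristic asymmetry between the $w_1$- and $w_3$-cells in the final formula.
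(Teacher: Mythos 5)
Your proposal is correct and follows essentially the same route as the paper: decompose $E(g,s)$ by Bruhat cells, identify $P(F)\backslash G(F)$ with minimal lines, use transitivity of $H_J(F)$ on rank one lines in $W_J$ (and on minimal lines in $\h(J)^0$), and compute the $N(F)$-stabilizers by the Lie bracket, yielding $\ell^\perp N_0$, $\ker(\phi)N_0$, $W(\ell)$, and $\{1\}$ exactly as in the paper. The only nitpick is in part (3): the $\g_1$-component of $\mathrm{Ad}(\exp u)(f\otimes w_0)$ also contains the second-order term $\tfrac{1}{2}[e\otimes w_1,[e\otimes w_1,f\otimes w_0]]$, not just $c\,e\otimes w_0$, but this term vanishes once the $\g_0$-condition $w_1\in W(\ell)$ is imposed, so your conclusion is unaffected.
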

\begin{proof} The expression for $E_4(g,s)$ is clear.  The rest follows easily from what has already been said.  The only thing that must still be computed are the stabilizers in $N(F)$ of the minimal lines in $e \otimes W_J$, $\h(J)^0$, and $f \otimes W_J$.  And for this, it suffices to work on the level of Lie algebras.  We write $n = e \otimes x + c E_{13}$ for a typical element of the Lie algebra of $N(F)$.

We separate into cases.  For $E_1(g,s)$, write a typical rank one element of $e \otimes W_J$ as $e \otimes v$.  Then $[n, e \otimes v] = \langle x,v\rangle E_{13}$, thus verifying the expression for $E_1(g,s)$.  For $E_2(g,s)$, suppose that $\phi \in \h(J)^0$ spans a minimal line.  Then $[n, \phi] = [e \otimes x, \phi] = e \otimes \phi(x)$.  This gives the stated expression for $E_2(g,s)$.  Finally, suppose $f \otimes v \in f \otimes W_J$ is a rank one element.  Then $[n, f \otimes v] = c e \otimes v + \left(\langle x,v \rangle \frac{ef}{2} + \frac{1}{2} \Phi_{x,v}\right)$.  Thus $[n, f \otimes v] =0$ if and only if $c = 0$ and $x \in W(\ell)$, where $\ell = Fv$.  This completes the proof of the lemma. \end{proof}

We now consider the Fourier expansions of the $E_i(g,s)$ along $N(F)$; because the $E_i(g,s)$ are $P(F)$-invariant, this makes sense.  Fix an additive character $\psi: F\backslash \A \rightarrow \C^\times$.  For $v \in W_J$, define $\chi_v: N(F)\backslash N(\A) \rightarrow \C^\times$ as $\chi_v(n) = \psi(\langle v, \overline{n}\rangle)$, where $\overline{n}$ denotes the image of $n \in N/[N,N] \simeq W_J$.  We set
\[E_i^v(g,s) = \int_{N(F)\backslash N(\A)}{\chi_v^{-1}(n)E_i(ng,s)\,dn}.\]
Our measure is normalized so that if $U$ is a closed algebraic subgroup of $N$, then $[U]:=U(F)\backslash U(\A)$ has volume $1$.

Recall that elements of $W_J$ have a \emph{rank}, which is $0,1,2,3$ or $4$.
\begin{lemma}\label{lem:rkiv} If $\mathrm{rank}(v) > i$, then $E_i^v(g,s) = 0$.\end{lemma}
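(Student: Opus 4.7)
The plan is to unfold each Fourier integral $E_i^v(g,s) = \int_{[N]} \chi_v^{-1}(n) E_i(ng,s)\,dn$ using the orbit-by-orbit decomposition of $E_i(g,s)$ provided by Lemma \ref{lem:EisSum}. In each case $E_i(g,s) = \sum_\ell \sum_{\mu \in M_\ell(F)\backslash N(F)} f(\gamma(\ell)^{-1}\mu g, s)$, where $\ell$ ranges over a set of orbit representatives (rank-one lines in $e\otimes W_J$ for $i=1$, minimal lines in $\h(J)^0$ for $i=2$, rank-one lines in $f\otimes W_J$ for $i=3$), and $M_\ell \subseteq N$ is the stabilizer of $\ell$ in $N$. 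Absorbing the inner $M_\ell(F)\backslash N(F)$-sum into the $[N]$-integral converts $E_i^v(g,s)$ into a sum of integrals over $M_\ell(F)\backslash N(\A)$, one per orbit representative.

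Next I would factor each such integral through $M_\ell(\A)\backslash N(\A)$. The key observation, following from the same commutator computations used in the proof of Lemma \ref{lem:EisSum} to identify the stabilizers, is that the stabilizer in $N$ of a line $\ell$ actually fixes any vector on $\ell$; consequently $\gamma(\ell)^{-1} M_\ell \gamma(\ell)$ fixes $E_{13}$ itself and lies in the kernel of $\nu: P \to \GL_1$. Therefore the integrand is left-$M_\ell(\A)$-invariant, and since $\chi_v$ is a character of the abelianization $N/N_0 \simeq W_J$, Fubini yields
\[
E_i^v(g,s) = \sum_\ell \left(\int_{[M_\ell]} \chi_v^{-1}(m)\,dm\right) \int_{M_\ell(\A)\backslash N(\A)} \chi_v^{-1}(n)\, f(\gamma(\ell)^{-1} n g, s)\,dn.
\]
The inner period over $[M_\ell]$ vanishes unless $\chi_v|_{M_\ell} \equiv 1$. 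Writing $\overline{M_\ell} \subseteq W_J$ for the image of $M_\ell$ under $N \to N/N_0$, this is equivalent to $v \in \overline{M_\ell}^\perp$ with respect to Freudenthal's symplectic form.

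Finally, I would translate the condition $v \in \overline{M_\ell}^\perp$ into the asserted rank bound, case by case. For $i=0$, $M = N$ forces $v=0$. For $i=1$, $\overline{M_\ell} = \ell^\perp$, so $v \in \ell$ and $\mathrm{rank}(v) \leq 1$. For $i=2$, $\overline{M_\phi} = \ker(\phi)$ forces $v$ to lie in the image of the minimal element $\phi$ acting on $W_J$, and the image of any such $\phi$ is spanned by rank-one vectors, giving $\mathrm{rank}(v) \leq 2$. For $i=3$, $\overline{M_\ell} = W(\ell)$ is Lagrangian and self-perpendicular, so $v \in W(\ell)$; a direct computation shows the quartic norm vanishes identically on $W(\ell)$, so $\mathrm{rank}(v) \leq 3$. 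The case $i=4$ yields a vacuous bound. The main technical content is the rank identification in cases $i=2,3$; both are structural facts about the Freudenthal representation $W_J$ that reduce to direct computations with $\Phi_{w_1,w_2}$ and the quartic norm.
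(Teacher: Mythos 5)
Your argument is essentially the paper's own proof: the paper likewise unfolds $E_i^v$ via the expression in Lemma \ref{lem:EisSum}, factors the integral through the stabilizer $N(\ell)(\A)$ (left-invariance of the integrand holding because the stabilizer is unipotent and conjugates into $P$ with $\nu=1$), and concludes that the term attached to $\ell$ vanishes unless $\chi_v$ is trivial on $N(\ell)$, leaving the translation of this triviality into the rank bound as ``not hard to see.'' Your case-by-case verification of that last step is right in outline, with one caveat: in the $i=2$ case the inference ``the image of $\phi$ is spanned by rank-one vectors, hence $\mathrm{rank}(v)\le 2$'' is not valid as stated, since all of $W_J$ is spanned by rank-one vectors. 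What you actually need is that $\ker(\phi)^\perp=\mathrm{im}(\phi)$ consists entirely of elements of rank at most two; by $H_J(F)$-equivariance one may take $\phi=n_L(e_{11})$, for which this space is $(0,Fe_{11},e_{11}\times J,F)$, and a short computation (e.g.\ checking that the gradient of the quartic form, equivalently $t(v,v,v)$, vanishes identically there, using $e_{11}^\#=0$ and $c^\#\in F e_{11}$ for $c\in e_{11}\times J$) gives the bound. With that substitution your proposal coincides with the paper's argument.
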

\begin{proof} Let us write
\begin{equation}\label{eqn:EiSum1} E_i(g,s) = \sum_{\ell}\sum_{\mu \in N_{\ell}(F)\backslash N(F)}{f(\gamma(\ell)^{-1} \mu g,s)}\end{equation}
for the expression given in Lemma \ref{lem:EisSum}.  Then
\begin{align*} E_i^v(g,s) &= \sum_{\ell} \int_{[N]}{\chi_v(n)^{-1} \left( \sum_{\mu \in N_{\ell}(F)\backslash N(F)}{f(\gamma(\ell)^{-1} \mu n g,s)}\right)\,dn} \\ &= \sum_{\ell} \int_{N(\ell)(F)\backslash N(\A)}{\chi_v^{-1}(n) f(\gamma(\ell)^{-1} n g,s)\,dn} \\ &= \sum_{\ell} \int_{N(\ell)(\A)\backslash N(\A)}{\left(\int_{[N(\ell)]}{\chi_v^{-1}(r)\,dr}\right) \chi_v^{-1}(n) f(\gamma(\ell)^{-1} n g,s)\,dn} \\ &= \sum_{\ell, \chi_v|_{N(\ell)} =1}\int_{N(\ell)(\A)\backslash N(\A)}{\chi_v^{-1}(n) f(\gamma(\ell)^{-1} n g,s)\,dn}.\end{align*}

It follows that $E_i^v(g,s)$ vanishes if $\chi_v$ restricted to $N(\ell)$ is nontrivial for all $\ell$ appearing in the sum (\ref{eqn:EiSum1}).  If $\mathrm{rank}(v) > i$, then it is not hard to see that this indeed happens.\end{proof}

\begin{lemma} If $i = \mathrm{rank}(v)$, then $E_i^v(g,s)$ is Eulerian.  More precisely,
\begin{enumerate}
\item Suppose $v$ is rank one.  Define $N_v^1 = (Fv)^\perp N_0 \subseteq N$.  Then
\[E_1^v(g,s) = \int_{N^1_v(\A)\backslash N(\A)}{\chi_v^{-1}(n)f(\gamma(\ell_v)^{-1} n g,s)\,dn}.\]

\item Suppose $v$ is rank two.  Define $N^2_v =\mathrm{ker}(\Phi_{v,v})N_0 \subseteq N$, and take $\gamma(\Phi_{v,v})$ with $\gamma(\Phi_{v,v})E_{13} = \Phi_{v,v} \in \h(J)^0$. Then
\[\int_{(N^2_v)(\A)\backslash N(\A)}{f(\gamma(\phi)^{-1}ng,s)\chi_v^{-1}(n)\,dn}.\]

\item Suppose $v$ is rank three.  Recall the element $v^\flat =t(v,v,v)$ (e.g., \cite[section 3.4.2]{pollackQDS}), which is rank one because $v$ is rank three. Define $N^3_v = W(F v^\flat) \subseteq N$, and take $\gamma(v^\flat)$ with $\gamma(v^\flat)E_{13} = f \otimes v^\flat$.  Then
\[E_3^v(g,s) = \int_{N^3_v(\A)\backslash N(\A)}{\chi_v^{-1}(n)f(\gamma(v^\flat)^{-1} ng,s)\,dn}.\]

\item Suppose $v$ is rank four.  Then 
\[E_4^v(g,s) = \int_{N(\A)}{\chi_v^{-1}(n)f(w_4^{-1} n g,s)\,dn}.\]
\end{enumerate}
\end{lemma}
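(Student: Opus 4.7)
The plan is to build on the computation already carried out in the proof of Lemma~\ref{lem:rkiv}. That calculation shows that for any $v \in W_J$,
\[
E_i^v(g,s) = \sum_{\ell \,:\, \chi_v|_{N(\ell)} = 1} \int_{N(\ell)(\A)\backslash N(\A)} \chi_v^{-1}(n)\, f(\gamma(\ell)^{-1} n g, s)\,dn,
\]
where $\ell$ runs over the orbit representatives given in Lemma~\ref{lem:EisSum} and $N(\ell)$ denotes the corresponding stabilizer in $N$. Each integral on the right is manifestly Eulerian when $f$ is a factorizable section, so to prove the lemma it suffices to show that when $i = \mathrm{rank}(v)$ the sum contains exactly one nonzero term, and to identify which orbit representative $\ell$ produces it.

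Since $\chi_v$ is trivial on $N_0$ and $N/N_0 \simeq W_J$, the triviality of $\chi_v|_{N(\ell)}$ is equivalent to a linear condition on $W_J$, namely that $v$ pairs trivially, under Freudenthal's symplectic form, against the image of $N(\ell)$ in $W_J$. For $i=1$ this image is $\ell^\perp$, so the condition forces $v \in (\ell^\perp)^\perp = \ell$ and hence $\ell = F v$. For $i=4$ the stabilizer is trivial and the condition is vacuous, producing the stated integral over all of $N(\A)$. In both of these cases the surviving integral is exactly what the lemma claims.

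The cases $i=2$ and $i=3$ will be the main obstacle, as they require the Freudenthal-space structure of $W_J$. In case $i=2$, the image of $N(\phi)$ in $W_J$ is $\ker(\phi)$, and the condition becomes $v \in \ker(\phi)^\perp$; I would verify, using the identities for $\Phi_{w_1,w_2}$ recalled in \cite[section 3.4.2]{pollackQDS}, that this holds precisely when $F \phi = F \Phi_{v,v}$, noting that $\Phi_{v,v}$ is minimal exactly because $v$ is rank two. In case $i=3$, the stabilizer's image is the maximal isotropic $W(\ell)$, which equals its own symplectic perp, so the condition becomes $v \in W(\ell)$; I would then verify, using the formulas for $v^\flat$, that this holds precisely for $\ell = F v^\flat$. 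The uniqueness in each case reduces to the $H_J$-orbit classification of rank-two, respectively rank-three, elements of $W_J$. Once the unique $\ell$ is identified, substitution into the displayed formula gives the integral stated in the lemma, and the factorizability of $f$ delivers the Eulerian factorization.
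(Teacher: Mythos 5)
Your proposal follows essentially the same route as the paper's own proof: unfold as in the proof of Lemma \ref{lem:rkiv} to write $E_i^v$ as a sum of Eulerian integrals over the lines $\ell$ with $\chi_v$ trivial on $N(\ell)$, and then show that for $i=\mathrm{rank}(v)$ exactly one line survives, namely $Fv$, $F\Phi_{v,v}$, $Fv^\flat$, or (trivially) the big cell, which is precisely how the paper argues. The two uniqueness assertions you leave as ``I would verify'' (rank two: the only minimal line $F\phi$ with $v\in\ker(\phi)^\perp$ is $F\Phi_{v,v}$; rank three: the only rank-one line $\ell$ with $v\in W(\ell)$ is $Fv^\flat$) are stated in the paper with the same brevity, so your argument matches it in both structure and level of detail.
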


\begin{proof} The case of $v$ rank one and $v$ rank four follow immediately from Lemma \ref{lem:rkiv}.

Consider first the case of $v$ rank two.  For $\phi \in \h(J)^0$ spanning a minimal line, define $I(\phi) = \ker{\phi}^\perp$.  Then $\chi_v$ is $1$ on $\ker{\phi}(\A)$ if and only if $v \in I(\phi)$.  Thus we have
\[E_2^v(g,s) = \sum_{F \phi \text{  with } v \in I(\phi)}\int_{(\mathrm{ker}(\phi)N_0)(\A)\backslash N(\A)}{f(\gamma(\phi)^{-1}ng,s)\chi_v^{-1}(n)\,dn}.\]
If $v$ is rank two, the only minimal line $F\phi$ with $v \in I(\phi)$ is $F \Phi_{v,v}$, so in this case $E_2^v(g,s)$ is Eulerian, as in the statement of the lemma.

Now consider the case of $v$ rank three. One has that $\chi_v$ is $1$ on $W(\ell)(\A) \subseteq N(\A)$ if and only if $v \in W(\ell)^\perp = W(\ell)$.  Thus
\[E_3^v(g,s) = \sum_{\ell \text{ rank one with } v \in W(\ell)}\int_{W(\ell)(\A)\backslash N(\A)}{\chi_v^{-1}(n)f(\gamma(\ell)^{-1} n g,s)\,dn}.\]
Thus, if $v$ is rank three, then $F\ell = F v^\flat$, so the line $\ell$ is determined by $v$ and $E_3^v(g,s)$ is Eulerian as specified above.  This completes the proof of the lemma. \end{proof}

For the constant term $E_1^0(g,s)$, we record now that
\[E_1^0(g,s) = \sum_{\ell \subseteq W_J^{rk =1}}{\int_{N^1_{\ell}(\A)\backslash N(\A)}{f(\gamma(\ell)^{-1} ng,s)\,dn}}.\]
This function is as an Eisenstein series for the Levi subgroup $H_J$ attached to the parabolic that stabilizes a rank one line in $W_J$.

\subsection{Euler product} The purpose of this subsection is to prove that the special (modular form) values of the Heisenberg Eisenstein series have non-constant Fourier coefficients that are Euler products.  More precisely, for $v \in W_J$ set
\[E^v(g,s;n) = \int_{N(F)\backslash N(\A)}{\chi_v^{-1}(u)E(ug,s;n)\,du}.\]
We know from the previous section that if $v \in W_J$ is nonzero, then $E^v(g,s;n) = \sum_{i \geq rk(v)}{E_i^v(g,s;n)}$, and that the term $E_{rk(v)}^v(g,s;n)$ is an Euler product.  In this subsection we prove that if the special value $s=n+1$ is in the range of absolute convergence for the Eisenstein series, then the terms $E_i^v(g,s,n)$ vanish at this point when $i > \mathrm{rank}(v)$.  

\begin{proposition}\label{prop:Eisivan} Suppose $n$ is even, and $n > \dim(J)+1$ so that the sum defining $E(g,s;n)$ converges absolutely at $s=n+1$ and defines a modular form of weight $n$ at this point.  Then if $v \neq 0$ and $i > \mathrm{rank}(v)$, $E_{i}^{v}(g,s;n)$ vanishes at $s=n+1$.\end{proposition}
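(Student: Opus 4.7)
Plan: The strategy is to exploit the fact that, at $s = n+1$ in the range of absolute convergence, $E(g,s;n)$ is a modular form of weight $n$ on $G_J$, so that \cite{pollackQDS} rigidly constrains the archimedean shape of its Fourier coefficients along $N$; any contribution to $E^v$ beyond the single Eulerian term at $i = \mathrm{rank}(v)$ must therefore vanish for purely archimedean reasons. Absolute convergence for $n > \dim J + 1$ is immediate from the shape of $f(g,\Phi,s)$, and the archimedean section $\Phi_{\infty,n}$ was engineered exactly so that the resulting Eisenstein series is annihilated by $\mathcal{D}_n$ at $s=n+1$; this is essentially the content of the remarks after Corollary 1.2.4 of \cite{pollackQDS}.

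With modularity in hand, I would apply the refined Fourier expansion for modular forms from \cite{pollackQDS}: the $v$-th Fourier coefficient of any weight-$n$ modular form along $N$ vanishes unless $v \geq 0$, in which case it is a scalar multiple of $\chi_v(x)\Wh_{2\pi v}(g)$, with $\Wh_{2\pi v}$ the distinguished function built from $K$-Bessel functions. Combined with the abstract decomposition
\[ E^v(g,s;n) = \sum_{i \geq \mathrm{rank}(v)} E_i^v(g,s;n) \]
from the previous subsection, this pins down the total archimedean shape of $E^v$ at $s=n+1$.

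The heart of the proof is then the archimedean claim: for $i > \mathrm{rank}(v)$, the archimedean factor of each Eulerian summand of $E_i^v$ vanishes at $s=n+1$. Each such archimedean factor is an explicit integral of the form
\[ \int_{N(\ell)(\R)\backslash N(\R)} \chi_v^{-1}(n)\,\Phi_{\infty,n}\bigl(t\gamma(\ell)^{-1}ng\cdot E_{13}\bigr)|t|^s\,dt\,dn, \]
and when $i > \mathrm{rank}(v)$ the domain $N(\ell)\backslash N$ is strictly larger than the one producing a non-degenerate $\chi_v$-Whittaker integral. Intuitively, one is averaging a generalized Whittaker function over extra directions on which $\chi_v$ is trivial, and the resulting function lives in a different Whittaker model than $\Wh_{2\pi v}$. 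Combined with the uniqueness implicit in \cite{pollackQDS}---that is, the characterization of $\Wh_{2\pi v}$ as the essentially unique moderate-growth eigenvector for $\chi_v$ annihilated by $\mathcal{D}_n$---this forces $E_i^v(g,s=n+1;n)$ to vanish for $i > \mathrm{rank}(v)$.

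The main obstacle is making the archimedean vanishing statement rigorous: one needs either an explicit evaluation of the integral above (using the $K$-Bessel formulas developed in \cite{pollackQDS}) or an abstract archimedean argument exhibiting the relevant zero, most naturally via a Gamma-factor identity at $s=n+1$ or direct evaluation at a convenient base point $g \in H_J(\R)$. Once this archimedean step is complete, the rest of the proposition follows immediately.
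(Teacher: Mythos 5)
Your high-level plan (absolute convergence at $s=n+1$, modularity there, and then the rigidity of the Fourier expansion from \cite{pollackQDS} forcing the $i>\mathrm{rank}(v)$ contributions to die ``for archimedean reasons'') is indeed the paper's philosophy, but the decisive step is missing. Knowing that the full coefficient $E^v(g,s=n+1;n)=\sum_{i\geq \mathrm{rank}(v)}E_i^v(g,s=n+1;n)$ must be a multiple of $\Wh_{2\pi v}$ constrains only the \emph{sum}; an individual $E_i^v$ with $i>\mathrm{rank}(v)$ is not the Fourier coefficient of any modular form, so neither the shape theorem nor any ``uniqueness of the Whittaker model'' statement applies to it directly. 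Your assertion that the extra averaging puts $E_i^v$ ``in a different Whittaker model'' is exactly what has to be proved, and you give no invariant that distinguishes it from a multiple of $\Wh_{2\pi v}$; the fallback you mention (an explicit archimedean evaluation or a Gamma-factor zero at $s=n+1$) is left undone, and in fact the vanishing in the paper does not come from a zero of an archimedean integral at all.

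What the paper actually uses is a soft equivariance argument that supplies precisely the missing invariant. From the expansion of \cite{pollackQDS} it extracts only Corollary \ref{cor:FwEquiv}: if $\tilde m\omega=\omega$ then the $\omega$-coefficient of a weight $n$ modular form satisfies $F_\omega(mg)=\nu(m)^n|\nu(m)|F_\omega(g)$. On the other side, each term of $E_i^v$ is an explicit integral over $Y_i=N_\ell\backslash N$, and for torus elements $t$ (in an explicit four-dimensional torus $T\subseteq H_J$, or $T'$ for $G_2$) fixing $v$ and the relevant line, a change of variables gives a transformation by $C(t,Y_i,s)=J(t,Y_i)\,|\mu_{w E_{13}}(t)|^{s}$, with the Jacobians $J(t,Y_i)$ computed case by case; absolute convergence at $s=n+1$ is what licenses treating the terms individually. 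The proposition then reduces, for $i=4,3,2$ and $v$ of smaller rank, to exhibiting such $t$ with $\tilde t v=v$ for which $C(t,Y_i,n+1)\neq \nu(t)^n|\nu(t)|$, which comes down to elementary numerical conditions ($\dim(J)+1-n\neq n+1$, $3n\neq\dim(J)$, $2n\neq\dim(C)+2$) that hold because $n$ is even with $n>\dim(J)+1$ and $\dim(J)\in\{1,6,9,15,27\}$. Without some such character-separation under the stabilizer of $v$ (or an honest evaluation of the degenerate archimedean integrals, which the paper deliberately avoids), your argument does not close.
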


We will prove Proposition \ref{prop:Eisivan} by applying the following corollary of the main result of \cite{pollackQDS}.

\begin{corollary}\label{cor:FwEquiv} Suppose $\omega \in W_J$ is nonzero, $m \in H_J(\R)$, and $F$ is a modular form on $G_J$ of weight $n$.  Denote by $F_{\omega}$ the $\omega$ Fourier coefficient of $F$, and set $\tilde{m} = \nu(m) m^{-1}$.  Then if $\tilde{m} \omega = \omega$, $F_{\omega}(mg) = \nu(m)^{n} |\nu(m)| F_{\omega}(g)$.  For the constant term $F_{00}$, one has the following: $F_{00}$ is the sum of two terms $f_1$ and $f_2$, which are distinguished by the following properties.  For $z \in Z_{H}\simeq \GL_1(\R)$, one has $f_1(zg) = z^{2n+2} f_1(g)$ and $f_2(zg) = z^{n+2}f_2(g)$. \end{corollary}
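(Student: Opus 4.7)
The key input is the refined Fourier expansion recalled in the introduction: the $\omega$-Fourier coefficient of a weight $n$ modular form has the shape $F_\omega(g) = a_F(\omega)\mathcal{W}_{2\pi\omega}(g)$. (The coefficient vanishes unless $\omega\ge 0$, in which case the claim is trivial, so we may assume $\omega\ge 0$.) The claim then reduces to showing $\mathcal{W}_{2\pi\omega}(mg) = \nu(m)^n|\nu(m)|\mathcal{W}_{2\pi\omega}(g)$ whenever $\tilde m \omega = \omega$.

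The plan for this equivariance is to substitute $g\mapsto mg$ into the component formula
\[\mathcal{W}_\omega^v(g) = \nu(g)^n |\nu(g)| \left(\frac{|\langle \omega, g r_0(i)\rangle|}{\langle \omega, g r_0(i)\rangle}\right)^v K_v(|\langle \omega, gr_0(i)\rangle|)\]
and compute each factor. The similitude identity $\langle mw_1,mw_2\rangle=\nu(m)\langle w_1,w_2\rangle$ rewrites as $\langle\omega, m x\rangle=\nu(m)\langle m^{-1}\omega,x\rangle$, and the hypothesis $\tilde m\omega=\omega$ says precisely that $m^{-1}\omega=\nu(m)^{-1}\omega$. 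Combining the two gives $\langle\omega,mg\,r_0(i)\rangle = \langle\omega,g\,r_0(i)\rangle$; so the phase factor and the Bessel factor are unchanged, and only the prefactor $\nu(\cdot)^n|\nu(\cdot)|$ picks up the scalar $\nu(m)^n|\nu(m)|$, uniformly in $v$.

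For the constant term the strategy is to start from
\[F_{00}(g) = \nu(g)^n|\nu(g)|\left(\Phi(g)\tfrac{x^{2n}}{(2n)!} + \beta \tfrac{x^ny^n}{n!n!} + \Phi'(g)\tfrac{y^{2n}}{(2n)!}\right)\]
and track the effect of $Z_H\simeq \GL_1$ on each summand. Since $E_{13}$ lies in the weight-$2$ piece of the five-grading of $\g(J)$ attached to the Heisenberg parabolic, one has $\nu(z)=z^2$ for $z\in Z_H$, so the similitude prefactor contributes $\nu(z)^n|\nu(z)| = z^{2n+2}$. The middle $\beta$-summand is of pure similitude type and thus has weight $z^{2n+2}$; I call this piece $f_1$. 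The $\Phi,\Phi'$ summands carry an additional central factor $z^{-n}$ coming from the central character of the weight-$n$ holomorphic modular form $\Phi$ on $H_J$ (a consequence of $H(g\cdot i)=j(g,i)^n\Phi(g)$, the fact that the center fixes the basepoint of $\mathcal H_J^{\pm}$, and the value of $j$ on the center); together they have weight $z^{2n+2-n}=z^{n+2}$, and I group them as $f_2$. Since the two eigencharacters of $Z_H$ are distinct, the decomposition $F_{00}=f_1+f_2$ is forced. The main nontrivial ingredient is identifying the central character of $\Phi$; the equivariance part itself is just the one-line symplectic-similitude computation above.
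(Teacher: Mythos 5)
Your proposal is correct and is essentially the paper's own argument: the corollary is stated there as a direct consequence of the complete Fourier expansion formula of \cite{pollackQDS}, which is exactly what you use, computing the equivariance of $\mathcal{W}_{2\pi\omega}$ from the symplectic-similitude identity and reading off the two $Z_H$-eigencharacters $z^{2n+2}$ (the $x^ny^n$ term) and $z^{n+2}$ (the $\Phi,\Phi'$ terms, using $\nu(z)=z^2$ and the central character $z^{-n}$ of a weight-$n$ form on $H_J$) from the constant-term formula. Both identifications are consistent with the paper's own Eisenstein series computations (the $i=0$ and $i=1$ constant-term contributions), so nothing further is needed.
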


\begin{remark} We have derived this corollary as a consequence of the complete formula given in \cite{pollackQDS}.  However, the corollary is useful by itself.  Can the corollary be proved directly, by a ``softer'' method?\end{remark}

The idea of the proof of Proposition \ref{prop:Eisivan} is to show that $E_i^v(g,s)$ is an absolutely convergent sum of terms, each of which vanishes at $s=n+1$ because it does not satisfy the required equivariance property enforced by Corollary \ref{cor:FwEquiv}.  We now make some preparatory remarks that we will use to carry out this strategy.

For $\omega \in W_J$, recall that we denote $\chi_{\omega}(n) = \psi(\langle \omega, n\rangle)$.  Suppose we have an absolutely convergent integral
\[f^{\omega}(g,s) = \int_{Y(\R)}{f(w^{-1} y g,s)\chi_\omega(y)^{-1}\,dy},\]
and $m \in H_J$ with $\widetilde{m} \omega = \omega$, and $m$ preserves $Y$.  Here $Y \subseteq N$.  Furthermore, assume $f(p g,s) = |\nu(p)|^{s}f(g,s)$ for $p \in P$ the Heisenberg parabolic, and that $w^{-1}mw \in P$.  Then we have

\[f^{\omega}(mg,s) = J(m,Y) |\nu(w^{-1} m w)|^{s} f^{\omega}(g,s)\]
where
\[J(m,Y) = \left|\frac{d(m y m^{-1})}{dy}\right|_{Y}\]
is the measure term that comes from a change of variable in the integral.  

For a rank one line $\ell \subseteq \g(J)$, and $g \in G_J$ stabilizing $\ell$, define $\mu_\ell(g) \in \GL_1$ to be the element with $g \ell = \mu_{\ell}(g) \ell$.  Then $\nu(w^{-1} m w) = \mu_{w E_{13}}(m)$.  Thus, with assumptions and notations as above, we have
\[f^{\omega}(m g,s) = J(m,Y) |\mu_{w E_{13}}(m)|^{s} f^{\omega}(g,s).\]

We will now analyze term $C(m,Y,s) :=J(m,Y)|\mu_{w E_{13}}(m)|^{s}$ for the integrals that appear in the previous subsection.  We first consider the constant term, i.e., $\omega = 0$.
\begin{enumerate}
\item The case $i=0$.  We have $f_0(zg,s) = |\nu(z)|^{s}f_0(g,s) = |z|^{2s} f_0(g,s)$.  At $s=n+1$, this character is $z^{2n+2}$.
\item The case $i=1$.  Fix $\ell \in e \otimes W_J$ rank one, and set $Y_1 = N_{\ell}^1\backslash N$. We have $J(z,Y_1) = |z|$.  Because $\gamma(\ell) E_{13} \in e \otimes W_J$, $\mu_{\gamma(\ell) E_{13}}(z) = z$.  Thus, $C(z,Y_1,s) = |z|^{s+1}$, which gives $z^{n+2}$ at $s=n+1$.
\item The case $i=2$.  Fix $\phi \in h(J)^0$ spanning a minimal line, and set $Y_2 = \ker(\phi)N_0\backslash N$.  Then $J(z,Y_2) = |z|^{4+dim(C)}$, where recall $J=H_3(C)$.  In this case, $\gamma(\phi) E_{13} \in \h(J)^0$, so $\mu_{w}(z) = 1$.  Thus $C(z,Y_2^0,s) = |z|^{4+\dim(C)}$, independent of $s$.
\item The case $i=3$.  Fix $\ell \subseteq f \otimes W_J$ a rank one line, and set $Y_3 = W(\ell)\backslash N$.  In this case, $J(z,Y_3) = |z|^{\dim(J)+3}$. (Note that $z$ scales the measure on $N_0$ by $|z|^2$.) Furthermore, $\gamma(\ell) E_{13} \in f \otimes W_J,$ and thus $\mu_{w E{13}}(z) = z^{-1}$.  Hence $C(z,Y_3,s) = |z|^{\dim(J)+3-s}$, which specializes to $|z|^{\dim(J)+2-n}$ at $s=n+1$.
\item The case $i=4$.  Set $Y_4 = N$.  Then $J(z,Y_4) = |z|^{\dim W_J + 2} = |z|^{2\dim(J)+4}$. We have $\mu_{w_4 E_{13}}(z) = z^{-2}$, thus $C(z,Y_4,s) = |z|^{2\dim(J)+4-2s}$, which at $s=n+1$ is $|z|^{2\dim(J)+2-2n}$.  
\end{enumerate}

We obtain the following lemma.  Recall our running assumption that $n > 0$ is even.
\begin{lemma}\label{lem:Econstz} Suppose that the meromorphic continuation of $E(g,s,n)$ to $s=n+1$ is regular and defines a modular form of weight $n$ at this point.  Suppose moreover that $n$ and $J$ are such that the numbers $2n+2,n+2,4+\dim(C), \dim(J) + 2 -n$, and $2\dim(J)+2-2n$ are pairwise distinct. If $G=G_2$ then exclude the term $4 +\dim(C)$. Then only $E_0(g,s)$ and $E_1(g,s)$ contribute to the constant term of $E(g,s,n)$ at $s=n+1$.\end{lemma}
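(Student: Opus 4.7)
The plan is to combine Corollary \ref{cor:FwEquiv} with the $Z_H$-equivariance computations in items (1)--(5) preceding the statement, exploiting linear independence of characters of $Z_H(\R)\simeq\GL_1(\R)$.

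First, I would decompose the full constant term of the Eisenstein series along $N$ as
\[
\int_{[N]} E(ug,s;n)\,du \;=\; \sum_{i=0}^{4} E_i^0(g,s;n),
\]
where each $E_i^0(g,s;n)$ is the absolutely convergent integral extracted from $E_i(g,s;n)$ in Lemma \ref{lem:EisSum} (specialized to the trivial character, i.e.\ the $v=0$ case of the preceding analysis). When $G=G_2$ the term $i=2$ drops out, which matches the exclusion clause in the hypothesis.

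Second, the change-of-variables identity $f^\omega(mg,s)=J(m,Y)|\mu_{wE_{13}}(m)|^{s} f^\omega(g,s)$ derived just above this lemma shows that $E_i^0$ is a $Z_H(\R)$-eigenvector whose eigen-character is $z\mapsto C(z,Y_i,s)$. At $s=n+1$, items (1)--(5) specialize these to $z\mapsto |z|^{c_i}$ with
\[
(c_0,c_1,c_2,c_3,c_4) \;=\; \bigl(2n+2,\; n+2,\; 4+\dim(C),\; \dim(J)+2-n,\; 2\dim(J)+2-2n\bigr).
\]
Since $n$ is even, each $c_i$ has the same parity, so $|z|^{c_i}$ coincides with $z\mapsto z^{c_i}$ on $Z_H(\R)$.

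Third, I would apply Corollary \ref{cor:FwEquiv}. By hypothesis $E(g,s=n+1;n)$ is a modular form of weight $n$, so the corollary forces its $N$-constant term to decompose as $f_1+f_2$ with $f_1(zg)=z^{2n+2}f_1(g)$ and $f_2(zg)=z^{n+2}f_2(g)$. In particular, as a function on $Z_H$, the full constant term is supported on the two characters $z^{2n+2}$ and $z^{n+2}$. By the pairwise distinctness hypothesis on the $c_i$, projecting $\sum_i E_i^0(g,s=n+1;n)$ onto each $Z_H$-isotypic component forces $E_i^0(g,s=n+1;n)=0$ for $i\in\{2,3,4\}$, which is the conclusion.

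The main (mild) technical point I expect to confront is whether each individual $E_i^0(g,s;n)$ is actually regular at $s=n+1$, so that the specialization makes literal sense: a putative pole in some $E_i^0$ could a priori cancel against a pole in another $E_j^0$ upon summation. However, this cannot happen here, because distinct $Z_H$-eigencharacters cannot cancel one another in a sum, so the pairwise distinctness hypothesis already forces each $E_i^0(g,s;n)$ to be regular at $s=n+1$, and the character-decomposition argument then goes through without further issue.
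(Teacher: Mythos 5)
Your proposal is correct and follows essentially the same route as the paper: decompose the constant term as $\sum_i E_i^0$, note that each piece is a $Z_H(\R)$-eigenvector with the eigencharacters listed in items (1)--(5), use distinctness of these characters to rule out cancellation of poles (hence each $E_i^0$ is individually regular at $s=n+1$), and then invoke Corollary \ref{cor:FwEquiv} to conclude that only the exponents $2n+2$ and $n+2$ can survive, killing the $i=2,3,4$ terms. Your explicit handling of the possible pole-cancellation issue is exactly the point the paper makes when it deduces individual regularity from the distinct characters.
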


Note that in Lemma \ref{lem:Econstz}, we do not need to assume that the Eisenstein series $E(g,s,n)$ converges absolutely at $s=n+1$.

\begin{proof}[Proof of Lemma \ref{lem:Econstz}] If $E(g,s,n)$ is regular at $s=n+1$, then so is its constant term.  Each $E_i(g,s)$ contributes to the constant term.  Now, under the assumptions of the lemma, the constant terms of the $E_i(g,s)$ transform by different characters under left-translation $g \mapsto zg$ for $z \in Z_H(\R)$.  It follows the constant terms of the $E_i(g,s)$ are each individually regular at $s=n+1$.  Because they are regular at $s=n+1$, the values of these constant terms at $s=n+1$ must transform by the characters $\{2n+2,n+2,4+\dim(C),\dim(J)+2-n,2\dim(J)+2-2n\}$.  However, the constant term of a modular form can only transform by $z^{2n+2}$ and $z^{n+2}$.  Again because these numbers are assumed all distinct, only $E_0^0(g,s)$ and $E_1^0(g,s)$ contribute; the other $E_i^0(g,s)$ for $i=2,3,4$ must vanish at $s=n+1$.\end{proof}

We consider next the nonconstant Fourier coefficients.  For the nonconstant terms, we will need to assume that the sum defining the Eisenstein series converges absolutely.  Let the $Y_i$ be as above.  

We will use the action of a certain four-dimensional torus $T \subseteq H_J$ when $J=H_3(C)$.  We parametrize the elements of $T$ as ordered four-tuples $t=(\lambda,t_1,t_2,t_3)$.  In these coordinates, set $\delta(t) :=t_1t_2t_3$.  Then $T$ acts on $W_J$ as
\[t (a,b,c,d) = (\lambda^{-1} \delta^{-1}(t) a, \delta^{-1}(t) (t \cdot b), \lambda \delta(t) (t^{-1} \cdot c), \lambda^2 \delta(t) d)\]
where $t \cdot X = \diag(t) X \diag(t)$ for $X \in H_3(C)$.  Moreover, one has $\nu(t) = \lambda$ and $Ad(t) n_L(X) = \lambda n_L( t \cdot X)$.  

We set $T^1 \subseteq T$ the subtorus consisting of elements with $\lambda =1$.  Additionally, define $T' \subseteq T$ the subtorus consisting of those elements where $t_1=t_2=t_3$.  Then when $G=G_2$, $T' \subseteq P \subseteq G_2$ and acts on $W_J$ by the same formula.  We are now ready to prove Proposition \ref{prop:Eisivan}.

\begin{proof}[Proof of Proposition \ref{prop:Eisivan}] Because of the assumption that the Eisenstein series converges absolutely, we may analyze the terms $E_i^{\omega}(g,s)$ separately for each $i$.  Assume $\omega \neq 0$.

{\bf The case $i=4$} Consider first the case $i=4$.  Suppose $m \in H_J$. Then $J(m,Y_4) = |\nu(m)|^{\dim J + 2}$, and $\mu_{w_4 E_{13}}(m) = \nu(m)^{-1}$.  Thus $C(m,Y,s) = |\nu(m)|^{\dim J + 2 - s}$, which specializes to $\dim(J) +1 -n$ at $s=n+1$.  Note that, if $\omega$ rank four and $\widetilde{m} \omega = \omega$, then necessarily $|\nu(m)| =1$.  By contrast, if $\omega$ is rank $1,2$ or $3$, there there exists $m \in M$ with $\widetilde{m} \omega = \omega$ but $|\nu(m)| \neq 1$.  Indeed, if $\omega$ is rank two or three, then $\omega$ has an $H_J(F)$-translate $\omega'$ of the form $(0,*,0,0)$, and if $\omega$ is rank one, then $\omega$ has a translate $\omega'$ of the form $(1,0,0,0)$.  In the first case, $\omega'$ is fixed by $t = (\lambda, (1,1,1))$, while in the second, $\omega'$ is fixed by $(\lambda^3,(\lambda^{-1},\lambda^{-1},\lambda^{-1}))$.  Thus, so long as $\dim J + 1 -n \neq n+1$, the term $E_4^\omega(g,s)$ will vanish for $\omega$ not rank $4$.  However, the condition $\dim(J) = 2n$ is never satisfied for an even $n$, as the allowable $\dim(J)$'s are $\{1,6,9,15,27\}$.

{\bf The case $i=3$} Now consider the case $i=3$.  Recall that we are analyzing
\[E_3^\omega(g,s) = \sum_{\ell: \omega \in W(\ell)}\int_{W(\ell)(\A)\backslash N(\A)}{\chi_\omega^{-1}(n)f(\gamma(\ell)^{-1}ng,s)\,dn}\]
where $\gamma(\ell) E_{13}$ spans $\ell \subseteq f \otimes W_J$.  We are interested in seeing if this vanishes at $s=n+1$ for $\omega$ of rank one or two.

Because of the absolute convergence, we may consider each term separately, and then without loss of generality we may assume that $\ell$ is spanned by $f \otimes (0,0,0,1) = E_{21}$, so that $W(\ell) = (0,0,*,*)$.  Recall that the condition on $\omega$ is $\omega \in W(\ell)$. If $G = G_2$, then necessarily $\omega$ is rank one and in $(0,0,0,*)$.  If $G$ is not $G_2$, then without loss of generality we may assume that $\omega = (0,0,\omega_2,0)$, with $\omega_2 \in e_{11} \times J \subseteq J^\vee$, i.e., $\omega_2$ has $(i,j)$ coefficient equal to $0$ if either $i$ or $j$ equals $1$.  We set $Y_3 = W(\ell)\backslash N$ as above.

Suppose first that $G=G_2$, and $t' = (\lambda,(t,t,t)) \in T'$.  Then with notation as above, $J(t',Y_2) = |t|^{-4}$.  Furthermore, one has $\mu_{E_{21}}(t') = \lambda t^3$.  Thus, $C(t',Y_3,s) = |\lambda t^3|^{s} |t|^{-4} = |\lambda t^3|^{n+1}|t|^{-4}$ at $s=n+1$.  Now, $\widetilde{t'} (0,0,0,1) = \lambda^{-1}t^{-3}(0,0,0,1)$, and thus the condition $\widetilde{t'} \omega = \omega$ is $\lambda t^3 = 1$.  Thus for such $t'$, $C(t',Y_3,s=n+1)=|\nu(t')|^{n+1}$ if and only if $|t|^{-4} = |\lambda|^{n+1} = |t|^{-3n-3}$.  But $4 \neq 3n+3$, and we can find $t'$ with $\widetilde{t'}\omega = \omega$ and $|t| \neq 1$.  Thus, in case $G=G_2$, this term vanishes.

Now suppose that $G \neq G_2$.  Consider the action of the subgroup of $T^1$ with $t_1 = 1$ and $t_2 = t_3$.  Then this group fixes $\omega$.  One computes that $J(t,Y_3) = |\delta|^{-1-\dim J/3}$.  The term $\mu_{\ell}(t) = \delta$, thus $C(t,Y_3,s) = |\delta|^{s-1-\dim J/3}$.  Because $\nu(t) = 1$ but the elements $(1,t,t) \in T$ have $\delta \neq 1$, this only transforms the right way at $s=n+1$ if $3n=\dim(J)$.  But $n > 2$ is even, so this cannot occur for $\dim(J) \in \{6,9,15,27\}$.  This completes the proof of the vanishing for $i=3$.

{\bf The case $i=2$}  We are now left with the case of  $i=2$.  This case cannot occur when $G=G_2$, so we assume $J = H_3(C)$ so that $G\neq G_2$.  Recall that we are interested in
\[E_2^{\omega}(g,s) = \sum_{\phi: \omega \in I(\phi)}\int_{(\ker(\phi)N_0)(\A)\backslash N(\A)}{\chi_\omega^{-1}(n)f(\gamma(\phi)^{-1}ng,s)\,dn}.\]
We'd like to see that this vanishes at $s=n+1$ if $\omega$ is rank one.  If $\phi$ is fixed, set $Y_2 = \ker(\phi)N_0\backslash N$.

By absolute convergence, we may consider the individual terms separately.  Thus we may assume that $\phi = n_{L}(e_{11})$.  The condition on $\omega$ is that $\omega \in I(\phi) = (0,Fe_{11}, e_{11} \times J, F)$.  Without loss of generality, we may furthermore assume that $\omega = (0,0,0,1)$.  Note that the subgroup of $T^1$ with $\delta = t_1 t_2 t_3 = 1$ fixes $\omega$.

Now, for $t \in T^1$, we have $\mu_{F \phi}(t) = t_1^2$.  Moreover, one computes $J(t,Y_2) = |t_1|^{-(\dim(C) + 4)}$.  Thus, $C(t,Y_2,s) = |t_1|^{2s-\dim(C)-4}$, which is $|t_1|^{2n-\dim(C)-2}$ at $s=n+1$.  However, $\nu(t) = 1$, so if $|t_1| \neq 1$, this transforms the correct way only if $2n=\dim(C)+2$.  But then $n < \dim(J)$, so this cannot occur.  This completes the argument in the case $i=2$.
\end{proof}

Summarizing, we have proved the following result.
\begin{theorem}\label{thm:FEconvEis} Suppose $n > \dim(J)+1$ is even, so that $E(g,s;n)$ converges absolutely, and defines a modular form at $s=n+1$.  Then, the Fourier expansion of $E(g,s;n)$ at $s=n+1$ is given as follows:
\begin{enumerate}
\item If $\omega$ is rank one, then 
\[E^{\omega}(g,s=n+1;n) = \int_{((F\omega)^\perp N_0)(\A)\backslash N(\A)}{\chi_{\omega}^{-1}(u)f(\gamma(\omega)^{-1}ug,s=n+1)\,du}\]
where $\gamma(\omega)E_{13} = e\otimes \omega$.
\item If $\omega$ is rank two (which cannot occur in case $G=G_2$), then
\[E^{\omega}(g,s=n+1;n) = \int_{(\ker(\Phi_{\omega,\omega}) N_0)(\A)\backslash N(\A)}{\chi_{\omega}^{-1}(u)f(\gamma(\Phi_{\omega,\omega})^{-1}ug,s=n+1)\,du}\]
where $\gamma(\Phi_{\omega,\omega})E_{13} = \Phi_{\omega,\omega}.$
\item If $\omega$ is rank three, then
\[E^{\omega}(g,s=n+1;n) = \int_{(W(\omega^\flat)(\A)\backslash N(\A)}{\chi_{\omega}^{-1}(u)f(\gamma(\omega^\flat)^{-1}ug,s=n+1)\,du}\]
where $\gamma(\omega^\flat)E_{13} = f \otimes \omega^\flat.$
\item If $\omega$ is rank four, then
\[E^{\omega}(g,s=n+1;n) = \int_{N(\A)}{\chi_\omega^{-1}(u)f(w_4^{-1}ug,s=n+1)\,d}u.\]
\end{enumerate}
Finally, the constant term of $E(g,s=n+1;n)$ is $f_0(g,s=n+1) + E_1^0(g,s=n+1;n)$, where
\[E_1^0(g,s=n+1;n) = \sum_{\ell \subseteq W_J^{rk 1}}{\int_{(\ell)^\perp N_0(\A)\backslash N(\A)}{f(\gamma(\ell)^{-1}ug,s=n+1)\,du}}\]
and $\gamma(\ell)E_{13}$ spans $e \otimes \ell$.\end{theorem}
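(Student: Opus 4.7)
The plan is to obtain Theorem \ref{thm:FEconvEis} by assembling results already established in this section, namely the abstract Fourier expansion, the Eulerian formula for $E_{\mathrm{rank}(\omega)}^\omega$, Proposition \ref{prop:Eisivan}, and Lemma \ref{lem:Econstz}. The hypothesis $n > \dim(J)+1$ guarantees absolute convergence at $s=n+1$, so each individual $E_i^\omega(g,s=n+1;n)$ is well defined and may be treated in isolation.

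First I would dispatch the nonconstant Fourier coefficients. Fix a nonzero $\omega \in W_J$ of rank $r$. By Lemma \ref{lem:rkiv}, $E_i^\omega \equiv 0$ whenever $r > i$, so $E^\omega(g,s;n) = \sum_{i=r}^{4} E_i^\omega(g,s;n)$. Proposition \ref{prop:Eisivan} kills every term with $i > r$ at $s=n+1$, leaving $E^\omega(g,s=n+1;n) = E_r^\omega(g,s=n+1;n)$. The explicit single-integral expression for this surviving term is precisely the Euler product recorded in the final lemma of the abstract Fourier expansion subsection, which treats the four cases $v$ of rank one, two, three, and four in turn; reading off the relevant formula gives parts (1)--(4) of Theorem \ref{thm:FEconvEis} directly.

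For the constant term I would apply Lemma \ref{lem:Econstz}. Its hypothesis demands that the five exponents
\[
2n+2,\quad n+2,\quad 4+\dim(C),\quad \dim(J)+2-n,\quad 2\dim(J)+2-2n
\]
(or the four of them obtained by deleting $4+\dim(C)$ when $G=G_2$) be pairwise distinct. Using the relation $\dim(J) = 3\dim(C)+3$ for $J = H_3(C)$, together with the hypothesis $n > \dim(J)+1$ and the evenness of $n$ (which upgrades this to $n \geq \dim(J)+2$), a short check establishes the strict chain
\[
2\dim(J)+2-2n \;<\; \dim(J)+2-n \;\leq\; 0 \;<\; 4+\dim(C) \;<\; n+2 \;<\; 2n+2,
\]
so the hypothesis is satisfied. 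Lemma \ref{lem:Econstz} then ensures that only $E_0^0 = f_0$ and $E_1^0$ contribute to the constant term at $s=n+1$, yielding the constant-term formula stated in the theorem.

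The only step requiring genuine care is the short arithmetic verification of the strict chain above, which handles each of the possible $\dim(C) \in \{0,1,2,4,8\}$ uniformly; beyond that, the proof is a direct invocation of Proposition \ref{prop:Eisivan} and Lemma \ref{lem:Econstz}, so I anticipate no substantive obstacle.
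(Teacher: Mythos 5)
Your proposal is correct and follows essentially the same route as the paper: the theorem is stated there as a summary of the preceding results, obtained exactly by combining Lemma \ref{lem:rkiv}, the Eulerian formula for $E_{\mathrm{rank}(\omega)}^{\omega}$, Proposition \ref{prop:Eisivan}, and Lemma \ref{lem:Econstz} (whose distinctness hypothesis you verify explicitly, a detail the paper leaves implicit). The only cosmetic slip is invoking $\dim(J)=3\dim(C)+3$ "uniformly" including the $G_2$ case, where $J=F$ has $\dim J=1$; since you already exclude the $4+\dim(C)$ exponent there, the chain of inequalities still goes through unchanged.
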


\subsection{Computation of constant term} We now compute the constant terms more explicitly. 

\subsubsection{The $i=0$-term} The first thing that we do is compute the simplest term, $f_0(g,s)$.
\begin{lemma} Suppose $g \in P_{Heis}(\A)$.  Then
\[f_0(g,s) = |\nu(g)|^{s}\zeta(s) \frac{(-1)^{n/2}}{2^n} \Gamma_\R(s+n) x^n y^n,\]
which at $s = n+1$  becomes
\[f_0(g,s=n+1) = |\nu(g)|^{n+1} \zeta(n+1) \frac{(-1)^{n/2}}{2^n} \pi^{-n} (1/2)_n x^n y^n.\]
\end{lemma}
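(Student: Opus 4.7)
The computation splits into local pieces after pulling out $|\nu(g)|^{s}$ by $P$-equivariance. Since $P_{Heis}$ stabilizes the line $F E_{13}$ with character $\nu$, for $g\in P_{Heis}(\A)$ one has $g^{-1}E_{13}=\nu(g)^{-1}E_{13}$; substituting into the defining integral for $f_0(g,s)$ and changing variables $t\mapsto t\nu(g)$ on $\GL_{1}(\A)$ yields $f_{0}(g,s)=|\nu(g)|^{s}\,f_{0}(1,s)$. So the task reduces to evaluating the single adelic integral $\int_{\GL_{1}(\A)}|t|^{s}\Phi(tE_{13})\,dt$, which factors along $\Phi=\Phi_{f}\otimes\Phi_{\infty,n}$ into a product of local integrals.

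At each finite place $p$, taking the standard choice of $\Phi_{p}$ (so that $\theta_{Gan}$ and its relatives are spherical at every finite place), the local factor is the Tate integral on the line $\Q_{p}E_{13}$, which evaluates to $(1-p^{-s})^{-1}$. Multiplying over all finite primes produces $\zeta(s)$.

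At the archimedean place, the linearity of the projection $pr$ and the quadraticity of $\|\cdot\|^{2}=B_{\g}(\cdot,-\Theta(\cdot))$ give
\[
\Phi_{\infty,n}(tE_{13})=t^{n}\,pr(E_{13})^{n}\,e^{-\pi t^{2}\|E_{13}\|^{2}}.
\]
Since $n$ is even, $t^{n}=|t|^{n}$, and the remaining integral is a standard Gaussian computation that evaluates to $\|E_{13}\|^{-(s+n)}\Gamma_{\R}(s+n)$. What is left is a Lie-algebra calculation: using the explicit description of the Cartan involution $\Theta$ and the form $B_{\g}$ from \cite[section 4]{pollackLL}, one verifies $\|E_{13}\|^{2}=1$ and, under the identification of $\k_{2}$ with $Sym^{2}(V_{2})$ coming from the $\mathfrak{su}_{2}$-factor of $K$, one computes $pr(E_{13})^{n}=\tfrac{(-1)^{n/2}}{2^{n}}\,x^{n}y^{n}$ in $\Vm_{n}$. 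Assembling the local factors then gives the formula for general $s$, and the specialization at $s=n+1$ follows from $\Gamma_{\R}(2n+1)=\pi^{-n-1/2}\Gamma(n+\tfrac{1}{2})=\pi^{-n}(1/2)_{n}$.

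The main obstacle is the archimedean Lie-algebra step: pinning down the constant $(-1)^{n/2}/2^{n}$ and the norm $\|E_{13}\|^{2}=1$ requires tracking the identifications between $\k_{2}$, $\mathfrak{su}_{2}$, and $Sym^{2}(V_{2})$, together with the basis $x,y$ of $V_{2}$ fixed in \cite{pollackQDS}, with the correct sign and scale. Everything else is either an elementary change of variables or the standard Tate/Gaussian calculation.
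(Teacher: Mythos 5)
Your proposal is correct and follows essentially the same route as the paper: pull out $|\nu(g)|^{s}$ by the $P$-equivariance of $E_{13}$ and a change of variables, obtain $\zeta(s)$ from the finite-place Tate integrals, and reduce the archimedean factor to a Gaussian integral times $pr(E_{13})^{n}$. The only difference is that you assert the values $\|E_{13}\|^{2}=1$ and $pr(E_{13})^{n}=\tfrac{(-1)^{n/2}}{2^{n}}x^{n}y^{n}$ while deferring their verification, whereas the paper carries out this step explicitly using the basis $e_{\ell}, f_{\ell}, h_{\ell}$ of $\k_{2}$ and the identifications $e_{\ell}\mapsto x^{2}$, $f_{\ell}\mapsto -y^{2}$, $h_{\ell}\mapsto -2xy$, which give $pr(E_{13})=-\tfrac{i}{4}h_{\ell}\mapsto \tfrac{i}{2}xy$.
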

\begin{proof}  Assume $g \in P \subseteq G_J$ the Heisenberg parabolic.  Then we have
\begin{align*} f_0(g,s) &= \int_{\GL_1(\A)}{|t|^{s} \Phi(t g^{-1} E_{13})\,dt} \\ &= \int_{\GL_1(\A)}{|t|^{s} \Phi(t \nu(g)^{-1} E_{13})\,dt} \\ &= |\nu(g)|^{s} \int_{\GL_1(\A)}{|t|^{s}\Phi(t E_{13})\,dt}.\end{align*}
Thus, the contribution to $f_0(g,s)$ from the finite places is $|\nu(g)|^{s} \zeta(s)$.  

Let us now analyze the archimedean section $f(g,\Phi_n,s)$.  Observe that for $v \in \g(J)$,
\[pr(v) = B(v,f_{\ell}) e_{\ell} + \frac{1}{2} B(v,h_{\ell}) h_{\ell} + B(v,f_{\ell}) e_{\ell}.\]
Also, recall \cite[section 5.1]{pollackQDS}
\begin{itemize}
\item $e_{\ell} = \frac{1}{4}(ie+f) \otimes r_0(i)$,
\item $f_{\ell} = \frac{1}{4}(ie-f) \otimes r_0(-i)$, and 
\item $h_{\ell} = \frac{i}{2}\left(\mm{}{1}{-1}{} + n_{L}(-1) + n_{L}^\vee(1)\right)$.
\end{itemize}
and under the map $pr: \k \rightarrow Sym^2(V_2)$, $e_{\ell} \mapsto x^2$, $f_{\ell} \mapsto -y^2$, and $h_{\ell} \mapsto -2xy$ \cite[section 9]{pollackQDS}.  Consequently,
\[ pr(E_{13}) = -\frac{i}{4} h_{\ell} \mapsto \frac{i}{2} xy.\]
Thus if $n$ is even, 
\begin{equation}\label{eqn:fn1}f_{\infty}(1,\Phi_{\infty,n},s) = pr(E_{13})^{n} \int_{\GL_1(\R)}{|t|^{s+n} e^{-\pi t^2}\,dt} = \frac{(-1)^{n/2}}{2^n} \Gamma_{\R}(s+n) x^n y^n.\end{equation}

Combining the finite places and the archimedean place, we obtain the lemma.\end{proof}

\subsubsection{The $i=1$-term} Denote by $\ell_0$ the line spanned by $E_{23} = e \otimes (0,0,0,1)$, and suppose $\gamma_0 E_{13} = E_{23}.$  Define
\[f_1^0(g,s) = \int_{((\ell_0)^\perp N_0)(\A)\backslash N(\A)}{f(\gamma_0^{-1} n g,s)\,dn}.\]
Then 
\[E_1^0(g,s) = \sum_{\gamma \in P_{Sieg}(\Q)\backslash H_J(\Q)}{f_1^0(\gamma g,s)}\]
where $P_{Sieg}$ denotes the parabolic subgroup of $M$ that stabilizes the line $\Q(0,0,0,1)$. 
\begin{proposition} Suppose that $\Phi_v$ restricted to $F E_{13}\oplus FE_{23}$ is the characteristic function of $\Z_v E_{13} \oplus \Z_v E_{23}$ for every $v < \infty$.  Then for $p \in P_{Sieg}(\A)$,
\[ f_1^0(p,s=n+1) = \frac{\zeta(n)\Gamma(n)}{(4\pi)^n}|\nu(p)||\lambda(p)|^{n}(x^{2n}+y^{2n})\]
with $\lambda(p)$ defined by $p (0,0,0,1) = \lambda(p)(0,0,0,1)$.\end{proposition}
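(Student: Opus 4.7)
The plan is to unfold the adelic integral defining $f_1^0$, put it into a product form via a change of variables, factor it over local places, and compute each factor. First I would parametrize the one-dimensional quotient $((\ell_0)^\perp N_0)(\A)\backslash N(\A) \cong \A$ by $a \mapsto \exp(av^*)$ for a chosen $v^* \in e\otimes W_J$ with $\langle v^*, w_0\rangle = 1$. Since $E_{13}$ is central in $\mathfrak{n}$ and $[v^*, v_0] = -E_{13}$ (from $v_0 = e\otimes w_0$ and the bracket on $e\otimes W_J$), the exponential series terminates and gives $\exp(-av^*)\gamma_0 E_{13} = v_0 + aE_{13}$. Substituting this into the definition of $f$, interchanging the order of integration, and changing variables $b = ta$ yields
\[
f_1^0(g,s) = \int_\A \int_{\A^\times} |t|^{s-1} \Phi\bigl(tg^{-1}v_0 + bg^{-1}E_{13}\bigr)\,d^\times t\,db,
\]
which factors over places. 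The $P_{Sieg}(\A)$-equivariance is then immediate: $p\cdot v_0 = \lambda(p)v_0$ and $p\cdot E_{13} = \nu(p)E_{13}$ for $p\in P_{Sieg}$, and the substitution $t\mapsto\lambda(p)^{-1}t$, $b\mapsto\nu(p)^{-1}b$ gives $f_1^0(p,s) = |\lambda(p)|^{s-1}|\nu(p)|\,f_1^0(1,s)$, which at $s=n+1$ produces the claimed equivariance factor $|\nu(p)||\lambda(p)|^n$.

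At each finite place $v$, the hypothesis makes $\Phi_v(tv_0 + bE_{13}) = \mathbf{1}_{\Z_v}(t)\mathbf{1}_{\Z_v}(b)$, so the local integral separates as a $b$-integral over $\Z_v$ (a constant under the fixed Haar normalization) and $\int_{\Z_v\cap F_v^\times}|t|^{s-1}\,d^\times t = (1-q_v^{-1})\zeta_v(s-1)$. The product over finite $v$ yields $\zeta(s-1)$, equal to $\zeta(n)$ at $s=n+1$.

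The archimedean calculation is the heart of the proof. Using the explicit formulas for $e_\ell, f_\ell, h_\ell$, one computes the projections $pr(v_0) = \tfrac{1}{4}(x^2+y^2) =: A$ and $pr(E_{13}) = \tfrac{i}{2}xy =: B$ in $Sym^2(V_2)$, and checks $\|tv_0 + bE_{13}\|^2 = t^2 + b^2$, so the archimedean integral is
\[
I_\infty(s) = \int_\R\int_\R |t|^{s-2}(tA + bB)^n e^{-\pi(t^2+b^2)}\,dt\,db.
\]
Passing to polar coordinates $t = r\cos\theta$, $b = r\sin\theta$ factors off a radial integral contributing $\tfrac{1}{2}\Gamma_\R(s+n)$, leaving an angular integral. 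The key identity
\[
tA + bB = \tfrac{t+ib}{8}(x+y)^2 + \tfrac{t-ib}{8}(x-y)^2,
\]
combined with the binomial theorem, reduces the angular integral to sums of standard evaluations of $\int_0^{2\pi}|\cos\theta|^{s-2}e^{i(2k-n)\theta}\,d\theta$ via a Beta integral. The main obstacle is the final arithmetic at $s=n+1$: after clearing half-integer Gammas with $\Gamma(k+\tfrac{1}{2}) = (2k)!\sqrt{\pi}/(4^k k!)$, the proof reduces to the combinatorial identity
\[
\sum_{k=0}^n \frac{(x+y)^{2k}(x-y)^{2(n-k)}}{(2k)!\,(2(n-k))!} = \frac{2^{2n-1}}{(2n)!}(x^{2n}+y^{2n}),
\]
which follows by extracting the degree-$2n$ piece of $\cosh(x+y)\cosh(x-y) = \tfrac{1}{2}[\cosh(2x)+\cosh(2y)]$. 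Combining the finite, archimedean, and equivariance contributions yields the formula.
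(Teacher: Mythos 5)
Your proposal is correct, and the overall skeleton (unfold the unipotent integral, change variables to factor over places, extract the $P_{Sieg}$-equivariance, then do one local computation at each place) is the same as the paper's; the genuine difference is in how the archimedean integral is evaluated. The paper keeps the variable $\beta$ with $x=t\beta$ unscaled, expands $pr(E_{23}-\beta E_{13})^n=4^{-n}(x^2-2i\beta xy+y^2)^n$ in the basis $(x^2+y^2)^{n-k}(xy)^k$, computes the Gaussian moments in $\beta$ and then the $t$-integral, and is left with the identity $\sum_{k \text{ even}}\binom{n}{k}2^k(-1)^{k/2}(x^2+y^2)^{n-k}(xy)^k\left(\tfrac12\right)_{k/2}\Gamma(n-k/2)=\Gamma(n)(x^{2n}+y^{2n})$, which it proves by a generating-series trick using $-\log(1-u-v+uv)=-\log(1-u)-\log(1-v)$. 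You instead diagonalize via the basis $(x\pm y)^2$ (the same $f_1,f_2$ basis the paper later uses for the archimedean intertwiner), pass to polar coordinates so that the radial integral gives $\tfrac12\Gamma_\R(s+n)$ and the angular integral is a standard Beta-type evaluation, and close with the degree-$2n$ piece of $\cosh(x+y)\cosh(x-y)=\tfrac12[\cosh(2x)+\cosh(2y)]$; I checked that your route does reproduce the correct archimedean value $\Gamma(n)(4\pi)^{-n}(x^{2n}+y^{2n})$, so the two combinatorial identities are equivalent bookkeepings of the same computation, with yours arguably cleaner at the final step and the paper's avoiding the angular-integral formula. One small inconsistency to fix: at the finite places you write the local factor as $(1-q_v^{-1})\zeta_v(s-1)$ and then assert the product is $\zeta(s-1)$; these are incompatible normalizations of $d^\times t$, and to get the constant in the proposition (no stray Euler factors) you should normalize the multiplicative measure so that $\Z_v^\times$ has volume $1$, in which case the local integral is exactly $\zeta_v(s-1)$, as in the paper.
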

See also \cite[section 13]{ganSW} and especially \cite[Lemma 13.14]{ganSW} where the case of $G_2$ is discussed.
\begin{proof} We have
\begin{align*} f_1^0(g,s) &= \int_{((\ell_0)^\perp N_0)(\A)\backslash N(\A)}{f(\gamma_0^{-1} n g,s)\,dn}\\ &= \int_{\GL_1}\int_{F}{|t|^{s}\Phi(t g^{-1} n^{-1}E_{23})\,dt\,dn} \\ &= \int_{\GL_1}\int_{F}{|t|^{s}\Phi(t g^{-1} (E_{23}-x E_{13}))\,dt\,dx}.\end{align*}
Here we have used that if $n=e \otimes (x,0,0,0)$, then $n^{-1} E_{23} = E_{23} - x E_{13}$.  

Now suppose $g = p \in P_{Sieg} \subseteq H_J$, the Siegel parabolic subgroup of the Levi $H_J$.  Denote by $\lambda$ the character of $P_{Sieg}$ that defines its Siegel Eisenstein series, i.e. $p E_{23} = \lambda(p) E_{23}$.  Then $p^{-1}(E_{23}-x E_{13}) = \lambda(p)^{-1} E_{23} - x \nu(p)^{-1} E_{13}$.  So, we must evaluate
\begin{align*} f_1^0(g,s) &= \int_{\GL_1}\int_{F}{|t|^{s}\Phi(t \lambda(p)^{-1} E_{23} - x t\nu(p)^{-1} E_{13})\,dx\,dt} \\ &= |\nu(p)||\lambda(p)|^{s-1}\int_{\GL_1}\int_{F}{|t|^{s}\Phi(t E_{23} - xt E_{13})\,dx\,dt}\end{align*}
where we have changed variables $x \mapsto \nu(p)\lambda(p)^{-1} x$ and $t \mapsto \lambda(p) t$.

Now, assuming that $\Phi_v$ is as in the statement of the proposition, the integral over $t$ and $x$ gives $\zeta_v(s-1)$.  Thus, at the finite place $v$ with $\Phi_v$ as above and $p \in P_{Sieg} \subseteq H_J$, we get 
\[f_1^0(p,s) = \zeta_v(s-1) |\nu(p)| |\lambda(p)|^{s-1}.\]

We now must calculate what happens at the archimedean place for $f_1^0(g,s)$.  We require an explicit expression for $pr(u)$ for $u = e \otimes v + \mu E_{13}$.  We have $B(u,e_{\ell}) = \frac{1}{4} \langle v, r_0(i)\rangle$, $B(u,f_{\ell}) = -\frac{1}{4} \langle v, r_0(-i)\rangle$, and $B(u,h_{\ell}) = -\frac{i}{2}\mu$.  Thus
\begin{align*} -4 pr(e \otimes v + \mu E_{13}) &= \langle v, r_0(-i)\rangle e_{\ell} + i \mu h_{\ell} - \langle v, r_0(i) \rangle f_{\ell} \\ &\mapsto \langle v, r_0(-i) \rangle x^2 -2i \mu x y + \langle v, r_0(i)\rangle y^2.\end{align*}
Therefore
\[pr( E_{23} - \beta E_{13}) = -\frac{1}{4}(-x^2+2 i \beta xy - y^2) = \frac{1}{4}(x^2-2i\beta xy + y^2).\]

Thus
\begin{align*} f_1^0(g,s) &= \int_{\GL_1}\int_{F}{|t|^{s} \Phi_{\infty,n}(t(E_{23}-\beta E_{13}))\,dx\,dt} = \int_{\GL_1}\int_{F}{|t|^{s+n} pr( E_{23}-\beta E_{13})^n e^{-\pi t^2(1 + \beta^2)}\,d\beta\,dt} \\ &=\frac{1}{4^n} \int_{\GL_1}\int_{F}{|t|^{s+n} (x^2 - 2i\beta xy + y^2)^n e^{-\pi t^2(1 + \beta^2)}\,d\beta\,dt}.\end{align*}
The final integral is
\[\frac{1}{4^n} \sum_{0 \leq k \leq n, \text{ even}} \binom{n}{k} 2^k (-1)^{k/2}(x^2+y^2)^{n-k}(xy)^k \int_{\GL_1}{|t|^{s+n} e^{-\pi t^2} \left(\int_{\R}{\beta^k e^{-\pi t^2 \beta^2}\,d\beta}\right)\,dt}.\]
The inner $\beta$-integral above gives
\begin{align*} \int_{\R}{\beta^k e^{-\pi t^2 \beta^2}\,d\beta} &= |t|^{-k-1} \int_{\R}{\beta^k e^{-\pi \beta^2}\,d\beta} \\ &= \pi^{-k/2} |t|^{-k-1} \left(\frac{1}{2}\right)_{k/2}\end{align*}

Therefore, we obtain
\begin{align*} |\nu(p)|^{-1}|\lambda(p)|^{1-s} f_1^0(p,s) &= \frac{1}{4^n} \sum_{0 \leq k \leq n, \text{ even}} \binom{n}{k} 2^k (-1)^{k/2}(x^2+y^2)^{n-k}(xy)^k \pi^{-k/2} \left(\frac{1}{2}\right)_{k/2} \\ &\;\;\; \times \Gamma_{\R}(s+n-k-1) \\ &\stackrel{s=n+1}{=} \frac{\pi^{-n}}{4^n} \sum_{0 \leq k \leq n, \text{ even}} \binom{n}{k} 2^k (-1)^{k/2}(x^2+y^2)^{n-k}(xy)^k \left(\frac{1}{2}\right)_{k/2} \Gamma(n-k/2)\end{align*}

\begin{lemma} One has
\[\sum_{0 \leq k \leq n, \text{ even}} \binom{n}{k} 2^k (-1)^{k/2}(x^2+y^2)^{n-k}(xy)^k \left(\frac{1}{2}\right)_{k/2} \Gamma(n-k/2) = \Gamma(n)(x^{2n}+y^{2n}).\]
\end{lemma}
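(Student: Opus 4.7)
The strategy is to rewrite the sum in a form where it is visibly a classical expansion for $x^{2n}+y^{2n}$ as a polynomial in the elementary symmetric functions of $x^2$ and $y^2$.

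First I would substitute $k=2j$ and use the standard identity $\left(\tfrac12\right)_j = \frac{(2j)!}{4^j\,j!}$ together with $\Gamma(n-j) = (n-j-1)!$ to collapse the prefactor:
\[
\binom{n}{2j}\,2^{2j}(-1)^{j}\left(\tfrac12\right)_{j}\Gamma(n-j) \;=\; \frac{n!}{(2j)!(n-2j)!}\cdot\frac{(2j)!}{j!}\cdot(-1)^{j}(n-j-1)! \;=\; (-1)^{j}\,\frac{n!\,(n-j-1)!}{j!\,(n-2j)!}.
\]
Thus the left-hand side of the lemma equals
\[
n!\sum_{j=0}^{\lfloor n/2\rfloor}\frac{(-1)^{j}(n-j-1)!}{j!\,(n-2j)!}\,(x^2+y^2)^{n-2j}(xy)^{2j}.
\]

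Next I would recognize the classical Lucas/Waring identity: for any commuting $a,b$,
\[
a^{n}+b^{n} \;=\; \sum_{j=0}^{\lfloor n/2\rfloor}\frac{n}{n-j}\binom{n-j}{j}(-1)^{j}(ab)^{j}(a+b)^{n-2j}.
\]
This is standard; one proof is by induction using the recursion $a^{n}+b^{n}=(a+b)(a^{n-1}+b^{n-1})-(ab)(a^{n-2}+b^{n-2})$ and verifying the coefficient of $(ab)^{j}(a+b)^{n-2j}$ satisfies the corresponding Pascal-type recurrence (equivalently, these are the Dickson/Chebyshev polynomials). A small algebra check shows
\[
\frac{n}{n-j}\binom{n-j}{j} \;=\; \frac{n\,(n-j-1)!}{j!\,(n-2j)!},
\]
so applying the Lucas identity with $a=x^2$, $b=y^2$ gives
\[
x^{2n}+y^{2n} \;=\; \sum_{j=0}^{\lfloor n/2\rfloor}\frac{n\,(n-j-1)!}{j!\,(n-2j)!}(-1)^{j}(x^2+y^2)^{n-2j}(xy)^{2j}.
\]
Multiplying both sides by $(n-1)!=\Gamma(n)$ and dividing by the factor $n$ on the right matches the expression obtained in the first step, completing the proof.

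The only nontrivial input is the Lucas identity itself, which I would either cite or prove in half a line by induction on $n$; after that everything is a bookkeeping check on Pochhammer symbols.
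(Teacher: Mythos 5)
Your proof is correct, and it begins exactly as the paper does: substitute $k=2j$, use $\left(\tfrac12\right)_j=\frac{(2j)!}{4^j\,j!}$ and $\Gamma(n-j)=(n-j-1)!$ to reduce the left-hand side to $n!\sum_j \frac{(-1)^j(n-j-1)!}{j!\,(n-2j)!}(x^2+y^2)^{n-2j}(xy)^{2j}$. The only divergence is in how the remaining binomial identity is handled. You invoke the classical Waring--Lucas identity $a^n+b^n=\sum_j \frac{n}{n-j}\binom{n-j}{j}(-1)^j(ab)^j(a+b)^{n-2j}$ (the Dickson/Chebyshev expansion), with a one-line induction via $a^n+b^n=(a+b)(a^{n-1}+b^{n-1})-ab(a^{n-2}+b^{n-2})$, whereas the paper proves the same identity on the spot by a generating-function computation: dropping the factor $n!$, summing over all $n$, and recognizing $\sum_{p\geq 1}\frac{((u+v)-uv)^p}{p}=-\log(1-u)-\log(1-v)=\sum_{n\geq 1}\frac{u^n+v^n}{n}$, then extracting coefficients. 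Your route is shorter if one accepts the classical identity as known (or proves it by the indicated induction, which does check out: the coefficients $\frac{n}{n-j}\binom{n-j}{j}$ satisfy the required Pascal-type recurrence); the paper's route is self-contained and avoids any citation. Both are complete, and your bookkeeping — in particular the step matching $n!\cdot\frac{1}{n}=(n-1)!=\Gamma(n)$ — is accurate.
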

\begin{proof} The proof is by generating series.  First, write $u = x^2$, $v = y^2$, $n = 2m$ and $k = 2j$.  Then we must evaluate
\[\sum_{0 \leq j \leq m}{\binom{2m}{2j}2^{2j}(-1)^j (u+v)^{2m-2j} (uv)^j \left(\frac{1}{2}\right)_j \Gamma(2m-j)}.\]
Because $\left(\frac{1}{2}\right)_j = \frac{(2j)!}{2^{2j} (j!)}$, this is
\[n! \sum_{0 \leq j \leq m}{(-1)^j \binom{2m-j}{j} \frac{1}{2m-j} (u+v)^{2m-2j}(uv)^{j}}.\]
To evaluate this, ignore the $n!$ and sum over all $n$ (not just even $n$) to obtain
\begin{align*} \sum_{n \geq 1}\sum_{0 \leq j \leq n-j}{ (-1)^j \binom{n-j}{j} \frac{1}{n-j} (u+v)^{(n-j)-j}(uv)^{j}} &= \sum_{p \geq 1}\sum_{0 \leq j \leq p}{(-1)^j \binom{p}{j} \frac{1}{p}(u+v)^{p-j}(uv)^j} \\ &= \sum_{p \geq 1}{\frac{((u+v)-uv)^{p}}{p}}.\end{align*}
But this last sum is
\[-\log(1-u-v+uv) =-\log(1-u) -\log(1-v) = \sum_{n \geq 1}{\frac{u^n + v^n}{n}}.\]
This proves the lemma. \end{proof}

We have thus proved at $s=n+1$ the archimedean section gives
\[f_1^0(p,s=n+1) = |\nu(p)||\lambda(p)|^{n} (4\pi)^{-n} \Gamma(n)(x^{2n} + y^{2n}).\]
This completes the proof of the proposition.\end{proof}

\subsection{The rank one Fourier coefficients} While the rank two, three and four coefficients of $E(g,\Phi,s=n+1)$ appear to be somewhat difficult to evaluate, the Fourier coefficients $E_1^\omega(g,s=n+1)$ for $\omega$ rank one can be computed directly.  In this subsection, we make this computation.

For $\omega \in W_J$ rank one, define
\[f_1^\omega(g,s) = \int_{(F\omega)^\perp N_0\backslash N}{\chi_\omega^{-1}(n)f(\gamma(\omega)^{-1}ng,s)\,dn}\]
where $\gamma(\omega) E_{13} = e \otimes \omega$.  We will assume that $\omega = a e \otimes (0,0,0,1) = a e\otimes \omega_0$ with $a \in F$.  So, $\omega_0 = (0,0,0,1)$ in this subsection.

\subsubsection{Evaluation at spherical finite places} First we evaluate $f_1^\omega(g,s)$ at spherical finite places. The Fourier coefficient will vanish unless $a \in \mathcal{O}$, so we can assume $a \in \mathcal{O}  = \Z_p$.  We get, as above,
\begin{align*} f^\omega_1(g,s) &= \int_{(\ell)^\perp N_0\backslash N}{\chi_{\omega}^{-1}(n) f(\gamma_0^{-1} ng,s)\,dn} \\ &= \int_{\GL_1}\int_{F}{\psi^{-1}(ax) |t|^{s}\Phi(t g^{-1}(E_{23} + xE_{13}))\,dx\,dt}.\end{align*}
(We have changed variables $x \mapsto -x$; note that $\chi_\omega(n) = \psi( \langle \omega, n\rangle) = \psi(-ax)$.)  Now, because we are interested in the Fourier expansion of the spherical vector, we will take $g = 1$ at the finite places. Thus, at the finite places, we obtain 
\begin{align*} f_1^\omega(1,s) &= \int_{\GL_1}\int_{F}{|t|^{s} \psi^{-1}(ax)\Phi(t E_{23} + t x E_{13})\,dx\,dt} \\ &= \int_{\GL_1}\int_{F}{|t|^{s-1} \psi^{-1}(ax/t)\Phi(t E_{23} +  x E_{13})\,dx\,dt} \\ &= \sum_{t|a}{|t|^{s-1}}.\end{align*}

\subsubsection{Evaluation at the archimedean place} We now evaluate $f_1^\omega(g,s)$ at the archimedean place.  Suppose $\omega_0 \in W_J$ is rank one, $z \in W_J$, $n = n(z) = \exp( e\otimes z)$, and $g \in H_J$.  Then
\begin{align*} g^{-1}n^{-1} e\otimes \omega_0 &= g^{-1}(e\otimes \omega_0 + \langle \omega_0, z \rangle E_{13}) \\ &= \nu(g)^{-1}( e\otimes \widetilde{g} \omega_0 + \langle \omega_0, z \rangle E_{13}).\end{align*}
Here, recall that $\widetilde{g} = \nu(g) g^{-1}$.  Thus we have 
\[|| g^{-1} n^{-1} e\otimes \omega_0||^2 = |\nu(g)|^{-2}\left( |\langle \widetilde{g} \omega_0, r_0(i)\rangle|^2 + |\langle \omega_0, z \rangle|^2\right)\]
using that $\omega_0$ is rank one.  Additionally, we have
\begin{align*} pr(g^{-1} n^{-1} e\otimes \omega_0) &= -\frac{1}{4} \nu(g)^{-1} \left( \langle \widetilde{g} \omega_0, r_0(-i)\rangle x^2 - 2i \langle \omega_0, z \rangle xy + \langle \widetilde{g} \omega_0, r_0(i)\rangle y^2\right) \\ &= -\frac{1}{4}\nu(g)^{-1}\left( \alpha^* x^2 - 2i \beta xy + \alpha y^2\right)\end{align*}
where $\alpha = \langle \omega_0, g r_0(i)\rangle$ and $\beta= \langle \omega_0, z\rangle.$  Thus we obtain
\begin{align*} f_1^\omega(g,s) &= \frac{|\nu(g)|^{s}}{4^n} \int_{\GL_1}\int_{z }{\psi^{-1}(\langle \omega, z\rangle) |t|^{s+n}\left( \alpha^* x^2 - 2i \beta xy + \alpha y^2\right)^n e^{-\pi t^2(|\alpha|^2 + \beta^2)}\,dz\,dt} \\&= \frac{|\nu(g)|^{s}}{4^n} \int_{\GL_1}\int_{\R }{\psi^{-1}(\lambda \beta) |t|^{s+n}\left( \alpha^* x^2 - 2i \beta xy + \alpha y^2\right)^n e^{-\pi t^2(|\alpha|^2 + \beta^2)}\,d\beta\,dt}\end{align*}
where $\lambda = a$ is the constant satisfying $\langle \omega, z\rangle = \lambda \langle \omega_0, z \rangle = \lambda \beta$.

Hence
\begin{align*} f_1^\omega(g,s) &= \frac{|\nu(g)|^{s}}{4^n}\sum_{k}\binom{n}{k} (xy)^{k}(\alpha^* x^2 + \alpha y^2)^{n-k} (-2i)^{k}\int_{\GL_1}|t|^{s+n} e^{-\pi t^2 |\alpha|^2} \\ &\;\;\; \times \left(\int_{\R}{\beta^k e^{-\pi t^2 \beta^2} e^{-2\pi i \lambda \beta}\,d\beta}\right)\,dt.\end{align*}
Because 
\[t^2 \beta^2 + 2i \lambda \beta = (|t|\beta+ i \lambda/|t|)^2 + \lambda^2/|t|^2,\]
this inner integral is
\[e^{-\pi \lambda^2/|t|^2} \int_{\R}{\beta^k e^{-\pi (|t|\beta+ i \lambda/|t|)^2}\,d\beta}.\]
The $k=0$ term then gives
\begin{align*} \text{$k=0$ term} &= \frac{|\nu(g)|^{s}}{4^n} (\alpha^* x^2 + \alpha y^2)^{n} \int_{\GL_1}{|t|^{s+n-1} e^{-\pi (t^2 |\alpha|^2+\lambda^2/t^2)}\,dt} \\ &= 2 \frac{|\nu(g)|^{s}}{4^n} (\alpha^* x^2 + \alpha y^2)^{n} \left(\frac{|\lambda|}{|\alpha|}\right)^{(s+n-1)/2}K_{(s+n-1)/2}(2\pi |\lambda||\alpha|). \end{align*}
Thus, at $s=n+1$, the coefficient of $x^{2n}$ is
\begin{align*} 2 \frac{|\nu(g)|^{n+1}}{4^n}|\lambda|^{n} \left(\frac{\alpha^*}{|\alpha|}\right)^{n}K_n(2\pi |\lambda||\alpha|) &= 2 \frac{|\nu(g)|^{n+1}}{4^n} |\lambda|^{n} \left(\frac{|\langle 2\pi \omega, g r_0(i)\rangle|}{\langle 2\pi \omega, g r_0(i)\rangle}\right)^{n} K_n(|\langle 2\pi \omega, g r_0(i)\rangle|)\\ &= 2 \frac{|\lambda|^n}{4^n} \Wh_{2\pi \omega}^{n}(g).\end{align*}
Here, recall that $|\lambda||\alpha| = |\langle \omega, g r_0(i)\rangle|$, and we have used that $n$ is even so that we don't have to worry about the sign of $\lambda$.

The coefficient of $x^{n+v}y^{n-v}$ with $v > 0$ comes from various of the terms with $k >0$, and gives a complicated mess of $K$-Bessel functions.  However, by the main result of \cite{pollackQDS}, these $K$-Bessel functions must ultimately combine and simplify, using the various identities among Bessel functions, to a single $K_v(\bullet)$.

\begin{remark} Note that we are using the normalization
\[K_s(y) = \frac{1}{2}\int_{0}^{\infty}{t^{s} e^{-y(t+t^{-1})/2}\,\frac{dt}{t}}.\]
\end{remark}

\subsubsection{Combining finite and archimedean} Combining the above computations of $f_1^\omega(g,s)$ at the finite and archimedean places, we have proved the following.
\begin{proposition} Suppose $g \in H_J(\R)$, $\omega = a e \otimes (0,0,0,1)$, and $\Phi_p$ restricted to $F E_{13} + F E_{23}$ is the characteristic function of $\mathcal{O} E_{13}+\mathcal{O} E_{23} = \Z_p E_{13} \oplus \Z_p E_{23}$ for every finite prime $p$.  Then
\[f_1^\omega(g,s=n+1) = \frac{2 (2n)!}{4^n} \sigma_n(|a|) \mathcal{W}_{2\pi \omega}(g).\]
\end{proposition}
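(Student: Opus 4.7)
The plan is to assemble the finite and archimedean local computations of $f_1^\omega$ that were just carried out. Because the Schwartz-Bruhat function $\Phi = \prod_{p<\infty}\Phi_p \otimes \Phi_{\infty,n}$ is a pure tensor and the adelic integral defining $f_1^\omega$ splits place-by-place, the global Fourier coefficient $f_1^\omega(g,s=n+1)$ factors as a product of the finite-place contributions (evaluated at $g=1$, since one takes $g\in H_J(\R)$) and the archimedean contribution. By the finite-place formula $\sum_{t\mid a}|t|^{s-1}$ obtained from the divisor-counting argument with $\Phi_p$ the characteristic function of $\Z_p E_{13}\oplus\Z_p E_{23}$, the product over all finite primes is $\sigma_n(|a|)$ at $s=n+1$. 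The remaining task is to identify the archimedean factor as a constant multiple of $\Wh_{2\pi\omega}(g)$.

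Rather than work through the higher $k$-terms of the binomial expansion directly---which yield messy sums of $K$-Bessel integrals---I would invoke the refined Fourier expansion of \cite{pollackQDS}. The running assumption $n>\dim J+1$ places $s=n+1$ inside the range of absolute convergence of $E(g,s;n)$, so $E(g,s=n+1;n)$ is a modular form of weight $n$. By the main structure theorem of loc.\ cit., each of its rank-one Fourier coefficients along $N$ is forced to be a scalar multiple of $\Wh_{2\pi\omega}(g)$. Applied to the archimedean factor of $f_1^\omega(g,n+1)$, this says that the archimedean contribution is a scalar multiple $C_\infty\cdot\Wh_{2\pi\omega}(g)$, and I would determine $C_\infty$ by matching the explicitly computed $k=0$ contribution against the $x^{2n}$ coefficient of $\Wh_{2\pi\omega}(g)$, which equals $\Wh^n_{2\pi\omega}(g)/(2n)!$. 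Multiplying $C_\infty$ by the finite factor $\sigma_n(|a|)$ then produces the claimed identity, with the overall constant $\tfrac{2(2n)!}{4^n}$ falling out of the matching.

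The main obstacle, in my view, is not the coefficient matching itself but the invocation of the uniqueness step. One has to confirm that passing to the $\omega$-Fourier coefficient commutes with the Schmid-type differential operator $\mathcal{D}_n$ that encodes modularity, so that a single complex-valued scalar $C_\infty$ really does suffice to pin down the full $\Vm_n^\vee$-valued archimedean integral. Once this commutation---and the consequent one-dimensionality of the rank-one ``generalized Whittaker space'' implied by the main theorem of \cite{pollackQDS}---has been established, the proposition reduces to the bookkeeping described above, without ever needing to simplify the higher-$k$ Bessel integrals by hand.
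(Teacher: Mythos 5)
Your proposal takes essentially the same route as the paper's own proof: the paper likewise evaluates the finite places to get the divisor sum, computes only the $k=0$ archimedean term explicitly, and then invokes the main result of \cite{pollackQDS} (the Eisenstein series being a modular form at $s=n+1$, with rank-one coefficient the Euler product built from $f_1^\omega$) to conclude that the higher-$k$ Bessel terms must assemble into $\Wh_{2\pi\omega}(g)$, fixing the constant $\tfrac{2(2n)!}{4^n}$ by matching the $x^{2n}$-coefficient against $\Wh^n_{2\pi\omega}(g)/(2n)!$. One minor bookkeeping remark: the finite product is $\sigma_n(|a|)/|a|^n$, the missing $|a|^n=|\lambda|^n$ appearing in the archimedean $k=0$ term, but this does not affect the argument or the stated constant.
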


\subsection{The degenerate Heisenberg Eisenstein series} Putting everything together, we have proved the following result.
\begin{corollary}\label{cor:eisConFC1} Suppose that $n > 0$ is even is such that $E(g,\Phi,s;n)$ is a modular form of weight $n$ at $s=n+1$.  Assume moreover that for all $p < \infty$, $\Phi_p$ is such that when restricted to $F E_{13} + F E_{23}$ it is the characteristic function of $\Z_p E_{13} \oplus \Z_p E_{23}$.  Denote by $E_{hol}(g,s,n)$ the Siegel Eisenstein series on $H_J$ defined as
\[E_{hol}(g,s,n) = \sum_{\gamma \in P_{Sieg}(\Q)\backslash H_J(\Q)}{f(\gamma g,n)}\]
with $f(p,s,n) = |\nu(p)||\lambda(p)|^{s}$ for $p \in P_{Sieg}(\A)$ and $f(gk,n) = j(k,i)^{-n}f(g,n)$ for $k \in K_H^1$.  Then for $g \in H_J(\R)$,
\begin{align*} E(n(x) g,\Phi,s=n+1)_0 &= \frac{\zeta(n)\Gamma(n)}{(4\pi)^{n}}\left(E_{hol}(g,n)x^{2n} + E_{hol}'(g,n)y^{2n}\right) \\  &\;\;\; + |\nu(g)|^{n+1}\frac{\zeta(n+1)(-1)^{n/2} \left(\frac{1}{2}\right)_n}{(2\pi)^n} x^n y^n + \sum_{\omega \in W_J(\Q)}{a(\omega) e^{2\pi i \langle \omega, x \rangle} \mathcal{W}_{2 \pi \omega}(g)}\end{align*}
for some coefficients $a(\omega)$.  If $n > \dim(J)+1$ and $\omega = a(0,0,0,1)$ with $a \in \Z$, then $a(\omega) = \frac{2 (2n)!}{4^n} \sigma_n(|a|)$.  \end{corollary}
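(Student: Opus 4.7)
The plan is to assemble the three preceding explicit place-by-place computations according to the abstract decomposition $E(g,s;n)=\sum_{i=0}^{4} E_i(g,s;n)$ from Lemma \ref{lem:EisSum}, so that essentially no new calculation is required.

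For the constant term along $N_0$, I would first eliminate all contributions other than $f_0$ and $E_1^0$ at $s=n+1$. Because the Eisenstein series is by hypothesis a modular form at this point, its constant term transforms under $Z_H(\R)\simeq \GL_1(\R)$ by only the two characters $z^{2n+2}$ and $z^{n+2}$, by Corollary \ref{cor:FwEquiv}. The character table computed for $C(z,Y_i,s=n+1)$ in the previous subsection -- the list $\{2n+2, n+2, 4+\dim C, \dim J + 2 - n, 2\dim J + 2 - 2n\}$ -- shows, via the parity and dimension arithmetic for $\dim J \in \{1,6,9,15,27\}$, that the $E_i^0$ for $i\geq 2$ transform by characters distinct from the two allowed ones and so must vanish at $s=n+1$ (this is Lemma \ref{lem:Econstz}). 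It remains only to insert the two explicit values already computed: the formula for $f_0(g,s=n+1)$ contributes the $x^n y^n$-term verbatim, while $E_1^0(g,s=n+1)=\sum_{\gamma\in P_{Sieg}(\Q)\backslash H_J(\Q)} f_1^0(\gamma g, s=n+1)$, once one inserts $f_1^0(p,s=n+1)=\frac{\zeta(n)\Gamma(n)}{(4\pi)^n}|\nu(p)||\lambda(p)|^n(x^{2n}+y^{2n})$ for $p\in P_{Sieg}(\A)$, unfolds to $\frac{\zeta(n)\Gamma(n)}{(4\pi)^n}\bigl(E_{hol}(g,n)\,x^{2n}+E_{hol}'(g,n)\,y^{2n}\bigr)$. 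The decomposition into the two scalar Eisenstein series is forced by the $K_H^1$-weights $+n$ and $-n$ of $x^{2n}$ and $y^{2n}$, respectively, which select the two inducing sections; agreement on $P_{Sieg}$ together with this equivariance characterizes both summands uniquely.

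For the rank-one Fourier coefficients under the absolute-convergence hypothesis $n>\dim(J)+1$, I would invoke Theorem \ref{thm:FEconvEis}(1), which reduces $E^\omega(g,s=n+1;n)$ to the single Eulerian integral $f_1^\omega(g,s=n+1)$. For $\omega=a\cdot e\otimes(0,0,0,1)$ with $a\in\Z$, the previous subsection's place-by-place computation gives $\sum_{t\mid a}|t|^{s-1}=\sigma_n(|a|)$ from the finite places at $s=n+1$, while the archimedean integral, expanded via the binomial theorem, yields in its $k=0$ term the $x^{2n}$-coefficient $\frac{2}{4^n}\mathcal{W}^n_{2\pi\omega}(g)$.

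The main obstacle is not the $x^{2n}$-coefficient but the remaining $x^{n+v}y^{n-v}$ components, which arise from the $k>0$ terms as an a-priori-unwieldy combination of distinct $K$-Bessel functions $K_{(n-k-1)/2+\cdots}$ and appear hopeless to match directly against the clean $K_v$-expression for $\mathcal{W}^v_{2\pi\omega}$. The cleanest way to collapse them is to notice that $f_1^\omega(g,s=n+1)$ is $K_\infty$-equivariant of the required type and $H_J(\R)$-covariant in the sense of Corollary \ref{cor:FwEquiv}; by the main Fourier-expansion theorem of \cite{pollackQDS}, the space of such vector-valued functions is one-dimensional and spanned by $\mathcal{W}_{2\pi\omega}$. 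Hence $f_1^\omega(g,s=n+1)=C\cdot \mathcal{W}_{2\pi\omega}(g)$ for some scalar $C$, pinned down by the already-computed $x^{2n}$-coefficient to be $C=\tfrac{2(2n)!}{4^n}$ (accounting for the $\frac{1}{(2n)!}$ in the definition of $\mathcal{W}_{2\pi\omega}$). Multiplying by the finite-place factor $\sigma_n(|a|)$ yields $a(\omega)=\frac{2(2n)!}{4^n}\sigma_n(|a|)$, completing the proof.
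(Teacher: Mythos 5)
Your proposal is correct and follows essentially the same route as the paper: it assembles the decomposition of Lemma \ref{lem:EisSum}, kills the $i\geq 2$ constant-term contributions via the $Z_H(\R)$-character argument of Lemma \ref{lem:Econstz}, inserts the explicit computations of $f_0$ and $f_1^0$ (unfolded to the Siegel Eisenstein series $E_{hol}$), and for rank-one coefficients uses Theorem \ref{thm:FEconvEis}(1) plus the $k=0$ archimedean term, letting the main result of \cite{pollackQDS} force the $k>0$ Bessel terms to assemble into $\mathcal{W}_{2\pi\omega}$, exactly as the paper does. The only quibble is bookkeeping: the paper's local finite computation gives $\prod_p\sum_{t\mid a}|t|_p^{n}=\sigma_n(|a|)/|a|^n$ and the archimedean $k=0$ term carries the factor $|\lambda|^n=|a|^n$, so the factor $|a|^n$ sits at the archimedean place rather than being absorbed into the finite places as you wrote, though the product is the same.
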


\section{The minimal modular form}\label{sec:mmf} In this section we prove Theorem \ref{thm:thetaAgain}.  In the first subsection, we prove Proposition \ref{prop:EisReg1}.  In the second subsection, we put together all the pieces to complete the proof of Theorem \ref{thm:thetaAgain}.

\subsection{The value of the Eisenstein series at its special point} As mentioned, the purpose of this section is to spell out a proof of Proposition \ref{prop:EisReg1}, which is essentially contained in \cite{ganATM,ganSW,ganSavin,grossWallach2}. Crucial to our arguments is the following proposition.

\begin{proposition}\label{prop:Mws} Denote by $w_0$ the element of the Weyl group that takes $N$ to its opposite, and by $M(w_0,s)$ the intertwiner which for $Re(s) >> 0$ is given by the absolutely convergent integral
\[M(w_0,s)f(g,s;4) = \int_{N(\R)}{f(w_0^{-1}ng,s;4)\,dn}.\]
Here we are working on $G(\R) = E_{8,4}(\R)$ and $f$ is our special inducing section defined above.  Then $M(w_0,s)f(g,s;4) = h(s) f(g,29-s;4)$ where $h(s)$ is a meromorphic function of $s$ which is regular at $s=5$ and vanishes there.\end{proposition}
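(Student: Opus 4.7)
The plan is to argue by uniqueness that the intertwined section is proportional to $f(\cdot,29-s;4)$, compute the proportionality constant as an archimedean integral, and extract both regularity and the zero at $s=5$ from the resulting product of Gamma factors.

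First, I would establish the identity $M(w_0,s)f(\cdot,s;4) = h(s)\,f(\cdot,29-s;4)$ by multiplicity one. For $\mathrm{Re}(s)$ sufficiently large the intertwining integral converges absolutely, and the result is a smooth, $K_\infty$-equivariant, $\Vm_4^\vee$-valued section of $\mathrm{Ind}_{P(\R)}^{G(\R)}(|\nu|^{29-s})$. Any such section is determined by its value at $g=1$, which must lie in the fixed-vector space $(\Vm_4^\vee)^{K_H}$ where $K_H = K_\infty \cap H_J(\R)$. Because the subgroup $K_H^1 \subseteq K_H$ acts on the basis element $x^{n+v}y^{n-v} \in \Vm_n$ by the character $j(k,i)^v$, this space is at most one-dimensional, spanned by $x^4 y^4$; it is nonzero because $f(\cdot,29-s;4)$ supplies such a section. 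The proportionality follows, and $h(s)$ is meromorphic in $s$ by the standard theory of intertwining operators.

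Second, I would evaluate $h(s)$ by setting $g=1$ and extracting the $x^4 y^4$ component of both sides. The right-hand side produces a single explicit Gamma factor by the computation in equation (\ref{eqn:fn1}) of the paper. The left-hand side is the archimedean integral $\int_{N(\R)}f(w_0^{-1}n,s;4)\,dn$, which can be evaluated either by swapping the $\GL_1$-integration in the definition of $f(\cdot,\Phi_{\infty,4},s)$ with the $N(\R)$-integration and reducing to a $57$-dimensional Gaussian integral weighted by the polynomial $pr(w_0^{-1}n E_{13})^4$ coming from the $K$-type, or by decomposing $w_0$ as a product of simple Weyl reflections and iterating rank-one Knapp--Stein computations. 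Either route yields $h(s)$ as a finite product of ratios of $\Gamma_\R$-factors.

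Third, inspection of the explicit product should show that no denominator factor has a zero at $s=5$ and no numerator factor a pole there, giving regularity, while exactly one numerator factor vanishes at $s=5$, giving the zero. The main obstacle is the archimedean intertwining computation in the second step: although each rank-one reduction is routine, propagating the $K$-type $\Vm_4$ through the chain of reflections and organizing the resulting Gamma factors so that the zero at $s=5$ is transparent requires care. A softer alternative, which suffices for the present proposition, is to avoid computing $h(s)$ in closed form and instead invoke the known reducibility of $\mathrm{Ind}_{P(\R)}^{G(\R)}(|\nu|^{24})$ at the dual point $s=24$, coming from the work of Gan--Savin and Gross--Wallach: the image of $M(w_0,s)$ at $s=5$ must land in the minimal-representation summand, and this constraint forces the scalar $h(s)$ to vanish at $s=5$.
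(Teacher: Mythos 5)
Your first step (multiplicity one) is fine: since $\Vm_4$ occurs in $\mathrm{Ind}_{P(\R)}^{G(\R)}(|\nu|^{29-s})|_{K_\infty}$ with multiplicity $\dim(\Vm_4)^{K_\infty\cap H_J(\R)}=1$, the intertwined section must be a multiple $h(s)f(\cdot,29-s;4)$, and this is also implicit in the paper. But the substance of the proposition is the determination of $h(s)$ near $s=5$, and your second step only names the two possible routes (a $57$-dimensional Gaussian integral, or factorization of $w_0$ into simple reflections with rank-one Knapp--Stein computations) without carrying either out. The second route is exactly what the paper does, and the part you flag as "requiring care" is precisely the content of its proof: the restricted root system is $F_4$, the short-root intertwiners are spherical on groups isogenous to $\SO(9,1)$, while the long-root intertwiners are \emph{not} spherical and must be computed on the three-dimensional space $V_+\subseteq\Vm_4$ spanned by $x^8+y^8$, $x^2y^2(x^4+y^4)$, $x^4y^4$, with an explicit change of basis between the $x,y$- and $f_1,f_2$-bases; only after assembling all fifteen factors does one get $h(s)=Z(s)A(s)$ with the factor $(s-5)$ visible in $A(s)$ and $Z(s)$ harmless at $s=5$. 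So as written the proposal defers, rather than supplies, the proof.

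The "softer alternative" you offer to avoid this computation has a genuine gap and, as stated, does not work. The minimal representation $\pi_{min}$ of $E_{8,4}(\R)$ has minimal $K_\infty$-type $\Vm_4$, and $\Vm_4$ occurs with multiplicity one in $\mathrm{Ind}_{P(\R)}^{G(\R)}(|\nu|^{24})$; hence any constituent of $\mathrm{Ind}(|\nu|^{24})$ isomorphic to $\pi_{min}$ would \emph{contain} the $\Vm_4$-line, so the constraint "the image of $M(w_0,5)$ lands in the minimal-representation piece" is perfectly consistent with $h(5)\neq 0$ and forces nothing. Moreover, at the dominant point $s=24$ the minimal representation arises (via the residue of the Eisenstein series) as a \emph{quotient} of $\mathrm{Ind}(|\nu|^{24})$, not a summand or submodule, so the premise of your constraint is itself unjustified. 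A corrected soft argument would have to go the other way: show that the $\Vm_4$-line of $\mathrm{Ind}(|\nu|^{5})$ lies in the submodule $\mathrm{Im}\,M(w_0,24)\simeq\pi_{min}$ and that $M(w_0,5)$ annihilates this submodule (e.g. via $M(w_0,5)M(w_0,24)=c(24)\,\mathrm{Id}$ together with $c(24)=0$), which requires precise structural input from Gross--Wallach that you have not cited or established; and even then it would say nothing about the regularity of $h$ at $s=5$, which is part of the proposition and which the explicit Gamma-factor formula is needed to verify.
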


We give the proof of the proposition below.  It is via factorization of the intertwining operator and reduction to rational rank one. Denote by $P_0$ the minimal standard parabolic on $G=E_8$, so that $P_0$ defines a root system of type $F_4$.  We first prove the following corollary of Proposition \ref{prop:Mws}.
\begin{corollary}\label{cor:ThetaMF} The Eisenstein series $E(g,s;4)$ is regular at $s=5$ and defines a modular form of weight $4$ at this point.  Moreover, its constant term along $P_0$ consists of just two terms: $f(g,s=5;4)$ and $f_1^0(g,s=5;4)$.\end{corollary}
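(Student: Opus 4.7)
The plan is to deduce Corollary \ref{cor:ThetaMF} from Proposition \ref{prop:Mws} combined with the $Z_H$-equivariance decomposition already used in the proof of Lemma \ref{lem:Econstz}. First, by the standard theory of Eisenstein series, the constant term of $E(g,s;4)$ along the unipotent radical $U_0$ of $P_0$ is a sum
\[E(g,s;4)_{U_0} = \sum_{w \in W_M\backslash W}M(w,s)f(g,s;4),\]
in which each $M(w,s)$ factors as a product of rank-one intertwining operators attached to the positive roots made negative by $w^{-1}$, and $M(w_0,s)$ is the full product over all positive roots. By Gindikin--Karpelevich, each rank-one factor applied to our special section $f$ is an explicit ratio of local Gamma and zeta factors, so the vanishing of $M(w_0, s)$ at $s=5$ asserted by Proposition \ref{prop:Mws} localizes to the vanishing of specific rank-one factors; any Weyl element $w$ whose factorization contains such a factor (and no compensating rank-one pole) contributes zero to the $U_0$-constant term at $s=5$.

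Second, I would combine this with the equivariance decomposition of Lemma \ref{lem:Econstz}. For $G = E_{8,4}$ with $n=4$, $\dim J = 27$, $\dim C = 8$, the five candidate transformation characters of $Z_H$ are
\[z^{2n+2},\; z^{n+2},\; z^{4+\dim C},\; z^{\dim J+2-n},\; z^{2\dim J+2-2n}\; =\; z^{10},\, z^6,\, z^{12},\, z^{25},\, z^{48},\]
which are pairwise distinct. Hence the five pieces $E_i^0(g,s;4)$ of the $N$-constant term are $Z_H$-eigenvectors with distinct eigencharacters and must be regular and/or vanishing at $s=5$ independently. Since the $N$-constant term of a weight $4$ modular form can only transform by $z^{10}$ or $z^6$, the three pieces $E_2^0, E_3^0, E_4^0$ must vanish at $s=5$; the required vanishings are precisely those furnished by the rank-one factor analysis of the first step (with $E_4^0$ controlled directly by Proposition \ref{prop:Mws}). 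This yields regularity of $E(g,s;4)$ at $s=5$ and the weight $4$ modular form property, the $\mathcal{D}_4$-annihilation following from the $K_\infty$-tailored construction of $f$.

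Third, I would identify the two surviving terms in the $U_0$-constant term. The $i=0$ piece is $f(g,s;4)$ itself, which is already $U_0$-invariant and contributes $f(g,s=5;4)$. The $i=1$ piece $E_1^0$ is a Siegel-type Eisenstein series on $H_J=GE_7$ induced from $P_{\mathrm{Sieg}}$; its further constant term along $U_0 \cap H_J$ splits as a sum of Weyl contributions in $H_J$ whose identity coset gives $f_1^0(g,s=5;4)$, and whose remaining cosets correspond to rank-one sub-products of $M(w_0,s)$ that vanish at $s=5$ by the first step. The main obstacle I foresee is the rank-one bookkeeping in that first step: verifying via explicit Gindikin--Karpelevich formulas in the $F_4$-type root system defined by $P_0$ that the zero of $M(w_0,s)$ at $s=5$ is not the result of an accidental Gamma-pole cancellation, and that the specific rank-one factor carrying the zero lies in every non-surviving $M(w,s)$. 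This is a finite computation made tractable by the distinctness of the five $Z_H$-characters, which lets each $E_i^0$ be treated separately rather than requiring a full untangling of the Weyl decomposition.
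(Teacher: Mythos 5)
There is a genuine gap: your plan never establishes that $E(g,s;4)$ is regular at $s=5$, and the mechanism you invoke to kill the unwanted constant-term pieces is circular at exactly this point. For $E_8$ with $n=4$ the point $s=5$ is far outside the range of absolute convergence (one would need $n>\dim J+1=28$), so the termwise analysis underlying Lemma \ref{lem:Econstz} and Proposition \ref{prop:Eisivan} is not available; moreover, Lemma \ref{lem:Econstz} and Corollary \ref{cor:FwEquiv} take as hypotheses precisely what is to be proved, namely that $E(g,s;4)$ is regular at $s=5$ and that its value there is a modular form of weight $4$. Distinctness of the five $Z_H$-characters only separates the pieces once regularity of the whole is known; it does not produce regularity, and the statement ``the constant term of a modular form transforms only by $z^{2n+2}$ or $z^{n+2}$'' cannot be applied before the $\mathcal{D}_4$-annihilation is established. (You also mix up the two constant terms: Lemma \ref{lem:Econstz} concerns the constant term along the Heisenberg unipotent $N$, while the corollary is about the constant term along $P_0$.) The paper obtains regularity by an entirely different route: the global functional equation $E(g,s;4)=c_f(w_0,s)h(s)E(g,29-s;4)$, the vanishing $h(5)=0$ supplied by Proposition \ref{prop:Mws}, the nonvanishing of $E(g,s;4)$ near $s=5$, and Gan's theorem that the pole at $s=24$ has order at most one; this simultaneously identifies $E(g,5;4)$ with a nonzero multiple of $\mathrm{Res}_{s=24}E(g,s;4)$, after which the two-term $P_0$-constant term is quoted from Gan's analysis of that residue in \cite{ganATM} rather than re-derived by rank-one bookkeeping.

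A second gap is the modularity statement: $\mathcal{D}_4$-annihilation does not ``follow from the $K_\infty$-tailored construction of $f$.'' The section is $K_\infty$-equivariant for every $s$ but is killed by $\mathcal{D}_4$ only at the special point; the paper proves this by checking that $\mathcal{D}_4$ annihilates the two surviving constant-term pieces, invoking the identity $\mathcal{D}_4E(g,s;4)=(s-5)E(g,f_s';4)$ from \cite{pollackQDS} and a regularity argument for $E(g,f_s';4)$ to conclude $\mathcal{D}_4E(g,5;4)=0$. Finally, your first step is more delicate than stated: the archimedean section is not spherical (minimal $K$-type $\Vm_4$), so the rank-one factors along the long roots are not scalar Gindikin--Karpelevich ratios but operators on the three-dimensional space $V_{+}$ computed in two different bases related by the matrix $A$; attributing the zero of $M(w_0,s)$ at $s=5$ to specific rank-one factors present in every non-surviving Weyl coset would require carrying out essentially the full constant-term computation along $P_0$, which your proposal defers rather than performs.
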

\begin{proof} First, from the functional equation of Eisenstein series, one obtains 
\begin{equation}\label{eqn:EisFE} E(g,s;4) = c_f(w_0,s) h(s) E(g,29-s;4).\end{equation}
The Eisenstein series $E(g,s;4)$ is spherical at \emph{every} finite place (because the octonions are split at every finite place) and thus the finite part $c_f(w_0;s)$ of the $c$-function can be computed.  In fact, one gets (e.g. \cite{ganATM,ginzburgRallisSoudry})
\[c_f(w_0;s) = \frac{\zeta(2s-29)\zeta(s-28)\zeta(s-23)\zeta(s-19)}{\zeta(2s-28)\zeta(s)\zeta(s-5)\zeta(s-9)}.\]

It is clear that the left-hand side of (\ref{eqn:EisFE}) is nonzero at $s=5$, by examining the contribution of $f(g,s;4)$ to its constant term.  Because $h(5) = 0$, it follows that $E(g,s;4)$ has pole at $s=24$.  Because, as proved in \cite{ganATM}, the order of the pole is at most one at $s=24$, one concludes that $E(g,s;4)$ is regular at $s=5$. More precisely, $E(g,5;4)$ is a nonzero constant times $Res_{s=24}E(g,s;4)$.

Now, as explained in \cite{ganATM}, at most two terms contribute to the constant term along $P_0$ of the residue $Res_{s=24}E(g,s;4)$. It follows that all but two terms in the constant term of $E(g,s;4)$ along $P_0$ vanish at $s=5$.  Since we have already seen two of these terms above, $f(g,s;n)$ and $f_1^0(g,s;n)$, these are the only terms that contribute to the constant term of $E(g,s;4)$ at $s=5$.

It follows immediately from our calculation of $f(g,s=n+1;n)$ and $f_1^0(g,s=n+1;n)$ above and \cite[section 11]{pollackQDS} that $\mathcal{D}_4$ annihilates both $f(g,s=5;4)$ and $f_1^0(g,s=5;4)$.  Thus $\mathcal{D}_4$ annihilates the constant term of the $E(g,s=5;4)$. Because applying $\mathcal{D}_4$ commutes with taking the constant term, $\mathcal{D}_4 E(g,s;4)$ has constant term $0$.  But an easy application of \cite[Theorem 7.3.1]{pollackQDS} yields $\mathcal{D}_4 E(g,s;4) = (s-5) E(g,f_s';4)$ for another $G(\widehat{\Z})$-spherical, $K$-finite flat section $f_s'$.  Thus the constant term of $E(g,f_s';4)$ is regular at $s=5$, and so $E(g,f_s';4)$ is regular, and thus $\mathcal{D}_4 E(g,s=5;4) = 0$.  That is, $E(g,s=5;4)$ is a modular form on $G$ of weight $4$.  This completes the proof of the corollary.\end{proof}

\subsubsection{Archimedean intertwiner} It remains to explain the proof of Proposition \ref{prop:Mws}.

Denote by $\alpha_i$ the simple roots of our $F_4$-root system, so that they are labeled (in order) $1,2,3,4$ when the root diagram is 
\[\circ---\circ==>==\circ---\circ.\]
That is, $1,2$ label the long simple roots and $3,4$ label the short simple roots.  Denote by $w_i$ the simple reflection in the root $\alpha_i$ and $[i_1,i_2,\ldots,i_k]$ for the composition $w_{i_1} w_{i_2} \cdots w_{i_k}$.  The long intertwiner \cite{ganATM} in this notation is $w_0=[1,2,3,2,1,4,3,2,1,3,2,4,3,2,1]$.

To compute $M(w_0,s)$ at the archimedean place, we factorize into the intertwiners for the simple roots $\alpha_i$ and use the cocycle property.  The short roots, fortunately, give spherical intertwiners on groups isogenous to $\SO(9,1)$.  Normalize the inner product $(\cdot, \cdot)$ on the $F_4$ roots spaces so that the long roots have norm squared equal to $2$.  With this normalization, if $\mu$ is a character of $P_0$, the result is that the $c$-function is
\[c(w_{k},\mu) = (\text{nonzero constant}) \frac{\Gamma((\mu, \alpha_{k}))}{\Gamma((\mu,\alpha_{k})+4)}\]
for $k = 3, 4$ corresponding to the short roots.

For the long roots, the intertwiners are no longer spherical, but we must only make a $\GL_2$-computation.  To do this, first denote by $V_{+}$ the three-dimensional subspace of $\Vm_4$ spanned by 
\[b_2^2:=x^8 + y^8, b_2^1 =: = x^2y^2(x^4 + y^4), b_2^0 :=x^4 y^4.\]
As in \cite[section 13]{ganSW}, 
define $f_1 = \frac{x+y}{2}, f_2 = \frac{x-y}{2}$.  Then $V_{+}$ is also the span of 
\[b_1^2:=f_1^8+f_2^8, b_1^1:=f_1^2f_2^2(f_1^4+f_2^4), b_1^0:=f_1^4f_2^4.\]
When we compute $M(w_2,\mu)$, it is convenient to use the first basis $b_2^i$, and when we compute $M(w_1,\mu)$ it is convenient to use the basis $b_1^j$.

More precisely, denote by $[\mu,b_j^k]$ the $K$-equivariant inducing section for $Ind_{P_0}^{G}(\delta_{P_0}^{1/2} \mu)$ whose value at $g=1$ is $b_j^k$.  Then for $i \in \{1,2\}$, $j\in \{1,2\}$ and $k \in \{0,1,2\}$, one has
\[M(w_i,\mu) [\mu,b_j^k] = \frac{\zeta_{\R}(s)}{\zeta_{\R}(s +1)} \frac{\left(\frac{1-s}{2}\right)_k}{\left(\frac{1+s}{2}\right)_k} [w_i(\mu),b_j^k]\]
where $s= \langle \mu, \alpha_i^\vee\rangle$. Here $\zeta_{\R}(s) = \Gamma_{\R}(s) = \pi^{-s/2}\Gamma(s/2)$ and $(s)_k = s(s+1)\cdots (s+k-1)$.

To carry out the computation of $M(w_0,s)$ one then just puts together the above information.  The change of basis matrix between the $b_2^i$'s and the $b_1^i$'s is 
\[A = \left(\begin{array}{ccc} 2&2&1\\56&8&-4 \\ 140 &-20 & 6\end{array}\right)\]
as
\begin{align*} x^8+y^8 &= 2(f_1^8+f_2^8)+2(f_1^6f_2^2+f_1^2f_2^6)+f_1^4f_2^4\\ x^6y^2+x^2y^6 &= 56(f_1^8+f_2^8)+8(f_1^6f_2^2+f_1^2f_2^6)-4f_1^4f_2^4\\ x^4y^4 &= 140(f_1^8+f_2^8)-20(f_1^6f_2^2+f_1^2f_2^6)+6f_1^4f_2^4.\end{align*} 

Putting together the pieces, one gets the following proposition, which immediately implies Proposition \ref{prop:Mws}.
\begin{proposition} Denote by $\lambda_{s} = \delta_{P_0}^{-1/2} |\nu(s)|^{s}$ the normalized character defining $f(g,s;n)$.  Then up to exponential factors, the intertwiner 
\[M(w_0)[\lambda_s,b_2^0] = Z(s) A(s) [\lambda_{29-s},b_2^0]\]
with
\[Z(s) = \left(\frac{\Gamma_{\R}(2s-29)\Gamma_{\R}(s-28)\Gamma_{\R}(s-19)\Gamma_{\R}(s-11)\Gamma_{\R}(s-2)}{\Gamma_{\R}(2s-28)\Gamma_{\R}(s-26)\Gamma_{\R}(s-17)\Gamma_{\R}(s-9)\Gamma_{\R}(s)}\right) \left(\frac{\Gamma_{\C}(s-23)\Gamma_{\C}(s-14)}{\Gamma_{\C}(s-11)\Gamma_{\C}(s-2)}\right)\]
and
\[A(s) = \frac{(s-31)(s-29)(s-22)(s-20)(s-14)(s-12)(s-5)(s-3)}{(s-26)(s-24)(s-17)(s-15)(s-9)(s-7)s(s+2)}.\]
\end{proposition}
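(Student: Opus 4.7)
The proof plan for this proposition is a direct, if lengthy, computation. Using the reduced word $w_0 = [1,2,3,2,1,4,3,2,1,3,2,4,3,2,1]$ recalled above, I would factor $M(w_0)$ as a composition of fifteen rank-one intertwiners via the cocycle property, apply the two explicit formulas given (one for short simple roots, one for long simple roots), and multiply the resulting fifteen terms together. For bookkeeping, for each $k = 1, \ldots, 15$ I would let $\mu_k$ denote the character obtained by applying $w_{i_{k+1}} \cdots w_{i_{15}}$ to $\lambda_s$ and write $s_k = \langle \mu_k, \alpha_{i_k}^\vee\rangle$. Each $s_k$ is affine-linear in $s$, and the list of values of the $s_k$ should reproduce exactly the shifts $s, s-2, s-5, s-9, s-11, s-14, s-19, s-23, s-28, s-31, \ldots$ visible in $Z(s)$ and $A(s)$.

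The six short-root steps ($i_k \in \{3,4\}$) are comparatively easy: each is a spherical intertwiner on a Levi factor isogenous to $\Spin(9,1)$, contributing a Gindikin--Karpelevich factor $\Gamma((\mu_k,\alpha_{i_k}))/\Gamma((\mu_k,\alpha_{i_k})+4)$; after substituting the appropriate $s_k$ and separating Gamma factors into $\Gamma_\R, \Gamma_\C$ via the duplication formula, these produce both $\Gamma_\C$ ratios and several of the $\Gamma_\R$ ratios in $Z(s)$.

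The main work is the nine long-root steps ($i_k \in \{1,2\}$), each of which acts nontrivially on the three-dimensional $K_H$-isotypic piece $V_+ \subseteq \Vm_4$. The key point is that $M(w_2, \mu)$ is diagonal in the basis $\{b_2^0, b_2^1, b_2^2\}$ while $M(w_1, \mu)$ is diagonal in $\{b_1^0, b_1^1, b_1^2\}$, with eigenvalue on the $\ell$-th basis vector equal to
\[
\frac{\zeta_\R(s_k)}{\zeta_\R(s_k+1)} \cdot \frac{((1-s_k)/2)_\ell}{((1+s_k)/2)_\ell}.
\]
I would track the three-component vector step by step through the nine long-root operations, inserting the explicit $3 \times 3$ change-of-basis matrix $A$ (or $A^{-1}$) at each transition between $w_1$- and $w_2$-flavored steps. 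That the fifteen-fold composition sends $b_2^0$ back to a scalar multiple of $b_2^0$ (rather than producing $b_2^1$ or $b_2^2$ components) is \emph{a priori} forced by $K_\infty$-equivariance of $M(w_0)$, since the inducing section is $K_\infty$-equivariant and the image lies in a one-dimensional $K_\infty$-isotypic component; but one should verify this concretely from the matrix product as a consistency check.

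Assembly is then routine: rewrite the Pochhammer symbols as ratios of $\Gamma$ functions, combine the ``$\zeta_\R/\zeta_\R$'' prefactors with the short-root Gindikin--Karpelevich terms to obtain $Z(s)$, and identify what remains as the rational function $A(s)$. The polynomial factors $(s-31),(s-29),(s-22),(s-20),\ldots$ in the numerator and $(s-26),(s-24),(s-17),\ldots$ in the denominator of $A(s)$ will emerge as the $\ell = 2$ Pochhammer symbols surviving after cancellation. Since the statement is only up to exponential factors (powers of $\pi$ from the $\Gamma_\R,\Gamma_\C$ normalizations), these may be absorbed at the very end without further accounting. I expect the principal obstacle to be Step 3: organizing the $3 \times 3$ linear algebra through nine long-root steps interleaved with basis changes, and verifying the collapse onto $b_2^0$ with the correct scalar; everything else should reduce to bookkeeping.
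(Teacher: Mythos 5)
Your plan is essentially the paper's own proof: factor $M(w_0)$ along the reduced word $[1,2,3,2,1,4,3,2,1,3,2,4,3,2,1]$ via the cocycle property, use the Gindikin--Karpelevich-type $\Gamma((\mu,\alpha_k))/\Gamma((\mu,\alpha_k)+4)$ factors for the six short-root steps on the $\SO(9,1)$-type rank-one subgroups, use the $\GL_2$ eigenvalue formula $\frac{\zeta_\R(s_k)}{\zeta_\R(s_k+1)}\frac{((1-s_k)/2)_\ell}{((1+s_k)/2)_\ell}$ for the nine long-root steps diagonalized in the $b_2^\ell$ or $b_1^\ell$ basis as appropriate, interleave with the change-of-basis matrix $A$, and assemble the Pochhammer and Gamma factors into $Z(s)A(s)$. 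One small caution: the collapse onto $b_2^0$ is not forced by $K_\infty$-equivariance in $Ind_{P_0}^G(\delta_{P_0}^{1/2}\lambda_{29-s})$ alone (there the relevant multiplicity space is the three-dimensional $V_+$); the correct soft argument is that $M(w_0)$ carries the Heisenberg-degenerate principal series to the degenerate series at the reflected parameter, where the $\Vm_4$-type has multiplicity one --- or, as you say, one just reads it off from the explicit $3\times 3$ product.
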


\subsection{Proof of Theorem \ref{thm:thetaE8}} At this point, we have shown that $E_J(g,s;4)$ is regular at $s=5$ and defines a modular form there.  By the fact that the local representation $\pi_p \subseteq Ind_{P_J(\Q_p)}^{G_J(\Q_p)}(|\nu|^{5})$ generated by the spherical vector is minimal \cite{ganSavin,magaardSavin}, $E_J(g,s=5;4)$ only has a constant term and rank one Fourier coefficients; all of its rank two, three and four Fourier coefficients are $0$.  Moreover, our computations above show the following. 

Denote by $P_{Sieg}$ the Siegel parabolic subgroup of $H_J$ which by definition is the stabilizer of the line spanned by $(0,0,0,1)$ in $W_J$.  Let $E_{hol}(g,s;n)$ be the Siegel Eisenstein series on $H_J$ defined as
\[E_{hol}(g,s;n) = \sum_{\gamma \in P_{Sieg}(\Q)\backslash H_J(\Q)}{f(\gamma g,n)}\]
with $f(p,s;n) = |\nu(p)||\lambda(p)|^s$ for $p \in P_{Sieg}(\A)$ and $f(gk,s;n) = j(k,i)^{-n}f(g,s;n)$ for $k \in K_H^1$.  The Eisenstein series $E_{hol}(g,s;4)$ is regular at $s=4$.  The value $E_{hol}(g,s=4;4) = 240 |\nu(g)|^{5} \Phi_{Kim}(g)$, i.e., it corresponds to the holomorphic modular form that is the multiple of $H_{Kim}$ with constant term $1$.  Indeed, this is the result of \cite{kimE7}.

Thus for $g \in H_J(\R)$ and $x \in (N/N_0)(\R)\simeq W_J(\R)$
\begin{align*} E(x g,s=5,\Phi;4)_0 &= \frac{\zeta(4)\Gamma(4)}{(4\pi)^{4}}\left(E_{hol}(g,s=4;4)x^{8} + E_{hol}'(g,s=4;4)y^{8}\right) \\ &\;\;\; + |\nu(g)|^{5}\frac{\zeta(5) \left(\frac{1}{2}\right)_4}{(2\pi)^4} x^4 y^4 + \sum_{\omega \in W_J(\Q)}{a(\omega) e^{2\pi i \langle \omega, x \rangle}\mathcal{W}_{2 \pi \omega}(g)}\end{align*}
for some coefficients $a(\omega)$.  

\subsubsection{The nonconstant terms} To finish the proof, we must analyze the nonconstant terms.  We do this by applying Gan's Siegel-Weil theorem \cite{ganSW} for $G_2 \times F_4^{an} \subseteq G_J=E_{8,4}$.  Here recall that $F_4^{an}$ is the anisotropic $F_4$ defined to be the fixator of $1_J$ in the exceptional cubic norm structure $J = H_3(\Theta)$.  We only require the following much weaker form of it.

\begin{theorem}[Gan] \label{thm:EisRegSW} Denote by $\theta(\mathbf{1})(g)$ the theta-lift of the constant function $1$ on $F_4^{an}$ to $G_2$, i.e.,
\[\theta(\mathbf{1})(g) = \int_{[F_4^{an}]}{E((g,h),5;4)\,dh}.\]
Then the difference $E^{G_2}(g,5;4) - \theta(\mathbf{1})(g)$ is a weight $4$, level one \emph{cuspidal} modular form on $G_2$. \end{theorem}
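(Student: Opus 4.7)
The strategy is to verify two things separately: (i) $\theta(\mathbf{1})$ is a weight $4$, level one modular form on $G_2$; and (ii) the constant term of $\theta(\mathbf{1})$ along the Heisenberg parabolic $P_{G_2} = H_{G_2}N_{G_2}$ matches that of $E^{G_2}(g,5;4)$. The conclusion then follows, since the difference is a weight $4$ modular form with vanishing Heisenberg constant term.

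For (i), the $\Q$-anisotropy of $F_4^{an}$ makes $[F_4^{an}]$ compact, so the defining integral converges absolutely and yields a smooth, moderate-growth function; level one follows from the spherical character of $\theta_{Gan}$ at every finite place. For the $K_\infty^{G_2}$-equivariance, note that the embedding $G_2 \times F_4^{an} \hookrightarrow E_8$ sends the $\SU(2)$-factor of $K_\infty^{G_2}$ into the $\SU(2)$-factor of $K_\infty^{E_8}$, and $F_4^{an}(\R)$ into the $L$-factor of $K_\infty^{E_8}$; thus $\Vm_4^{E_8}$ restricts to $\Vm_4^{G_2}\boxtimes \mathbf{1}$ along $K_\infty^{G_2}\times F_4^{an}(\R)$, so the equivariance passes through integration over $[F_4^{an}]$. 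Annihilation by $\mathcal{D}_4^{G_2}$ follows from annihilation of $\theta_{Gan}$ by $\mathcal{D}_4^{E_8}$ together with the compatibility of $\p_{G_2}$ with $\p_{E_8}$ under the embedding: the system of equations in \cite[Theorem 7.3.1]{pollackQDS} for $\theta_{Gan}$ restricts to the corresponding system on $G_2$.

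For (ii), Fubini gives
\[
\theta(\mathbf{1})_{N_{G_2}}(g) = \int_{[F_4^{an}]}(\theta_{Gan})_{N_{G_2}}((g,h))\,dh,
\]
and since $N_{G_2}\subseteq N_{E_8}$, the Fourier expansion of $\theta_{Gan}$ along $N_{E_8}$ from Theorem \ref{thm:thetaAgain} shows that $(\theta_{Gan})_{N_{G_2}}$ equals $\theta_{Gan,00}$ plus the sum of those rank one Fourier coefficient terms whose character $\chi_\omega$ is trivial on $N_{G_2}$. Integrating over $[F_4^{an}]$, the $\Phi_{Kim}$-part of $\theta_{Gan,00}$ becomes a scalar multiple of the holomorphic Eisenstein series $E_{hol}^{G_2}(g,4;4)$ by a Siegel--Weil identity extracted from \cite{ganSW}, and the $\zeta(5)$-term, being $F_4^{an}$-invariant, integrates to a scalar multiple matching the $\zeta(5)$-term in the constant term of $E^{G_2}(g,5;4)$ provided by Corollary \ref{cor:eisConFC1}. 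The rank one Fourier coefficients that survive the $N_{G_2}$-integration, once organized by $F_4^{an}(\Q)$-orbits, reproduce the rank one contributions to the constant term of $E^{G_2}(g,5;4)$.

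The main technical obstacle is aligning the normalizations: the constants implicit in Theorem \ref{thm:thetaAgain} for $\theta_{Gan}$, the local Archimedean sections defining $E^{G_2}(g,s;4)$, and the Siegel--Weil identity of \cite{ganSW} must conspire so that the difference of constant terms is identically zero. Once this matching is in hand, the difference $E^{G_2}(g,5;4) - \theta(\mathbf{1})$ is a weight $4$, level one modular form on $G_2$ with vanishing Heisenberg constant term, hence a cuspidal weight $4$ modular form as claimed.
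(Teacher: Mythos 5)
The paper itself offers no proof of this statement: Theorem \ref{thm:EisRegSW} is quoted from Gan's Siegel--Weil theorem in \cite{ganSW}, which gives the stronger identity $E^{G_2}(g,5;4)=\theta(\mathbf{1})(g)$; the paper merely records the weaker cuspidality form it needs. Your proposal instead tries to prove it internally, and as written it has a fatal circularity. You invoke the Fourier expansion of $\theta_{Gan}$ ``from Theorem \ref{thm:thetaAgain}'', including the precise constants in $\theta_{Gan,00}$ (the $\zeta(5)$-term and the coefficient of $\Phi_{Kim}$), but in this paper those constants (the $\beta_0,\beta_1$ of subsection \ref{subsec:atm}) are determined \emph{by means of} Theorem \ref{thm:EisRegSW}; what is known independently of the present theorem gives $\theta_{Gan,00}$ only up to the undetermined scalars $\beta_0,\beta_1$. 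Consequently the constant-term matching you describe cannot be carried out as stated, and the step you defer --- ``aligning the normalizations'' so that the constant terms agree --- is not a technicality to be checked afterwards: it is essentially the entire analytic content of the Siegel--Weil statement.

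There is a second gap in the final inference. From ``the difference is a weight $4$ modular form with vanishing Heisenberg constant term'' you conclude cuspidality, but cuspidality on $G_2$ requires the constant terms along the unipotent radicals of \emph{both} maximal parabolics to vanish; vanishing along the Heisenberg unipotent radical forces vanishing along the Borel, but says nothing about the constant term along the other (non-Heisenberg) maximal parabolic, which your argument never addresses. This is exactly why the genuinely global input from \cite{ganSW} cannot be bypassed by a Heisenberg-only computation. A smaller but real issue in step (ii): the constant term of $\theta(\mathbf{1})$ along $N_{G_2}$ also receives contributions from the infinitely many rank-one $\omega\neq 0$ with $\chi_\omega$ trivial on $N_{G_2}$, and showing that these, together with $\int_{[F_4^{an}]}\Phi_{Kim}((g,h))\,dh$, reassemble into exactly the $E_{hol}$- and $\zeta(5)$-terms of Corollary \ref{cor:eisConFC1} is a nontrivial computation that the proposal only gestures at. Part (i) of your outline (that $\theta(\mathbf{1})$ is a weight $4$, level one modular form, via compactness of $[F_4^{an}]$, the restriction of $\Vm_4$, and compatibility of $\mathcal{D}_4$ under the dual-pair embedding) is sound and parallels the arguments the paper gives for $\theta^{(2)}$ and $\theta^{(4)}$.
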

Gan's Siegel-Weil theorem proves that the above difference is $0$, and moreover that it is $0$ for a large family of inducing sections.  However, we only require that the difference is cuspidal, and only for this one particular section.  These simplifications make the necessary result easier to prove, which is why we state it in this weaker form.  

Now, to finish the proof of Theorem \ref{thm:thetaAgain}, we must evaluate the rank one Fourier coefficients of $E(g,s=5;4)$.  Because this modular form is spherical, it suffices to evaluate $a_{\theta}(a(0,0,0,1))$ for positive integers $a$.  To do this, we will use Theorem \ref{thm:EisRegSW}, together with the following lemma.

Recall the definitions of $\Omega_I(\omega_0)$ and $\Omega_E(\omega_0)$ from subsection \ref{subsec:integralG2}.
\begin{lemma}\label{lem:a1sEq} Suppose $\omega_0 \in W_{F}$ is rank one.  Then $\Omega_{I}(\omega_0)$ and $\Omega_{E}(\omega_0)$ are each singletons, consisting of the elements $(a,b I, c I^\#,d)$ and $(a, b E, c E^\#, d)$, respectively.\end{lemma}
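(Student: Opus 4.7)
The plan is to reduce each case to showing that an auxiliary element $Y \in J$ must vanish, using the positive definiteness of the trace form on $J = H_3(\Theta)$. The degenerate subcase $a_0 = 0$ of $\omega_0 = (a_0, b_0, c_0, d_0)$ is straightforward: the rank-one relations on $\omega_0 \in W_F$ force $b_0 = c_0 = 0$, so $\omega_0 = (0, 0, 0, d_0)$, and then any rank-one preimage $(0, b, c, d_0) \in W_J$ has $b^\# = 0 = c^\#$ and pairs trivially with $I^\#, I$ (resp.\ $E^\#, E$); positive definiteness, via a simple form of the reduction below, then forces $b = c = 0$. The rest of the proof treats $a_0 \neq 0$.

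Rescale so that $a_0 = 1$; rank-one of $\omega_0$ then gives $\omega_0 = (1, b_0, b_0^2, b_0^3)$. Rank-one elements of $W_J$ with first coordinate $1$ are parametrized by $X \in J$ via $X \mapsto (1, X, X^\#, N(X))$. In the $I$-case, the projection conditions become $\tr(X) = 3 b_0$, $\tr(X^\#) = 3 b_0^2$, $N(X) = b_0^3$, i.e., the characteristic polynomial of $X$ in the cubic Jordan algebra $J$ is $(t - b_0)^3$. By Cayley--Hamilton, $(X - b_0 I)^3 = 0$, so $X - b_0 I$ is nilpotent in $J$; positive definiteness of the trace form on $J(\R)$ forces $X = b_0 I$, yielding the unique preimage $(1, b_0 I, b_0^2 I^\#, b_0^3)$.

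For the $E$-case, decompose $X = b_0 E + Y$ with $(Y, E^\#) = 0$. Expanding $X^\# = b_0^2 E^\# + b_0 (E \times Y) + Y^\#$ and using $E \times E = 2 E^\#$, $(E^\#, E) = 3 N(E) = 3$, and $(E \times Y, E) = (E \times E, Y) = 2(E^\#, Y) = 0$, the condition $(X^\#, E) = 3 b_0^2$ reduces to $(Y^\#, E) = 0$. The cubic polarization
\[ N(b_0 E + Y) = b_0^3 N(E) + b_0^2 (E^\#, Y) + b_0 (E, Y^\#) + N(Y), \]
together with $N(X) = b_0^3$, then forces $N(Y) = 0$. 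The three conditions $(E^\#, Y) = (Y^\#, E) = N(Y) = 0$ express precisely that the cubic polynomial $t \mapsto N(E + tY)$ is identically equal to $N(E) = 1$, i.e., the affine line $\{E + tY : t \in \R\}$ lies entirely in the norm-one hypersurface.

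The main obstacle is to deduce $Y = 0$ in the $E$-case. My plan is to use that $E \in J(\R)$ lies in the positive cone of the formally real Jordan algebra --- its Jordan spectral polynomial factors as $t^3 - 6t^2 + 6t - 1 = (t-1)(t^2 - 5t + 1)$, whose roots $1$ and $(5 \pm \sqrt{21})/2$ are all positive --- together with the classical fact that the identity component of the real structure group of $J$ (a real form of $E_6$) acts transitively on the intersection of the positive cone with $\{N = 1\}$, both being $26$-dimensional homogeneous spaces with compact stabilizer $F_4^{an}(\R)$. Choosing $g$ in this group with $g \cdot E = I$ sends $Y$ to $Y_0 := g \cdot Y \in J(\R)$ satisfying $N(I + tY_0) \equiv 1$, which reduces to the $I$-case over $\R$ and forces $Y_0 = 0$, hence $Y = 0$.
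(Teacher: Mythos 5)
Your proof is correct, and its skeleton agrees with the paper's: parametrize rank-one elements of $W_J$ with nonzero first coordinate as $(1,X,X^\#,N(X))$ and reduce the fiber condition to the three vanishing statements $(Y,E^\#)=(Y^\#,E)=N(Y)=0$ (equivalently $N(E+tY)\equiv N(E)$), which must force $Y=0$. The differences are in how the two ends are handled. The paper first normalizes $\omega_0=(1,0,0,0)$ ``by equivariance and scaling'' (using the $\SL_2$ attached to the unital embeddings $F\to J$ via $I$, resp.\ $E$, which commutes with the relevant projection) and then simply \emph{asserts} that $(X,E^\#)=(X^\#,E)=N(X)=0$ forces $X=0$; you instead keep $\omega_0=(1,b_0,b_0^2,b_0^3)$ general, substitute $X=b_0E+Y$ and polarize, which removes the need to justify the equivariance, and then you actually prove the key vanishing: $E$ is positive definite with $N(E)=1$ (spectral polynomial $(t-1)(t^2-5t+1)$), the identity component of the norm-preserving structure group $E_{6(-26)}$ acts transitively on the positive norm-one cone with stabilizer the compact $F_4$, so one may transport $E$ to $I$, where $\tr(Y_0)=\tr(Y_0^\#)=N(Y_0)=0$ makes $Y_0$ nilpotent, hence zero since the trace form is positive definite. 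So your write-up supplies exactly the content the paper leaves implicit, at the cost of quoting the symmetric-space transitivity; a more self-contained alternative would be to note that positivity of $E$ alone already gives $(Y,E^\#)=0\Rightarrow$ (after diagonalizing $E$ by the compact $F_4$ and rescaling coordinates) the same trace-form argument. One small slip in your degenerate case $a_0=0$: rank one of $(0,b,c,d_0)$ gives $b^\#=0$ but only $c^\#=bd_0$, not $c^\#=0$ outright; you first conclude $b=0$ from $(b,E^\#)=(b^\#,E)=N(b)=0$, and \emph{then} $c^\#=0$, after which the dual statement (with $E^\#$ in place of $E$, using $(E^\#)^\#=E$ and $N(E^\#)=1$) kills $c$. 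This ordering issue is harmless and the argument goes through.
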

\begin{proof} By equivariance and scaling, we may suppose that $\omega_0 = (1,0,0,0)$.  But then, 
\[\Omega_{E}((1,0,0,0)) = \{(1,X,X^\#,N(X)): (X,E^\#) = (X^\#,E) = N(X) = 0\}.\]
However, the only such $X$ is $0$, and similarly with $I$ in place of $E$.  This proves the lemma.\end{proof}

Because modular forms that are cusp forms only have rank four coefficients, it follows from Lemma \ref{lem:a1sEq} that the rank one Fourier coefficients of $E_J(g,s=5;4)$ are equal to the rank one Fourier coefficients of the similar Eisenstein series on $G_2$.  But these coefficients were computed in Corollary \ref{cor:eisConFC1}.  This completes the proof of Theorem \ref{thm:thetaAgain}.

\bibliography{QDS_Bib} 
\bibliographystyle{amsalpha}
\end{document}